\documentclass[12pt]{amsart}

\oddsidemargin -.5cm
\evensidemargin -.5cm
\textwidth 17.45cm

\usepackage[latin1]{inputenc}
\usepackage[all]{xy}
\usepackage{amsmath}
\usepackage{amssymb}
\usepackage{amsthm}
\usepackage{pdfsync}
\usepackage[linktocpage=true]{hyperref}
\usepackage{youngtab}

\newcommand{\coef}{\operatorname{coeff}}

\newtheorem*{proposition*}{Proposition}
\newenvironment{introprop}[1]{
	\\[.5\baselineskip]
	\noindent
	\textbf{Proposition #1.}\em
}{\\[0.25\baselineskip]}

\newenvironment{introtheorem}[1]{\hphantom{}
	\\[.5\baselineskip]
	\noindent
	\textbf{Theorem #1.}\em
}{\\[0.5\baselineskip]}

\newcommand{\stb}{,\ldots,}
\newcommand{\then}{\Rightarrow}

\makeatletter
\DeclareRobustCommand{\bigtimes}{%
	\mathop{\vphantom{\sum}\mathpalette\@bigtimes\relax}\slimits@}
\newcommand{\@bigtimes}[2]{\vcenter{\hbox{\make@bigtimes{#1}}}}
\newcommand{\make@bigtimes}[1]{%
	\sbox\z@{$\m@th#1\sum$}%
	\setlength{\unitlength}{\wd\z@}%
	\begin{picture}(1,1)
	\linethickness{.15ex}
	\Line(0.2,0.2)(0.8,0.8)
	\Line(0.2,0.8)(0.8,0.2)
	\end{picture}}


\newcommand{\GL}{\operatorname{GL}}
\newcommand{\Sp}{\operatorname{Sp}}
\newcommand{\SO}{\operatorname{SO}}

\newcommand{\U}{\operatorname{U}}

\newcommand{\Fl}{\operatorname{Fl}}
\newcommand{\fd}{\operatorname{fd}}

\newcommand{\Aut}{\operatorname{Aut}}
\newcommand{\Rep}{\operatorname{Rep}}
\newcommand{\rk}{\operatorname{rk}}
\newcommand{\Hom}{\operatorname{Hom}}

\newcommand{\iso}{\cong}

\newcommand{\im}{\operatorname{Im}}
\newcommand{\acsa}{\qquad \iff\qquad}
\newcommand{\id}{\operatorname{id}}
\newcommand{\OSP}{\operatorname{OSP}}
\newcommand{\bra}{\langle}
\newcommand{\braaa}{[\![}
\newcommand{\ket}{\rangle}
\newcommand{\kettt}{]\!]}
\newcommand*\conj[1]{\overline{#1}}


\newcommand{\codim}{\operatorname{codim}}

\newcommand{\Sq}{\operatorname{Sq}}
\newcommand{\inj}{\hookrightarrow}
\newcommand{\RP}{\mathbb{R}P}
\newcommand{\CP}{\mathbb{C}P}
\newcommand{\HP}{\mathbb{H}P}
\newcommand{\OP}{\mathbb{O}P}
\renewcommand{\O}{\mathbb{O}}

\newcommand{\PP}{\mathbb{P}}
\newcommand{\Gr}{\operatorname{Gr}}

\newcommand{\al}{\alpha}

\newcommand{\ga}{\gamma}
\newcommand{\Ga}{\Gamma}
\newcommand{\De}{\Delta}

\newcommand{\ka}{\kappa}
\newcommand{\la}{\lambda}

\newcommand{\si}{\sigma}
\newcommand{\Si}{\Sigma}
\newcommand{\Om}{\Omega}



\newcommand{\se}{\subseteq}
\newcommand{\su}{\backslash}

\newcommand{\Z}{\mathbb{Z}}
\newcommand{\R}{\mathbb{R}}
\newcommand{\C}{\mathbb{C}}
\newcommand{\N}{\mathbb{N}}
\newcommand{\Q}{\mathbb{Q}}
\newcommand{\F}{\mathbb{F}}

\newcommand{\D}{\mathcal{D}}


\newcommand{\hh}{\mathfrak{h}}
\newcommand{\HH}{\mathbb{H}}


\newtheorem{fact}{Fact}[section]
\newtheorem{lemma}[fact]{Lemma}
\newtheorem{theorem}[fact]{Theorem}
\newtheorem*{theorem*}{Theorem}
\newtheorem{defi}[fact]{Definition}
\newtheorem{exa}[fact]{Example}
\newtheorem{cla}[fact]{Claim}
\newtheorem*{sol}{\it Solution}
\newtheorem{rremark}[fact]{Remark}
\newtheorem{proposition}[fact]{Proposition}
\newtheorem{corollary}[fact]{Corollary}
\newtheorem{problem}{Problem}

\newenvironment{remark}{\begin{rremark} \rm}{\end{rremark}}
\newenvironment{definition}{\begin{defi} \rm}{\end{defi}}
\newenvironment{example}{\begin{exa} \rm}{\end{exa}}

\newtheorem{observation}[fact]{Observation}

\author{L\'aszl\'o M.\ Feh\'er}
\address{E\"otv\"os Lor\'and University, Budapest, Hungary}
\email{lfeher@cs.elte.hu}

\author{\'Akos K.\ Matszangosz}
\address{Alfr\'ed R\'enyi Institute of Mathematics, Budapest, Hungary}
\email{matszangosz.akos@gmail.com}

\subjclass[2010]{14N10, 55N91 (primary), 14M15, 14P25, 57N80, 57R91 (secondary)}


\keywords{conjugation spaces, equivariant cohomology, circle actions, real flag manifolds, real enumerative geometry}
\thanks{ L.\ M.\ F. was  partially supported by NKFI 112703 and 112735 as well as ERC Advanced Grant LTDBud and enjoyed the hospitality of the R\'enyi Institute. \\
A.\ K.\ M.\ is supported by the Hungarian National Research, Development and Innovation Office, NKFIH K 119934.}

\title{Halving spaces and lower bounds in real enumerative geometry}
\begin{document}
	\maketitle
\begin{abstract} We develop the theory of \emph{halving spaces} to obtain lower bounds in real enumerative geometry. Halving spaces are topological spaces with an action of a Lie group $\Ga$ with additional cohomological properties. For $\Ga=\Z_2$ we recover the conjugation spaces of Hausmann, Holm and Puppe. For $\Ga=\U(1)$ we obtain the \emph{circle spaces}. We show that real even and quaternionic partial flag manifolds are circle spaces leading to non-trivial lower bounds for even real and quaternionic Schubert problems. To prove that a given space is a halving space, we generalize results of Borel and Haefliger on the cohomology classes of real subvarieties and their complexifications. The novelty is that we are able to obtain results in rational cohomology instead of modulo 2. The equivariant extension of the theory of circle spaces leads to generalizations of the results of Borel and Haefliger on Thom polynomials.
\end{abstract}

	\tableofcontents
\section{Introduction}\label{sec:intro}

The answer to an enumerative geometry problem over $\C$ is a natural number. In the case of Schubert calculus, this number is completely determined by the rational cohomology ring of the Grassmannian.

For an enumerative geometry problem over $\R$, the space of generic configurations is no longer connected, so the answer will be a finite list of natural numbers. The number of solutions for  the corresponding complex problem is an obvious upper bound for the number of real solutions. Moreover, modulo two the number of real solutions should be congruent to the number of complex solutions (since the non-real solutions come in complex conjugate pairs).

Lower bounds are more difficult to obtain, for example see \cite{Welschinger2005}, \cite{SoprunovaSottile2006}. Several authors noticed (e.g. \cite{finashin_abundance_2012}, \cite{OkonekTeleman2014signs}) that for some problems a cohomological calculation is available. For example consider the following:
\begin{problem} How many 4-dimensional linear subspaces of $\R^8$ intersect four given generic 4-dimensional linear subspaces in 2 dimensions?	
\end{problem}
The number of solutions for the complex problem is the integral
\[ \int_{\Gr_4(\C^8)}[\si_{(2,2)}]^4,\]
where $\si_{(2,2)}$ is the Schubert variety corresponding to the partition $(2,2)$, and $[\si_{(2,2)}]\in H^8(\Gr_4(\C^8))$ is the cohomology class represented by it. According to the Schubert calculus the answer is 6, which is an upper bound for the number of solutions for the real problem.  The key to find a lower bound is the observation that there is a degree halving isomorphism $\kappa$ between the rational cohomology ring of the real Grassmannian $\Gr_{4}(\R^{8})$ and the rational cohomology ring of the complex Grassmannian $\Gr_{2}(\C^{4})$. Indeed, the first ring is generated by the Pontryagin classes of the tautological subbundle, the second is generated by the Chern classes of the tautological subbundle, and the assignment $p_i\mapsto c_i$ extends to an isomorphism. Moreover, the real Schubert variety $\si^\R_{(2,2)}$ is a cycle (it represents a cohomology class in $\Gr_{4}(\R^{8})$), and $\kappa[\si^\R_{(2,2)}]=[\si_{(1,1)}]$. This implies that
\[ \int_{\Gr_4(\R^8)}[\si^\R_{(2,2)}]^4=\int_{\Gr_2(\C^4)}[\si_{(1,1)}]^4=2.\]
In the absence of a complex structure on $\Gr_4(\R^8)$ this result means only that the \emph{signed sum} of the solutions is 2, therefore 2 is a lower bound to the number of solutions.

Our main goal here is to study how general this situation is. It turns out that even Grassmannians $\Gr_{2k}(\R^{2n})$---and more generally even partial flag manifolds---possess a structure we will call \emph{circle space}, which allows us to get lower bounds for a large family of real enumerative problems. Circle spaces are closely related to conjugation spaces, which were introduced  by Hausmann, Holm and Puppe \cite{HausmannHolmPuppe2005} to understand the topological background of the theory of Borel and Haefliger \cite{BorelHaefliger1961}.

The object of study in \cite{BorelHaefliger1961} is a nonsingular projective variety $X$ over $\C$, which is the complexification of its real part. Let $\Ga=\Z_2$ act on $X$ by complex conjugation and let $X^\Ga$ denote the real part (the $\Ga$-fixed point set). They show in Proposition 5.15 of \cite{BorelHaefliger1961} that assuming that all $x\in H^*(X;\F_2)$ and $y\in H^*(X^\Ga;\F_2)$ can be represented by algebraic cycles defined over $\R$ and real algebraic cycles respectively, we have a degree halving isomorphism $\kappa$ between the mod 2 cohomology of $X$ and the mod 2 cohomology of the real part $X^\Ga$. This isomorphism has the property $\kappa[Z]=[Z^\Ga]$ for any algebraic cycle $Z$,  which  is the complexification of its real part $Z^\Ga$.

Conjugation spaces are spaces with a $\Gamma=\Z_2$-action, satisfying a certain condition in mod 2 cohomology, called the \emph{restriction equation}. The nonsingular projective varieties of above with the conjugation action are the main examples of conjugation spaces.

In this paper we extend the theory of conjugation spaces to \emph{halving spaces} (Definition \ref{def:halvingspace}), where the acting group $\Gamma$ is allowed to be $\U(1)$ or $\Sp(1)$, and the conjugation equation becomes the analogous condition in rational cohomology. The existence of a degree halving ring isomorphism $\kappa$ between the rational cohomology ring of the fixed point set $X^\Ga$ and of the halving space $X$ can be established analogously to the case of conjugation spaces (Theorem \ref{thm:multiplicativity}). Our main tool to find examples of halving spaces is the \emph{generalized Borel-Haefliger theorem} (Theorem \ref{thm:BHI}), which has the following special case:
\begin{introtheorem}{\ref{thm:BHI}}
	Let $\Ga=\U(1)$ and let $X$ be a compact orientable $\Ga$-manifold, whose rational cohomology groups are additively generated by good $\U(1)$-invariant cycles $[Z_i]$ (Definition \ref{def:goodcycle}), such that $\codim_\R Z_i=2\codim_\R Z_i^\Ga$. Assume that $X^\Ga$ is connected. Then $X$ is a halving space, and the assignment sending $[Z_i]$ to $[Z_i^\Ga]$ determines a degree-halving multiplicative isomorphism between $H^{2*}(X;\Q)$ and $H^*(X^\Ga;\Q)$.
\end{introtheorem}
The proof of Theorem \ref{thm:BHI} is an adaptation of Van Hamel's work \cite{VanHamel2007}. Using Theorem \ref{thm:BHI} we obtain
\begin{introtheorem}{\ref{thm:doubleflagcirclespace}} Let $\Ga=\U(1)$ and let $\Ga$ act on $\Fl_{2\D}(\R^{2n})$ obtained by identifying $\C^n$ with $\R^{2n}$. With this action it is a circle space, with fixed point set $\Fl_\D(\C^n)$. Furthermore, the degree-halving ring isomorphism $\ka$ associated to the circle space structure satisfies:
$$\ka[\si_{DI}^\R]=2^{|I|}[\si_I^\C],$$
where $[\si_I^\C]\in H^{2|I|}(\Fl_\D(\C^N))$.
\end{introtheorem}
and
\begin{introtheorem}{\ref{thm:quatflagcirclespace}}
	With the $\Ga=\U(1)$-action defined by inner automorphisms (see Section \ref{subsec:Galois}), $\Fl_\D(\HH^n)$ is a circle space, with fixed point set $\Fl_\D(\C^n)$. Furthermore, the degree-halving ring isomorphism $\ka$ associated to the circle space structure satisfies:
	$$ \ka[\si_I^\HH]=2^{|I|}[\si_I^\C],$$
	where $[\si_I^\C]\in H^{2|I|}(\Fl_\D(\C^n))$.
\end{introtheorem}
 Using products and connected sums we can create further examples.  Halving spaces with $\Ga=\Sp(1)$ are less common, our examples are the octonionic flag manifolds (Theorem \ref{thm:octoflagcirclespace}).

Circle spaces allow us to study various real (and quaternionic) enumerative problems (the \emph{double Schubert problems}, see Section \ref{sec:applications}) and finding lower bounds for them. A different example is
\begin{introprop}{\ref{32}}
	Given four generic linear maps $\alpha_i:\R^8\to\R^8$  the number of 4-dimensional subspaces $V$ satisfying $\dim(V\cap\alpha_i(V))=2$ for $i=1,\dots,4$ is at least 32.
\end{introprop}
This example is connected with quivers and leads naturally to an equivariant variation of the theory, giving a partial analogue (Theorem \ref{thm:realquiver}) of Borel and Haefliger's result on mod 2 Thom polynomials \cite[Theorem 6.2.]{BorelHaefliger1961}.
\bigskip

\textbf{Structure of the paper.} In Section \ref{sec:halving}, we generalize the definition of conjugation spaces \cite{HausmannHolmPuppe2005} to $\U(1)$ and $\Sp(1)$-actions and give the first example of a circle space.

Section \ref{sec:geometry} contains the key technical results. We introduce the notion of a halving cycle and show that if $Z\subset X$ is a  halving cycle, then $\kappa[Z]=\mu_Z [Z^\Ga]$, where $\mu_Z\in \N$ is the \emph{excess multiplicity} of $Z$. Our main tool is the excess intersection formula of Quillen.

The main result of Section \ref{sec:BHI} is the  generalized Borel-Haefliger theorem (Theorem \ref{thm:BHI}): we give a sufficient condition (existence of a basis in cohomology, represented by halving cycles) for a $\Ga$-manifold to be a halving space.

We give examples of halving spaces in Section \ref{sec:examples} using  the  generalized Borel-Haefliger theorem. We give a description of the Schubert calculus of the even real and the quaternionic flag manifolds.

In Section \ref{sec:applications} we give some applications of these results in real and quaternionic enumerative geometry using Schubert calculus.

In Section \ref{sec:further} we study a quiver type degeneracy locus and  related enumerative problems. In conclusion we mention an equivariant extension the generalized Borel-Haefliger theorem.

\textbf{Acknowledgment.} We are grateful to Bal\'azs Csik\'os, Matthias Franz, Tara Holm, Liviu Mare, Rich\'ard Rim\'anyi, Endre Szab\'o and Andrzej Weber for several helpful discussions on the subject of this paper.

\section{Halving spaces}\label{sec:halving}
In this section we extend the definition of conjugation spaces \cite{HausmannHolmPuppe2005} to $\Ga=\U(1)$ and $\Sp(1)$-actions and cohomology with $\Z$ or $\Q$-coefficients. Most of the proofs of Hausmann, Holm and Puppe \cite{HausmannHolmPuppe2005} generalize word by word to halving spaces, we give a discussion for the sake of completeness.
\subsection{Preliminaries}
\begin{definition}\index{halving pair}
	Let $\Ga$ be a Lie group, and $R$ be a ring. We say that $(\Ga,R)$ is a \emph{halving pair}\label{word:halvingpair}, if $H^*(B\Ga;R)\iso R[u]$ with $u\in H^D(B\Ga;R)$, for some $D\in\N_+$.
\end{definition}
\begin{example}[See also Steenrod's polynomial realization problem \cite{Steenrod1961}]
	\begin{itemize}
		\item 	$(\Z_2,\F_2)$ is a halving pair with $D=1$, since $H^*(B\Z_2;\F_2)\iso \F_2[u]$ for $u=w_1(S)$ where $S\to \RP^\infty$ is the tautological bundle.
		\item $(\U(1),\Z)$ is a halving pair with $D=2$, since $H^*(B\U(1);\Z)\iso \Z[u]$ for $u=c_1(S)$ where $S\to \CP^\infty$ is the tautological bundle.
		\item $(\Sp(1),\Z)$ is a halving pair with $D=4$, since $H^*(B\Sp(1);\Z)\iso \Z[u]$, for $u=q_1(S)$ where $S\to \HP^\infty$ is the tautological bundle and $q_i$ denotes the quaternionic Pontryagin class.
	\end{itemize}
\end{example}
Throughout the paper, we will always take cohomology with coefficients $R$ corresponding to the group action $\Ga$ for each halving pair $$(\Ga,R)\in \{(\Z_2,\F_2),(\U(1),\Q), (\Sp(1),\Q)\},$$
 and $u\in H^D(B\Ga;R)$ is always defined as above. To simplify the discussion (e.g.\ for localization theorems) we work with field coefficients, although some of the discussion generalizes to $R=\Z$. If the coefficient ring $R$ is clear from the context, then we drop it from the notation, i.e. $H^*(X)=H^*(X;R)$.

Let $X$ be a $\Ga$-space, i.e.\ an action of $\Ga$ is given on $X$, and let us fix the coefficient ring $R$. Let $H_\Ga^*(X;R)=H^*(B_\Ga X;R)$ denote the \emph{$\Ga$-equivariant cohomology of $X$}, where $B_\Ga X=E\Ga\times_\Ga X$ is the Borel construction; let $\rho:H_\Ga^*(X;R)\to H^*(X;R)$ denote the forgetful map. A graded $R$-module homomorphism $\si:H^*(X)\to H_\Ga^*(X)$ is called a \emph{Leray-Hirsch section}, if $\rho\si=\id$. We call $\si$ a Leray-Hirsch section, since its existence implies that the condition of the Leray-Hirsch theorem is satisfied for the fiber bundle $X\to E\Ga\times_\Ga X\to B\Ga$.

We have a restriction map $r:H_\Ga^*(X)\to H^*(X^\Ga)[u]$, where we use that the $\Ga$-action on $X^\Ga$ is trivial, so $H^*_\Ga(X^\Ga)= H^*(X^\Ga)[u]$.
\subsection{The definition of halving spaces}
By adapting Hausmann, Holm and Puppe's definition of conjugation spaces \cite{HausmannHolmPuppe2005} to halving pairs, we obtain the central definition of this paper:
\begin{definition}\label{def:halvingspace}
	Let $X$ be a $\Ga$-space, and denote by $X^\Ga$ its fixed point set. Assume $X$ has nonzero cohomology only in $2Di$ degrees, and that there exists a Leray-Hirsch section $\si:H^*(X)\to H_\Ga^*(X)$ satisfying the following \emph{degree condition}\index{degree condition!DC}\label{word:degreecondition}:
	\begin{itemize}
		\item[(DC)]: $r(\si(x))$ is a polynomial in $u$ of degree \emph{exactly} $i$ for all $x\in H^{2Di}(X)$.
	\end{itemize}
	A $\Ga$-space $X$ satisfying these conditions is called a \emph{halving space}\label{word:halvingspace}.
\end{definition}
Let us unravel this definition. The halving space structure involves the following maps, which will be used in the following:
\begin{displaymath}
\xymatrix{			
	H_\Ga^*(X)\ar@{->}[r]^-{r}\ar@{->}[d]^{\rho}& H^*(X^\Ga)[u] \ar@{->}[d]\\
	H^*(X)\ar@{->}[r] \ar@/^1pc/{-}^{\si}&H^*(X^\Ga)}
\end{displaymath}

Let
\begin{equation}\label{eq:kadef}
\ka:H^{2*}(X)\to H^*(X^\Ga)
\end{equation} be the degree halving $R$-module homomorphism $\ka(x):=\coef(r\si(x),u^i)$ for $x\in H^{2Di}(X)$. The pair $(\ka,\si)$ is called a \emph{cohomology frame}\label{word:cohframe}.

With this notation, the degree condition (DC) means that $\ka$ is injective and that for $x\in H^{2Di}(X)$
\begin{equation}\label{eq:restrictioneqn}
r\si(x)=\ka(x)u^i+\la_1 u^{i-1}+\ldots +\la_{i-1}u+\la_i,
\end{equation}
where $\la_j\in H^{D(i+j)}(X^\Ga)$. We call equation \eqref{eq:restrictioneqn} \emph{restriction equation}\index{restriction equation}, it is called \emph{conjugation equation} in \cite{HausmannHolmPuppe2005}. For $(\Ga,R)=(\Z_2,\F_2)$, the definition of halving spaces is the same as the definition of conjugation spaces in \cite{HausmannHolmPuppe2005}, except that we don't require $\ka$ to be surjective.\\

It would be more precise to call a halving space a $(\Ga,R)$-halving space, however when $(\Ga,R)$ is fixed, we simply say $X$ is a halving space. We consider halving spaces for the following halving pairs $(\Ga,R)$:
\begin{itemize}
	\item Hausmann-Holm-Puppe's conjugation spaces\index{conjugation space} \cite{HausmannHolmPuppe2005}\label{word:conjugationspace} for the halving pair $(\Z_2,\F_2)$.
	
	\item \emph{Circle spaces}\index{circle space}\label{word:circlespace} for the halving pair $(\U(1),\Q)$. This is the main case we will consider.
	
	\item \emph{Quaternionic halving spaces} for the halving pair $(\Sp(1),\Q)$.\index{quaternionic halving space}\label{word:quathalvingspace}
\end{itemize}	

\subsection{Main properties}
To motivate the following discussion, we list some of the nice properties of halving spaces:
\begin{itemize}
	\item $\ka$ is a degree-halving ring homomorphism,
	\item $\si$ is a ring homomorphism, therefore the Leray-Hirsch isomorphism induced by $\si$ is a $H_\Ga^*=H^*(B\Ga;R)$-algebra isomorphism,
	\item the cohomology frame $(\ka,\si)$ is unique.
\end{itemize}

The proof of these properties relies on the following lemma, which is implicitly used in \cite{HausmannHolmPuppe2005}, its proof is the same, we repeat it for the sake of completeness.

\begin{lemma}[Degree Lemma]\label{lemma:degreelemma}
	Let $X$ be a halving space with cohomology frame $(\ka,\si)$. Let $D$ denote the degree of the generator $u\in H_\Ga^*$. Then for $x\in H^{2Dk}_\Ga(X;R)$ $$x\in \im\si\iff \deg_u(rx)=k$$
\end{lemma}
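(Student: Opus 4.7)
The plan is to handle the two directions separately. The forward implication ($x\in\im\si\Rightarrow \deg_u(rx)=k$) follows directly from the degree condition (DC): if $x=\si(y)$ with nonzero $y\in H^{2Dk}(X)$, then by \eqref{eq:restrictioneqn} we have $r\si(y)=\ka(y)u^k+\la_1 u^{k-1}+\cdots+\la_k$, a polynomial in $u$ of exact degree $k$, whose leading coefficient $\ka(y)$ is nonzero because $\ka$ is injective.

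For the converse, the main tool is the Leray--Hirsch isomorphism induced by $\si$. Since $\rho\si=\id$ and $H^*(B\Ga;R)=R[u]$, the map
\[ H^*(X)\otimes_R R[u]\longrightarrow H_\Ga^*(X),\qquad y\otimes u^j\longmapsto \si(y)\cdot u^j \]
is an isomorphism of graded $R[u]$-modules. Hence any $x\in H^{2Dk}_\Ga(X)$ can be written uniquely as $x=\sum_{j\geq 0}\si(y_j)\, u^j$ with $y_j\in H^{2Dk-Dj}(X)$. The hypothesis that $H^*(X)$ is concentrated in degrees $2Di$ forces $j$ to be even; writing $j=2\ell$, we have $y_{2\ell}\in H^{2D(k-\ell)}(X)$ for $0\leq \ell\leq k$, and all other $y_j$ vanish.

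Now I would apply $r$ termwise and use (DC): for each $\ell$ with $y_{2\ell}\neq 0$, the polynomial $r\si(y_{2\ell})$ has $u$-degree exactly $k-\ell$ with leading coefficient $\ka(y_{2\ell})$, so after multiplication by $u^{2\ell}$ the top $u$-term of $\si(y_{2\ell})\,u^{2\ell}$ sits in $u$-degree $(k-\ell)+2\ell=k+\ell$. Setting $m=\max\{\ell:y_{2\ell}\neq 0\}$ (with $m=-\infty$ if $x=0$), no other summand contributes to $u^{k+m}$, so the coefficient of $u^{k+m}$ in $rx$ is precisely $\ka(y_{2m})$, which is nonzero by injectivity of $\ka$. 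Therefore $\deg_u(rx)=k+m$, and the hypothesis $\deg_u(rx)=k$ forces $m=0$, yielding $x=\si(y_0)\in\im\si$.

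The essential point, and the only place where nontriviality enters, is the noncancellation of top $u$-terms among the summands $\si(y_{2\ell})\,u^{2\ell}$: since the resulting $u$-degree $k+\ell$ is strictly increasing in $\ell$, the largest index $m$ contributes an uncanceled leading term through the injective $\ka$. I do not expect any serious obstacle beyond careful degree bookkeeping and the parity restriction from the assumption that $H^*(X)$ is supported in degrees $2Di$, which is exactly what cleanly localizes the argument onto even $j$.
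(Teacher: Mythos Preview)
Your proof is correct and follows essentially the same approach as the paper: both use the Leray--Hirsch decomposition $x=\sum \si(\xi_i)u^{2(k-i)}$, then observe that the summand with the smallest $i$ (equivalently, your largest $\ell$) contributes an uncancelled top $u$-term of degree $2k-i=k+\ell$ via the injectivity of $\ka$. Your indexing by $\ell=k-i$ and explicit handling of the parity constraint are merely cosmetic differences from the paper's write-up.
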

\begin{proof}
	The direction $\then$ holds by definition. For the other direction, let $x\in H^{2Dk}_\Ga(X;R)$, and assume $x\not\in \im\si$. By the Leray-Hirsch theorem
	$$ x=\sum_{i=0}^k\si(\xi_i)u^{2(k-i)}$$
	for some $\xi_i\in H^{2Di}(X)$. Since $r$ is an $R[u]$-module morphism,
	$$ rx=\sum_{i=0}^k(r\si\xi_i)u^{2(k-i)}=\sum_{i=0}^kp_i(u)u^{2(k-i)}$$
	where $r\si\xi_i=p_i(u)\in H^*(X^\Ga)[u]$ is a polynomial in $u$ of degree $\leq i$. Then $p_i(u)u^{2(k-i)}$ has degree $\leq 2k-i$ for each $i$. Take the smallest $0\leq i<k$, such that $\xi_i\neq 0$. Then $r\si\xi_i=\ka(\xi_i) u^i+...$ and since $\ka$ is injective, $r\si\xi_i$ has degree $i$. It follows that $rx$ is a polynomial in $u$ of degree $2k-i$. Since $i<k$, this is a contradiction, since $rx$ is a polynomial of degree $k$ by assumption. Therefore $\xi_i=0$ for $i<k$ and $x=\si(\xi_k)$.
	\end{proof}

In particular, by using the Leray-Hirsch theorem, Lemma \ref{lemma:degreelemma} implies that if $x\in H^{2Dk}_\Ga(X;R)$, and $\deg_u(rx)<k$, then $x=0$.
\begin{theorem}\label{thm:multiplicativity}
	Let $X$ be a halving space. Then $\ka$ and $\si$ are multiplicative.
\end{theorem}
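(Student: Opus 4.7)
The plan is to deduce the multiplicativity of $\si$ directly from the Degree Lemma, and then to extract the multiplicativity of $\ka$ by reading off leading coefficients in $u$. Fix homogeneous classes $x\in H^{2Di}(X)$ and $y\in H^{2Dj}(X)$, and consider the product $\si(x)\si(y)\in H^{2D(i+j)}_\Ga(X)$. Since $r$ is a ring homomorphism, the restriction equation \eqref{eq:restrictioneqn} applied to $x$ and $y$ yields
$$r\bigl(\si(x)\si(y)\bigr)\;=\;r\si(x)\cdot r\si(y)\;=\;\ka(x)\ka(y)\,u^{i+j}+\text{(terms of strictly lower }u\text{-degree)},$$
so in any case $\deg_u r(\si(x)\si(y))\leq i+j$.

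I would then split into two cases. If $\ka(x)\ka(y)\neq 0$, the $u$-degree of $r(\si(x)\si(y))$ is exactly $i+j$, and the Degree Lemma supplies a class $z\in H^{2D(i+j)}(X)$ with $\si(z)=\si(x)\si(y)$; applying $\rho$ and using both $\rho\si=\id$ and the fact that $\rho$ is a ring homomorphism identifies $z$ with $\rho(\si(x)\si(y))=xy$. If instead $\ka(x)\ka(y)=0$, the $u$-degree strictly drops below $i+j$, and the corollary to the Degree Lemma noted just after its proof forces $\si(x)\si(y)=0$; applying $\rho$ then gives $xy=0$, so $\si(xy)=0=\si(x)\si(y)$ as well. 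Either way, $\si(xy)=\si(x)\si(y)$, proving that $\si$ is multiplicative.

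Multiplicativity of $\ka$ is then immediate: applying $r$ to the identity $\si(xy)=\si(x)\si(y)$ and comparing the coefficient of $u^{i+j}$, the left side gives $\ka(xy)$ by the definition \eqref{eq:kadef} combined with \eqref{eq:restrictioneqn}, while the right side gives $\ka(x)\ka(y)$ by the leading-term computation above. So $\ka(xy)=\ka(x)\ka(y)$.

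The main obstacle, such as it is, will be purely book-keeping: one must be careful about the degenerate case where the putative leading coefficient $\ka(x)\ka(y)$ vanishes, since then the conclusion comes from the $u$-degree strictly dropping (and the contrapositive form of the Degree Lemma) rather than from $\si(x)\si(y)$ lying in $\im\si$. Otherwise the argument is a formal consequence of the restriction equation together with the fact that $r$ and $\rho$ are ring homomorphisms.
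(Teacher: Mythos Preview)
Your proof is correct and follows essentially the same route as the paper: compute $r(\si(x)\si(y))$ from the restriction equation, invoke the Degree Lemma to conclude $\si(x)\si(y)\in\im\si$, identify the preimage as $xy$ via $\rho$, and then read off $\ka(xy)=\ka(x)\ka(y)$ from the $u^{i+j}$-coefficient. Your explicit case split on whether $\ka(x)\ka(y)$ vanishes is in fact more careful than the paper's version, which applies the Degree Lemma without separating out the degenerate case; but since the proof of the Degree Lemma actually shows that $\deg_u(rx)\leq k$ already forces $x\in\im\si$ (the strict-inequality case giving $x=0$), the two arguments amount to the same thing.
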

\begin{proof}
	Let $a\in H^{2Dk}(X)$, $b\in H^{2Dl}(X)$. Set $x:=\si(a)\si(b)$, note that $\rho(x)=ab$. Then
     $$rx=r\big(\si(a)\big)r\big(\si(b)\big)=(\ka(a)u^k+...)(\ka(b)u^l+...),$$
	so $x=\si(y)$ for some $y\in H^{2D(k+l)}(X)$ by the degree lemma. Since $$ab=\rho(x)=\rho(\si(y))=y,$$
	so $x=\si(y)=\si(ab)$ and by definition $x=\si(a)\si(b)$ proving multiplicativity of $\si$. Using
     $$r\si(ab)=r\big(\si(a)\big)r\big(\si(b)\big),$$
the degree $k+l$ part of the left hand side is $\ka(ab)$ and on the right hand side $\ka(a)\ka(b)$.
\end{proof}
In particular, multiplicativity of $\si$ implies that the Leray-Hirsch isomorphism induced by $\si$
$$ H_\Ga^*(X)\iso H^*(X)\otimes_R H_\Ga^*$$
is a ring isomorphism.
\begin{corollary}[Naturality]\label{cor:naturality}
	{Let $X, Y$ be halving spaces with some cohomology frames $(\ka_X,\si_X)$ and $(\ka_Y,\si_Y)$ for $X$ and $Y$ respectively. If $f:X\to Y$ is a $\Ga$-equivariant map, then}
	$$ \si_X \circ H^*f=H_\Ga^*f \circ \si_Y:H^{*}(Y)\to H_\Ga^*(X)$$
	and
	$$ \ka_X \circ H^*f=H^*f^\Ga \circ \ka_Y:H^{2*}(Y)\to H^*(X^\Ga)$$
\end{corollary}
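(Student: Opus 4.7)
The plan is to reduce both identities to an application of the Degree Lemma (Lemma \ref{lemma:degreelemma}) by exploiting naturality of the Borel construction. Since $f\colon X\to Y$ is $\Ga$-equivariant, it restricts to a map $f^\Ga\colon X^\Ga\to Y^\Ga$ on fixed-point sets. Functoriality of equivariant cohomology then furnishes the two naturality squares: the forgetful map satisfies $\rho_X\circ H_\Ga^*f=H^*f\circ\rho_Y$, and the restriction-to-fixed-points map satisfies $r_X\circ H_\Ga^*f=(H^*f^\Ga)[u]\circ r_Y$, where the right-hand map applies $H^*f^\Ga$ coefficient-wise in the polynomial variable $u$.

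Fix $y\in H^{2Dk}(Y)$ and set $x:=H_\Ga^*f(\si_Y(y))\in H_\Ga^{2Dk}(X)$. First I would apply the restriction equation \eqref{eq:restrictioneqn} for $\si_Y(y)$ and chase the naturality diagram for $r$, obtaining
$$
r_X(x)=H^*f^\Ga(\ka_Y(y))\,u^k+H^*f^\Ga(\la_1)\,u^{k-1}+\ldots+H^*f^\Ga(\la_k),
$$
a polynomial in $u$ of degree at most $k$. By the Degree Lemma, either $\deg_u r_X(x)=k$, forcing $x\in\im\si_X$, or else $x=0$ outright. Naturality of $\rho$ combined with $\rho_Y\si_Y=\id$ yields $\rho_X(x)=H^*f(y)$ in either case, so in the nondegenerate case $x=\si_X(\rho_X(x))=\si_X(H^*f(y))$, and in the degenerate case $H^*f(y)=\rho_X(x)=0$, whence $\si_X(H^*f(y))=0=x$ as well. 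This establishes the first identity.

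For the second identity I would simply compare the coefficient of $u^k$ on both sides of $r_X(\si_X(H^*f(y)))=r_X(x)$: the left-hand side yields $\ka_X(H^*f(y))$ by the restriction equation for $\si_X$, while the right-hand side yields $H^*f^\Ga(\ka_Y(y))$ by the computation above. I do not anticipate any genuine obstacle: the only subtlety is the degenerate case in which the leading coefficient $H^*f^\Ga(\ka_Y(y))$ happens to vanish, so that the Degree Lemma produces $x=0$ rather than a nontrivial element of $\im\si_X$; as noted, both asserted equalities then hold trivially, and no input beyond the Degree Lemma and functoriality of the Borel construction is required.
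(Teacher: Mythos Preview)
Your proof is correct and follows exactly the route the paper indicates (``the proof proceeds similarly as the proof of multiplicativity using the degree lemma''): set $x=H_\Ga^*f(\si_Y(y))$, bound $\deg_u r_X(x)\leq k$ via naturality of $r$ and the restriction equation for $\si_Y$, invoke the Degree Lemma (and its consequence that $\deg_u<k$ forces $x=0$) to get $x\in\im\si_X$, then identify $\rho_X(x)=H^*f(y)$ via naturality of $\rho$. Your explicit treatment of the degenerate case, where $H^*f^\Ga(\ka_Y(y))=0$, is a welcome clarification that the paper's sketch leaves implicit.
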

The proof proceeds similarly as the proof of multiplicativity using the degree lemma. Note that naturality implies uniqueness of the cohomology frame $(\ka,\si)$.

\begin{example}\label{ex:realGrassmannian}
	Let $X=\Gr_{2}(\R^{2n})$. Consider the $\Ga=\U(1)$-action on $\R^{2n}$ by identifying it with $\C^n$ and acting by complex multiplication. This induces an action on $X$. With this action, $X$ is a circle space with fixed point set $X^\Ga=\CP^{n-1}$.
\end{example}
\begin{proof}
	Since the $\U(1)$-invariant subspaces are exactly the complex subspaces, $X^\Ga$ can be identified with $\Gr_1(\C^n)=\CP^{n-1}$. In terms of characteristic classes, the ring structure can be written as
	$$ H^*(\Gr_2(\R^{2n});\Q)=\Q[x]/x^n,\qquad H^*(\CP^{n-1};\Q)=\Q[y]/y^n,$$
where $x=p_1(S_\R)$, $y=c_1(S_\C)$, and $S_\R\to \Gr_{2}(\R^{2n})$, $S_\C\to \CP^{n-1}$ are the tautological bundles.

	Let $\si$ be defined on the additive generators by $\si(x^i):=(p_1^\Ga(S_\R)-u^2)^i$, where $p_i^\Ga(S_\R)=p_i(B_\Ga S_\R\to B_\Ga X)$ denotes the $i$-th equivariant Pontryagin class. This $\si$ is a Leray-Hirsch section, and it satisfies the degree condition (DC), which can be shown by the following computation. First,
	$$ B_\Ga(S_\R)|_{X^\Ga}=B_\Ga(S_\R|_{X^\Ga})=B_\Ga S_\C.$$
Since $\Ga$ acts on $S_\C$ by complex multiplication, we can rewrite it as the tensor product of equivariant bundles $S_\C=S_\C^0\otimes_\C \C^{tw}$, where $S_\C^0$ denotes $S_\C$ with the trivial action and $\C^{tw}$ denotes the trivial bundle, with the nontrivial $\Ga$-action given by complex multiplication. Then as bundles over $B_\Ga X^\Ga=B\Ga \times X^\Ga$,
	$$B_\Ga S_\C=S_\C^0\otimes_\C \tau,$$ where $B_\Ga \C^{tw}=\tau\to \CP^\infty$ is the tautological bundle and we omit the notation for the pullbacks to the product. Therefore
	$$rp_{i}^\Ga(S_\R)=p_i(B_\Ga S_\C)=(-1)^ic_{2i}((S_\C^0\otimes_\C \tau)\otimes_\R \C)$$
	and
	$$c_{*}((S_\C^0\otimes_\C \tau)\otimes_\R \C)=c_{*}(S_\C^0\otimes_\C \tau)c_*(\conj{S_\C^0\otimes_\C \tau})=(1+y+u)(1-y-u),$$
	so
	$$ r\si(x^i)=r(p_1^\Ga(S_\R)-u^2)^i=((y+u)^2-u^2)^i=(2yu+y^2)^i,$$
	hence $\si$ satisfies the degree condition, and $(\ka,\si)$ is a cohomology frame with $\ka(x^i)=2^iy^i$.
\end{proof}

Similar explicit formulas can be given for even partial flag manifolds, however we will use Theorem \ref{thm:BHI}. to prove that they are circle spaces.
\section{Geometry}\label{sec:geometry}
 The original Borel-Haefliger theorem states that for a smooth complexified projective variety $X_\C$, under certain conditions, the complexification map $[Z_\R]\mapsto [Z_\C]$ from $H^*(X_\R;\F_2)\to H^{2*}(X_\C;\F_2)$ is a well defined multiplicative isomorphism, where $Z_\R$ is a real subvariety.
Here by \emph{real subvariety}, we mean a subset $Z_\R\se X_\R$ defined by real algebraic equations whose Zariski closure in $X_\C$ equals the set of complex points defined by the same equations -- its \emph{complexification}. Note, that $Z_\R=(Z_\C)^\Ga$ is the fixed point set of the complex conjugation on $Z_\C$. In terms of conjugation spaces, we can rephrase the Borel-Haefliger theorem as
\begin{theorem}\label{pr:conj-ew}\cite{BorelHaefliger1961}, \cite{VanHamel2007}
	For $Z_\C\se X_\C$ with the properties above: $X_\C$ is a conjugation space and
 \begin{equation}\label{eq:kappa}
 \kappa[Z_\C]=[Z_\R].
 \end{equation}
\end{theorem}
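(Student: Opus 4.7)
The plan is to build an explicit cohomology frame $(\ka,\si)$ on $X_\C$ whose value on each real-subvariety class forces $\ka[Z_\C]=[Z_\R]$ by construction, and then invoke the structural results of Section \ref{sec:halving} for the remaining algebraic properties. This is the $\Ga=\Z_2$ specialization of Van Hamel's approach, which in the $\U(1)$ case becomes the generalized Theorem \ref{thm:BHI}.

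By hypothesis, fix an $\F_2$-basis of $H^*(X_\C;\F_2)$ represented by real subvarieties $\{Z_{\C,i}\}$. Each $Z_{\C,i}$ is $\Ga$-invariant, so it carries an equivariant fundamental class $[Z_{\C,i}]^\Ga \in H_\Ga^*(X_\C;\F_2)$. I would define $\si([Z_{\C,i}]):=[Z_{\C,i}]^\Ga$ and extend $\F_2$-linearly; since the forgetful map $\rho$ sends equivariant fundamental classes to their ordinary counterparts, $\rho\si=\id$ and $\si$ is a Leray-Hirsch section.

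The heart of the argument is computing $r([Z_\C]^\Ga)\in H^*(X_\R;\F_2)[u]$. The fixed-point inclusion $X_\R\hookrightarrow X_\C$ meets $Z_\C$ in $Z_\R$, with normal bundles $N_{Z_\R/Z_\C}\subseteq N_{X_\R/X_\C}|_{Z_\R}$ whose quotient is canonically the real normal bundle $E:=\nu(Z_\R\se X_\R)$, equipped with the nontrivial $\Ga=\Z_2$-action induced by multiplication by $i$ on the complex normal directions. Quillen's equivariant excess intersection formula then yields
\[ r([Z_\C]^\Ga) \;=\; [Z_\R]\cdot e^\Ga(E). \]
Since $\Ga$ acts on $E$ by the sign representation, the splitting principle together with the identity $e^\Ga(L)=u+w_1(L)$ for a sign-twisted real line bundle $L$ gives
\[ e^\Ga(E)\;=\;u^k+w_1(E)u^{k-1}+\cdots+w_k(E),\qquad k=\codim_\R(Z_\R)=\codim_\C(Z_\C). \]
Thus the $u^k$-coefficient of $r\si([Z_\C])$ equals $[Z_\R]$, which simultaneously verifies the restriction equation (DC) and identifies $\ka[Z_\C]=[Z_\R]$.

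Injectivity of $\ka$ follows from the second hypothesis that $\{[Z_\R]\}$ additively generates $H^*(X_\R;\F_2)$; multiplicativity of $\ka$ and $\si$, together with uniqueness of the frame, then come from Theorem \ref{thm:multiplicativity} and Corollary \ref{cor:naturality}. The main technical obstacle will be deploying the excess intersection formula in the correct generality: $Z_\C$ is typically singular, so one must work in equivariant Borel--Moore homology (or reduce to the smooth case via an equivariant resolution), and one has to identify the excess bundle with its sign-twisted $\Ga$-action precisely so that its equivariant Euler class has leading coefficient $1$ in $u$. Once this foundational step is in place, the rest is linear algebra organized by the degree lemma (Lemma \ref{lemma:degreelemma}).
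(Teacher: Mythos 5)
Your overall strategy is the one this paper itself uses for the general statement: define $\si$ on a basis of invariant cycle classes via equivariant fundamental classes, compute $r\si$ with Quillen's equivariant excess intersection formula, identify the excess bundle of a complexification as $\nu(Z_\R\se X_\R)\otimes i\R$ so that $e_\Ga(\eta)=u^k+w_1(\nu)u^{k-1}+\cdots+w_k(\nu)$ has leading $u$-coefficient $1$, and then let the degree lemma and Theorem \ref{thm:multiplicativity} do the rest. This is exactly Lemma \ref{lemma:mainlemma1} together with the excess-bundle identification in the proof of Proposition \ref{prop:Steenrod}, fed into Theorem \ref{thm:BHI}, specialized to $(\Ga,R)=(\Z_2,\F_2)$; your remark about singular $Z_\C$ is the issue the paper handles with fat nonsingular sets and good $\Ga$-invariant cycles rather than Borel--Moore homology, but either device works.

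The one step you assert rather than prove is injectivity of $\ka$, i.e.\ the upgrade from (DC$^{-}$) to (DC). The hypothesis that real algebraic cycles generate $H^*(X_\R;\F_2)$ gives you \emph{surjectivity} of $\ka$ (every real cycle class is hit by the class of its complexification), not injectivity: the real parts $[Z_{\R,i}]$ of your chosen basis $[Z_{\C,i}]$ could a priori be linearly dependent. To close this you need one of two arguments. (a) Borel--Haefliger's: the Smith inequality $\dim_{\F_2}H^*(X^\Ga)\le\dim_{\F_2}H^*(X)$ combined with surjectivity of $\ka$ forces bijectivity. (b) Van Hamel's, which is what Lemma \ref{lemma:injectivity} encodes: for $0\neq x\in H^{2k}(X)$, $\F_2$-Poincar\'e duality gives $y$ with $xy$ the top class; then $r(\si(x)\si(y))=\ka(x)\ka(y)u^{n}+\cdots$, and injectivity of $r$ after inverting $u$ (localization, available since $X$ is equivariantly formal) forces $\ka(x)\neq 0$. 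Route (b) is the one that survives to the circle-space setting (where it needs orientability); route (a) uses the generation hypothesis on $H^*(X_\R)$ you invoked but has no $\U(1)$ analogue. Either way, as written this step is a genuine gap, though an easily fillable one.
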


We would like to find a similar connection for halving spaces. We illustrate on the Schubert varieties of Example \ref{ex:realGrassmannian}, that some adjustments are necessary.

Fix a complete flag $F_\bullet$ in $\R^{2n}$ such that $F_{2i}$ is $\U(1)$-invariant for all $i$; then
$$F_\bullet^\C=(F_0\leq F_2\leq \ldots\leq F_{2n})$$ is a complex flag, $F_i^\C=F_{2i}$. Let $Z$ be the real Schubert variety  $Z=\si_{(2i,2i)}^\R(F_\bullet)$. This $Z$ is $\U(1)$-invariant and naturally isomorpic to the Grassmannian $\Gr_2(F_{2(n-i)})$. The fixed point set $Z^\Ga$ is the set of complex lines contained in $F_{n-i}^\C$, so $Z^\Ga=\si_i^\C(F^\C_\bullet)$, which is naturally isomorphic to the projective space $\PP F_{n-i}^\C$.
If $Q_\R\to \Gr_{2}(\R^{2n})$ and $Q_\C\to \CP^{n-1}$ denote the tautological quotient bundles, then
	\begin{equation}\label{eq:SchubertPontryagin}
	[\si_{(2i,2i)}^\R]=p_i(Q_\R)=(-x)^i,\qquad  [\si_i^\C]=c_i(Q_\C)=(-y)^i,
	\end{equation}
with the appropriate coorientation of the submanifold $\si_{(2i,2i)}^\R\subset\Gr_2(\R^{2n})$,
	implying that instead of \eqref{eq:kappa} we have $\ka[Z]=2^i[Z^\Ga]$.

The goal of this section is to prove that for reasonable 'subvarieties' $Z$ of the halving space $X$ we have $\ka[Z]=\mu_Z[Z^\Ga]$, where $\mu_Z\in R$ is the \emph{excess multiplicity} of $Z$. First we discuss the notion of topological varieties and their fundamental cohomology classes in Section \ref{subsubsec:topvar}, then review a part of excess intersection theory necessary to introduce the notion of excess multiplicities in Section \ref{sec:eif}.

In this section we restrict our attention to \emph{halving manifolds}, smooth manifolds which are halving spaces with smooth $\Ga$-action with $(\Ga,R)$ being $(\Z_2,\F_2)$ or $(\U(1),\Q)$ and fix the corresponding degree $D=1,2$. In the context of conjugation manifolds, many of their properties can be found in \cite[Section 2.7]{HambletonHausmann2011}.

 Most of the results also hold for $(\Sp(1),\Q)$ under additional assumptions, see Remarks \ref{rmk:Sp1halvingcycle} and \ref{rmk:injectivitylemma} iii).

\subsection{Fundamental classes of real varieties}
The Borel-Haefliger theorems involve cohomology classes of real and complex algebraic varieties, and there are slightly different approaches in defining these. We use the definitions of \emph{topological varieties} of the original Borel-Haefliger paper \cite{BorelHaefliger1961} and van Hamel \cite{VanHamel2007}. More precisely, we use a variation, in the sense that we work in cohomology instead of homology; in particular we use coorientability instead of orientability. By manifold and submanifold we always mean \emph{smooth} manifold and submanifold. We expect that with enough care, most of the discussion generalizes to topological, even cohomological manifolds, however we will not need this generality.

This section is standard and well-known to experts, however because of the slight variations of these notions, we found it reasonable to include some details at least to fix terminology.

\subsubsection{Topological varieties}\label{subsubsec:topvar}
Borel and Haefliger define fundamental classes for a class of topological spaces that we will call---following van Hamel \cite{VanHamel2007}---\emph{topological varieties} (or in Borel and Haefliger's notation $VS_n$ spaces). 
This is a class of topological spaces which includes analytic manifolds and algebraic varieties; both real and complex.

Throughout the discussion, fix a smooth ambient manifold $X$ (connected, not necessarily orientable) and a principal ideal domain $K$ (typically $K=\Z$ or a field $\F_p, \Q, \R$), and all cohomology is taken with $K$-coefficients.
\begin{definition}\label{def:cohdim}
	$\Si\se X$ has \emph{cohomological codimension\index{cohomological codimension} $\codim_K \Si\geq k$} if $H^i(X, X\su \Si)=0$ for all $i\leq k-1$. It has \emph{cohomological codimension $\codim_K \Si=k$}, if $\codim_K \Si\geq k$ and $\codim_K \Si\not\geq k+1$.
\end{definition}
We use the convention $\codim_K \emptyset=\infty$.  If $Z\inj X$ is a smooth, connected $k$-codimensional submanifold which is not coorientable, then $\codim_\Z Z\geq k+1$.
\begin{definition}\label{def:topvar}
	A connected, closed subset $Z\se X$ is a \emph{topological subvariety of codimension $k$}\index{topological subvariety} if there exists an open subset $U\se Z$ which is a $k$-codimensional submanifold in $X$ and its complement $\Si:=Z\su U$ has $\codim_K \Si\geq k+1$. Such a set $U\se Z$ is called a \emph{fat nonsingular set}.
	
	More generally, a closed subset $Z\se X$ is a \emph{topological subvariety of codimension $k$} if it has finitely many connected components, each of which is a topological subvariety of codimension $k$.
\end{definition}

Topological subvarieties behave similarly to algebraic ones: if $Z\se X$ is a topological subvariety of codimension $k$, then we can define the set of \emph{regular points $Z_{\mathcal{R}}$}, which are the points having neighbourhoods that are locally submanifolds. Then the \emph{singular set $Z_S=Z\su Z_{\mathcal{R}}$} is contained in the complement $\Si$ of any fat nonsingular set, and the long exact sequence of the triple $(X,X\su Z_S,X\su \Si)$ shows that $Z_{\mathcal{R}}$ is also a fat nonsingular set. The main property of a topological subvariety $Z\se X$ relevant to us, is that if $Z$ has a fundamental class, then it is unique up to units of $K$.
\subsubsection{Fundamental class}
Let $Z\se X$ be a topological subvariety of codimension $k$ with fat nonsingular subset $U$, let $y\in U$. A \emph{normal disk $D_y\se X$ to $y\in U$} is a $k$-dimensional smoothly embedded disk $D_y\se X$ centered at $y$, intersecting $U$ in the single point $y$ transversally. The following construction is an extension of the fundamental cohomology class of submanifolds to topological subvarieties.
\begin{definition}
	Let $Z\se X$ be a $k$-codimensional topological subvariety over $K$ and $D_x$ a normal disk of $Z_{\mathcal{R}}$ at $x$. A \emph{local coorientation at $x\in Z_{\mathcal{R}}$}\index{local coorientation} is a generator of $H^k(D_x,D_x\su x)$.
\end{definition}

\begin{definition}\label{def:cycle}
	Let $Z\se X$ be a $k$-codimensional topological subvariety over $K$. A \emph{fundamental cohomology class}\index{fundamental class!topological subvariety} (over $K$) is an element $\braaa X\kettt\in H^k(X,X\su Z;K)$ whose restriction to $H^k(D_x,D_x\su x;K)$ is a generator for all regular points $x\in Z_{\mathcal{R}}$, where $D_x$ is a normal disk over $x$. If such a fundamental class exists, we say that $Z$ is a \emph{cycle}.
\end{definition}
This definition extends the notion of Thom class: if $Z\inj X$ is a submanifold, then it is a cycle iff it is coorientable. Even if a topological subvariety $Z\se X$ is a cycle, the (non-refined) class $[Z]\in H^k(X)$ can be zero (although this cannot happen to the refined class $\braaa Z \kettt\in H^k(X,X\su Z)$). In this section we distinguish the notation of refined class $\braaa Z\kettt$ and $[Z]$, but later we will denote both by $[Z]$. The following proposition summarizes some existence and uniqueness properties of fundamental classes, which follow from the previous discussion (see also \cite[Section A.1.2]{thesis}).
\begin{proposition}\label{prop:cycleexistenceuniqueness}
	Let $Z\se X$ be a $k$-codimensional topological subvariety with a fat nonsingular subset $U$, $\Si=Z\su U$.
	\begin{itemize}
		\item{Uniqueness: Given a fundamental class $\braaa U\kettt\in H^k(X\su \Si,X\su Z)$, $Z$ has at most one fundamental class $\braaa Z\kettt$ restricting to $\braaa U\kettt$.}
		\item{Uniqueness for $K=\F_2$: If $K=\F_2$, then $Z$ has at most one fundamental class $\braaa Z\kettt$.}
		\item{Existence: If $U$ has a fundamental class $\braaa U\kettt$, and if $\codim_K \Si\geq k+2$, then $X$ has a fundamental class restricting to $\braaa U\kettt$.}
	\end{itemize}
\end{proposition}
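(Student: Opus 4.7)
My plan is to deduce all three items from the long exact sequence of the triple $(X, X\su \Si, X\su Z)$,
\begin{equation*}
H^k(X, X\su \Si) \to H^k(X, X\su Z) \xrightarrow{\,r\,} H^k(X\su \Si, X\su Z) \to H^{k+1}(X, X\su \Si),
\end{equation*}
combined with the cohomological vanishing $H^i(X, X\su \Si) = 0$ for $i \le \codim_K \Si - 1$ supplied by Definition \ref{def:cohdim}. The fat nonsingular condition $\codim_K \Si \ge k+1$ kills the leftmost term, so $r$ is injective; the stronger existence hypothesis $\codim_K \Si \ge k+2$ kills the rightmost term as well, upgrading $r$ to an isomorphism.

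The first uniqueness clause is then immediate: two candidates for $\braaa Z \kettt$ with the same image $\braaa U \kettt$ under the injective $r$ must agree. For the $\F_2$-version I would observe that any fundamental class of $Z$ restricts (via $r$) to a fundamental class of $U$ in $H^k(X \su \Si, X \su Z; \F_2)$, and the latter is itself unique over $\F_2$: the local group $H^k(D_x, D_x \su x; \F_2) \iso \F_2$ has a unique generator, and a componentwise Thom isomorphism on a tubular neighborhood of $U$ inside $X \su \Si$ forces at most one class in $H^k(X\su \Si, X\su Z; \F_2)$ to restrict to these prescribed generators. Combining this with the first clause yields uniqueness of $\braaa Z\kettt$ over $\F_2$.

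For existence I set $\braaa Z \kettt := r^{-1} \braaa U \kettt$. For every $x \in U$, the restriction $H^k(X, X\su Z) \to H^k(D_x, D_x \su x)$ factors through $r$ followed by restriction of $\braaa U \kettt$, hence sends $\braaa Z \kettt$ to a generator. To cover the remaining regular points $x \in Z_{\mathcal R} \su U$ I argue by density and connectedness: since $\codim_K \Si \ge k+2$ strictly exceeds $\codim_K Z = k$, the set $\Si$ is nowhere dense in $Z$, so $U$ meets every connected component $C$ of $Z_{\mathcal R}$. Restricting $\braaa Z \kettt$ to a tubular neighborhood of $C$ inside $X \su Z_S$ yields a class in a Thom-type group for the connected smooth submanifold $C$; restriction to a generator at one point of $C$ then forces the same at every point of $C$, finishing the verification of the fundamental class condition.

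The main obstacle is this last propagation step: while the cohomological input is a formal consequence of the triple long exact sequence and the vanishing provided by Definition \ref{def:cohdim}, upgrading the local generator property from $U$ to all of $Z_{\mathcal R}$ requires the nowhere-density of $\Si$ in $Z$ and a connectedness argument on each component of $Z_{\mathcal R}$. Once these geometric points are in place, the proof is uniform across the three clauses.
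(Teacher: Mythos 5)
The paper does not actually write out a proof of this proposition (it asserts that it "follows from the previous discussion" and defers to the thesis), but the intended skeleton is exactly yours: the long exact sequence of the triple $(X,X\su\Si,X\su Z)$, with $H^k(X,X\su\Si)=0$ forced by $\codim_K\Si\ge k+1$ giving injectivity of $r$, and $H^{k+1}(X,X\su\Si)=0$ forced by $\codim_K\Si\ge k+2$ upgrading $r$ to an isomorphism. Your treatment of both uniqueness clauses is correct and complete (for the $\F_2$ clause, the componentwise mod~$2$ Thom isomorphism indeed pins down $\braaa U\kettt$ uniquely).

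The gap is in the propagation step of the existence clause, which you rightly isolate as the crux but do not actually close. Your justification is that $\codim_K\Si\ge k+2>k$ makes $\Si$ nowhere dense in $Z$, so that $U$ meets every component of $Z_{\mathcal{R}}$. This inference is not valid: cohomological codimension in the sense of Definition \ref{def:cohdim} is not the manifold codimension, and a non-coorientable codimension-$k$ submanifold can have $\codim_K$ as large as $k+2$. Concretely, for $K=\Q$ or $\Z$ the standard $\RP^2\se\RP^3$ has $H^i(\RP^3,\RP^3\su\RP^2;\Q)\iso H^{i-1}(\RP^2;\tilde{\Q})$, which vanishes for $i\le 2$, so $\codim_\Q\RP^2=3=k+2$ with $k=1$; taking $Z=S^2\sqcup\RP^2\se\RP^3$ with $U=S^2$ and $\Si=\RP^2$ satisfies all hypotheses of the existence clause, yet the lifted class cannot be a generator at the regular points lying on the $\RP^2$ component (its rational Thom group is zero). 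Even over $\F_2$, where a codimension-$k$ submanifold does have $\codim_{\F_2}$ exactly $k$, deducing "nowhere dense in $Z$" from the paper's \emph{global} vanishing condition $H^{\le k+1}(X,X\su\Si)=0$ requires a locality argument (e.g.\ the Thom isomorphism with supports, $H^i(\nu,\nu\su A)\iso H^{i-k}(Z_{\mathcal{R}},Z_{\mathcal{R}}\su A;\mathcal{O})$ for $A$ closed in $Z_{\mathcal{R}}$, which is the device the paper uses just before the proposition to show $Z_{\mathcal{R}}$ is a fat nonsingular set); the comparison maps between the relevant relative groups point the wrong way for a purely formal deduction. The honest fix is to add (or verify in the cases of interest) that $U$ meets every component of $Z_{\mathcal{R}}$ — for instance by working with $U=Z_{\mathcal{R}}$ itself — after which your local-constancy argument on each component's possibly twisted Thom group does finish the proof.
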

\begin{remark}\label{rmk:signambiguity}
	If $Z$ is a complex subvariety, then there is a canonical choice for the fundamental class: the complex structure induces an orientation on the normal bundle of the smooth part. If $Z$ is a topological subvariety, and $K=\F_2$, then by the previous proposition, the fundamental class is unique.
	However, for $K=\Z$ and $Z$ connected we have two choices of the fundamental class $[Z]$ according to the choice of the orientation of the normal bundle of $U$.
\end{remark}
\subsection{Excess intersection}\label{sec:eif}
Quillen introduced the excess intersection formula in the context of complex cobordism in \cite{Quillen1971}. We recall Quillen's results, define the excess weight, and then prove the excess weight lemma (Lemma \ref{lemma:mainlemma1}).
		\subsubsection{Clean intersection, excess bundle}
	Smooth submanifolds $Y,Z\inj X$ are said to \emph{intersect cleanly}\index{clean intersection}, if their intersection $W:=Y\cap Z$ is a submanifold and $TY|_W\cap TZ|_W=TW$. The \emph{excess bundle}\index{excess bundle}\label{word:excessbundle} of a clean intersection is $\eta(Y,Z):=TX|_W/(TY|_W+TZ|_W)$. Denoting the inclusion maps
	\begin{displaymath}\tag{EIF}\label{eq:EIFdiagram}
	\xymatrix{
		W \ar@{^{(}->}[r]^{j}\ar@{^{(}->}[d]_{g}& Z\ar@{^{(}->}[d]^f\\
		Y \ar@{^{(}->}[r]^{i} &X
	}\end{displaymath}
	the relations $\nu_i|_W\iso \nu_j\oplus \eta$ and $\nu_f|_W\iso \nu_g\oplus \eta$ hold: for clean intersections the defining short exact sequence of $\eta$ induces
	\begin{displaymath}
	\xymatrix{
		0\ar@{->}[r]^-{}&\mbox{$\underbrace{\left(TY|_W+TZ|_W\right)\big/TZ|_W}_{\nu_g}$}\ar@{->}[r]^-{}&\mbox{$\underbrace{TX|_W\big/TZ|_W}_{\nu_f|_W}$}\ar@{->}[r]^-{}&\eta \ar@{->}[r]^-{}&0
	}\end{displaymath}
	where the first term is isomorphic to $\nu_g$ by the isomorphism theorems. If $f,g$ are cooriented, then there is a unique compatible orientation on $\eta$ such that $\nu_f|_W=\nu_g\oplus \eta$ as oriented bundles.
	
	\begin{remark}\label{rmk:compatibleort}\mbox{}

		\begin{itemize}
			\item The direct sum orientation depends on the order of $\nu_g\oplus \eta$, so let us adopt this convention.
			\item If $f,i,j$ are cooriented, then $\nu_g$ is orientable, with a unique compatible orientation satisfying $$\nu_f|_W\oplus \nu_j=\nu_i|_W\oplus \nu_g$$ as oriented bundles.
		\end{itemize}
	\end{remark}
\subsubsection{Equivariant excess intersection formula}
For the following proposition, see Quillen \cite[Proposition 3.6]{Quillen1971}.
\begin{proposition}\label{prop:eif}
Let $\Ga=\U(1)$ act on $X$. Let $Z\inj X$ be a $\Ga$-invariant oriented smooth submanifold. Then $Z\cap X^\Ga$ is a clean intersection and all maps in
\begin{displaymath}
\xymatrix{
	Z^\Ga \ar@{^{(}->}[r]^{j}\ar@{^{(}->}[d]_{g}& Z\ar@{^{(}->}[d]^f\\
	X^\Ga \ar@{^{(}->}[r]^{i} &X
}\end{displaymath}
can be compatibly oriented, and with these orientations
$$i^*f_!z=g_!\big(j^*z\cdot e(\eta)\big) $$
in $H_\Ga^*(X^\Ga,X^\Ga\su Z^\Ga)$.
\end{proposition}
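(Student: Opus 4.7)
The plan is to verify cleanness and compatible orientability as local consequences of the slice theorem for $\U(1)$-actions, and then reduce the formula itself to its non-equivariant counterpart (Quillen \cite[Proposition 3.6]{Quillen1971}) by applying the Borel construction to the entire square \eqref{eq:EIFdiagram}.

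First I would show that $Z\cap X^\Ga = Z^\Ga$ is a clean intersection and that $Z^\Ga$ is a smooth submanifold. The set-theoretic equality is immediate from $\Ga$-invariance of $Z$. At any fixed point $p\in Z^\Ga$, the equivariant tubular neighbourhood theorem produces a $\Ga$-invariant chart in which the action is linearized and $T_p Z\se T_p X$ is a $\U(1)$-invariant linear subspace; decomposing $T_pX$ into isotypic pieces one obtains
$$T_p Z^\Ga = (T_p Z)^\Ga = T_p Z\cap T_p X^\Ga,$$
so $Z^\Ga$ is smooth and the intersection is clean in the sense of Section \ref{sec:eif}. For orientations, the non-trivial-weight components of $\nu_i$ and $\nu_j$ are sums of complex line bundles (non-trivial $\U(1)$-characters carry a canonical complex structure), and are therefore canonically oriented; combining these with the given orientation of $\nu_f$ and the conventions of Remark \ref{rmk:compatibleort} determines compatible orientations on $\nu_g$ and on the excess bundle $\eta$.

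Next I would reduce the cohomological identity to the non-equivariant setting via the Borel construction. Applying $E\Ga\times_\Ga(-)$ termwise to \eqref{eq:EIFdiagram} produces a commutative square of closed inclusions
$$\xymatrix{B_\Ga Z^\Ga \ar@{^{(}->}[r]^{Bj}\ar@{^{(}->}[d]_{Bg}& B_\Ga Z\ar@{^{(}->}[d]^{Bf}\\ B_\Ga X^\Ga \ar@{^{(}->}[r]^{Bi} &B_\Ga X}$$
whose normal bundles are the Borel constructions of the original ones, so the intersection remains clean and the excess bundle becomes $B_\Ga \eta$. Since equivariant cohomology, equivariant Gysin maps and equivariant Euler classes are by definition the ordinary ones on the Borel construction, the equivariant statement reduces to the non-equivariant identity
$$(Bi)^*(Bf)_! z = (Bg)_!\bigl((Bj)^*z\cdot e(B_\Ga \eta)\bigr),$$
which is Quillen's excess intersection formula \cite[Proposition 3.6]{Quillen1971} applied to the square above (or to its finite-dimensional approximations $E_n\Ga\times_\Ga(-)$, then passing to the limit).

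The main technical obstacle is the bookkeeping of orientations: one has to confirm that the orientation of $\eta$ determined by the splitting $\nu_f|_W = \nu_g\oplus\eta$ of Remark \ref{rmk:compatibleort} agrees with the complex orientation it inherits as the non-trivial-weight component of $\nu_f|_W$. Since $\Ga$ acts trivially on $W\subset X^\Ga$, the subbundle $\nu_g\se\nu_f|_W$ is precisely the trivial-weight component, so the $\U(1)$-weight decomposition of $\nu_f|_W$ coincides with the algebraic splitting $\nu_g\oplus\eta$; the resulting orientations therefore agree, and the analogous check for $\nu_i|_W = \nu_j\oplus\eta$ is consistent because reordering the complex (hence even-dimensional) summands does not affect orientation. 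Once this is settled, no sign corrections intervene and the formula follows.
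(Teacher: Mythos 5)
Your proof is correct and follows essentially the same route as the paper, which offers no argument of its own beyond citing Quillen \cite[Proposition 3.6]{Quillen1971}: you reduce to that same non-equivariant formula via the Borel construction (finite-dimensional approximations), and your fill-ins — cleanness from the slice theorem, canonical orientations from the complex structure on the non-trivial-weight components, and the observation that $\nu_g$ is the trivial-weight part of $\nu_f|_W$ so the two orientations of $\eta$ agree — are exactly the details the paper leaves implicit (compare the orientation discussion in its Remark on compatible orientations and in the proof of the excess multiplicity lemma).
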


We will be mainly interested in the special case $z=1$ when we have
\[[Z]|_{X^\Ga}=i^*f_!1=g_!\big( e(\eta)\big). \]

\subsubsection{Excess multiplicity}
Without going into the general theory of real representations we define weights for the three groups we are interested in:
\begin{itemize}
  \item $\Ga=\Z_2$: There are two irreducible real representations, the one dimensional trival, and the one dimensional non-trival one. We define their weights to be 0 and 1 in $\F_2$, respectively.
  \item $\Ga=\U(1)$: There is the one dimensional trivial representation, and for every positive integer $n$ there is a 2-dimensional irreducible real representation. We define their weights to be 0 and $n$ in $\N$, respectively.
  \item $\Ga=\Sp(1)$: We restrict an $\Sp(1)$-representation to its maximal torus, which is a $\U(1)$, and the weights are defined according to the weights of the maximal torus.
\end{itemize}

The multiset of weights is denoted by $W(V)$ for the $\Ga$-representation $V$. So for example $W(V)$ is a set of non-negative numbers with multiplicities for $\Ga=\U(1)$.
The \emph{multiplicity} of $V$ is defined as the product $\mu(V):=\prod_{w\in W(V)}w$.

\begin{remark}\label{euler}
We can always choose an orientation of $V$, such that
\[ e_\Ga(V)=\mu(V)u^k\]
for the $\Ga$-equivariant Euler class of $V$, where $k$ depends on $\Ga$ and the dimension of $V$.
\end{remark}

\begin{definition}\label{def:excessmultiplicity}
Let $(\Ga,R)$ be a halving pair. Let $X$ be a $\Ga$-manifold, $Z\se X$ be a $\Ga$-invariant topological subvariety and  $z\in Z_{\mathcal{R}}^\Ga$. Let $\eta$ denote the excess bundle of the (clean) intersection $Z^\Ga_{\mathcal{R}}=Z_{\mathcal{R}}\cap X^\Ga$. We call the elements of $W(\eta_z)$ the excess weights of $Z$ at $z$. Notice that $W(\eta_z)$ is a subset of the multiset of weights of $\nu_z(X^\Ga\subset X)$. We will call the latter \emph{the normal weights of the halving space $X$ at $z$}. The multiplicity $\mu(\eta_z)$ of the $\Ga$-representation $\eta_z$ is called
 the \emph{excess multiplicity} of $Z$ at $z$.
\end{definition}

\begin{remark}\label{slice} For conjugation spaces and circle spaces, the slice theorem \cite[Theorem B.24]{GuilleminGinzburgKarshon} implies that the excess multiplicity is not zero. For circle spaces it also implies that the rank of the excess bundle $\eta(Z)$ is always even.
\end{remark}

\subsubsection{Equivariant fundamental class}
Let $\Ga$ be a compact connected Lie group and $Z$ be a $\Ga$-invariant topological subvariety with a $\Ga$-invariant fat nonsingular subset
$U$ and singular set $\Si$. For connected groups $\Ga$, existence of a fundamental class is a nonequivariant phenomenon: if $Z$ is a $\Ga$-invariant cycle with fundamental class $[Z]\in H^k(X,X\su Z)$, then there exists a unique $[Z]_\Ga\in H_\Ga^k(X,X\su Z)$ restricting to $[Z]$.

On the other hand, this does not ensure that $Z^\Ga$ is a topological subvariety, since it might happen that $\Si^\Ga$ has too large dimension. This motivates the following definition, introduced for $\Ga=\Z_2$ in \cite{VanHamel2007}:
 \begin{definition}\label{def:goodcycle}
A $\Ga$-invariant closed subset $Z\se X$ is a \emph{good $\Ga$-invariant subvariety of codimension type $(k,l)$} if
   \begin{itemize}
		\item $Z\se X$ is a topological subvariety of codimension $k$ with $\Ga$-invariant fat nonsingular set $U$
		\item $Z^\Ga\se X^\Ga$ is a (nonempty) topological subvariety of codimension $l$ with fat nonsingular set $U^\Ga$.
	\end{itemize}
We call such a set $U$ a \emph{$\Ga$-invariant fat nonsingular set}. If in addition $Z\se X$ is a cycle and its excess multiplicity $\mu_Z:=\mu(\eta_z)$ is independent of $z\in Z_{\mathcal{R}}^\Ga$, then we say that $Z$ is a \emph{good $\Ga$-invariant cycle of codimension type $(k,l)$}\index{good invariant!cycle}.
 \end{definition}

\begin{example}\label{ex:stratifiedgood}
	Let $Z\se X$ be a $k$-codimensional $\Ga$-invariant stratified submanifold with $\Ga$-invariant top stratum $Z_k$. If $Z^\Ga$ has a stratification whose \emph{unique top stratum is $(Z_k)^\Ga$}, then $Z$ is a good $\Ga$-invariant subvariety of codimension type $(k,l)$ where $l=\codim (Z_k)^\Ga$. If additionally $Z$ is a $\Ga$-invariant cycle and $(Z_k)^\Ga$ is connected, then $Z$ is a good $\Ga$-invariant cycle. This will be relevant in the case of real double Schubert varieties, see Theorem \ref{thm:doubleflagcirclespace}.
\end{example}

For good $\Ga$-invariant cycles, the classes $[Z\se X]_\Ga$ and $[Z^\Ga\se X^\Ga]$ are related by the following Lemma.

\begin{lemma}[Excess multiplicity lemma]\label{lemma:mainlemma1}
	Let $\Ga=\U(1)$ and $R=\Q$ be the coefficients of cohomology. Let $X$ be a $\Ga$-manifold and let $Z\se X$ be a good $\Ga$-invariant cycle of codimension type $(k,l)$. Then $Z^\Ga\se X^\Ga$ is a cycle and it has a fundamental class $[Z^\Ga\se X^\Ga]$ satisfying
	\begin{displaymath}
	\xymatrix@R-2pc{
		H_\Ga^*(X) \ar@{->}[r]^-{r}& H_\Ga^*(X^\Ga)\iso H^*(X^\Ga)[u]\otimes H_\Ga^*\\
		[Z\se X]_\Ga\ar@{|->}[r]^{} &w \cdot [Z^\Ga\se X^\Ga]+\deg_\Ga^{<k-l}
	}\end{displaymath}
	where $r=i^*$ is the restriction to $X^\Ga$, $w=e_\Ga(\eta_z)\in H^{k-l}_\Ga$ is the equivariant Euler class of the excess bundle $\eta$ at $z$ for some $z\in Z_{\mathcal{R}}^\Ga$, (see Remark \ref{euler}) and $\deg_\Ga^{<k-l}$ denotes a sum of homogeneous elements whose $H_\Ga^*$-degree is less than $k-l$.
\end{lemma}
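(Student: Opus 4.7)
The strategy is to apply the equivariant excess intersection formula (Proposition \ref{prop:eif}) on the smooth part $Z_{\mathcal{R}}$, analyze the resulting equivariant Euler class, and then lift the identity back across the singular locus using a codimension argument. The coorientations needed to invoke Proposition \ref{prop:eif} are assembled from the coorientation of $Z$ and the canonical complex structures carried by the weight subbundles of the normal and excess bundles at $\Ga$-fixed points, as in Remark \ref{rmk:compatibleort}.

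First I would restrict to $X' := X \su \Si$, so that $Z_{\mathcal{R}}$ is a $\Ga$-invariant smooth submanifold of $X'$ meeting $X'^\Ga = X^\Ga \su \Si^\Ga$ cleanly in $Z_{\mathcal{R}}^\Ga$. Proposition \ref{prop:eif} then yields
$$ r\bigl([Z_{\mathcal{R}}\se X']_\Ga\bigr) = g_!\bigl(e_\Ga(\eta)\bigr) \in H^*_\Ga(X'^\Ga, X'^\Ga \su Z_{\mathcal{R}}^\Ga). $$
Since $\Ga$ acts trivially on $Z_{\mathcal{R}}^\Ga$, the excess bundle decomposes as $\eta = \bigoplus_n \eta_n$ into weight subbundles, each carrying a canonical complex structure. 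Expanding $e_\Ga(\eta) = \sum_{j=0}^{(k-l)/2} a_j u^j$ with $a_j \in H^{k-l-2j}(Z_{\mathcal{R}}^\Ga)$, the leading coefficient $a_{(k-l)/2} \in H^0(Z_{\mathcal{R}}^\Ga)$ is the locally constant function recording the product of weights of $\eta_z$, which by the constant excess multiplicity hypothesis is globally the scalar $\mu_Z$. Hence
$$ g_!\bigl(e_\Ga(\eta)\bigr) = \mu_Z u^{(k-l)/2} [Z_{\mathcal{R}}^\Ga \se X'^\Ga] + \sum_{j<(k-l)/2} g_!(a_j) u^j. $$

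Next I would lift this back to $(X^\Ga, X^\Ga \su Z^\Ga)$. The equivariant class $[Z \se X]_\Ga$ exists by the discussion preceding Definition \ref{def:goodcycle}, and $r([Z]_\Ga) \in H_\Ga^*(X^\Ga, X^\Ga \su Z^\Ga)$ restricts to the right-hand side above under the inclusion of pairs. Writing $r([Z]_\Ga) = \sum_j c_j u^j$ with $c_j \in H^{k-2j}(X^\Ga, X^\Ga \su Z^\Ga)$, the long exact sequence of the triple $(X^\Ga, X'^\Ga, X^\Ga \su Z^\Ga)$, combined with $\codim \Si^\Ga \geq l+1$ (which forces $H^l(X^\Ga, X^\Ga \su \Si^\Ga) = 0$), shows that the restriction $H^l(X^\Ga, X^\Ga \su Z^\Ga) \to H^l(X'^\Ga, X'^\Ga \su Z_{\mathcal{R}}^\Ga)$ is injective. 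Thus $c_{(k-l)/2}$ is the unique lift of $\mu_Z [Z_{\mathcal{R}}^\Ga \se X'^\Ga]$, and I would set $[Z^\Ga \se X^\Ga] := \mu_Z^{-1} c_{(k-l)/2}$, well-defined since $\mu_Z \in \Q^\times$ by Remark \ref{slice}. Since this class restricts to $[Z_{\mathcal{R}}^\Ga \se X'^\Ga]$, and hence to a generator of $H^l(D_x, D_x \su x)$ at every regular point of $Z^\Ga$, it qualifies as a fundamental class by Definition \ref{def:cycle}, simultaneously showing that $Z^\Ga$ is a cycle and yielding the stated formula.

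The step I expect to require the most care is this final lift. The existence clause of Proposition \ref{prop:cycleexistenceuniqueness} would demand $\codim \Si^\Ga \geq l+2$, a bound stronger than what the good cycle hypothesis provides. Working equivariantly circumvents this obstruction: $[Z^\Ga \se X^\Ga]$ is not constructed by extension but is extracted as the top $u$-degree coefficient of the already-existing equivariant class $r([Z]_\Ga)$, and the weaker codimension bound $\codim \Si^\Ga \geq l+1$ built into the definition of topological subvariety is exactly what is required to pin down this coefficient uniquely.
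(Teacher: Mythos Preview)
Your proposal is correct and follows essentially the same approach as the paper's own proof: first apply the equivariant excess intersection formula on the smooth locus to identify the leading $u$-coefficient of $g_!(e_\Ga(\eta))$ as $\mu_Z[U^\Ga]$, then use the commutative square relating $r$ on $(X,X\su Z)$ and $(X\su\Si,X\su Z)$ together with the vanishing $H^i(X^\Ga,X^\Ga\su Z^\Ga)=0$ for $i<l$ to lift back and define $[Z^\Ga]:=\mu_Z^{-1}\xi_l$. Your final paragraph on why the weaker bound $\codim\Si^\Ga\geq l+1$ suffices (because one extracts a coefficient of an existing equivariant class rather than extending a nonequivariant one) is a helpful gloss on exactly the mechanism the paper uses implicitly.
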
	
\begin{proof}
	First, consider the case when $Z\se X$ is a smooth $\Ga$-invariant cycle. Then there is a commutative diagram of equivariant inclusions:
	\begin{displaymath}
	\xymatrix{
		Z^\Ga \ar@{^{(}->}[r]^{j}\ar@{^{(}->}[d]_{g}& Z\ar@{^{(}->}[d]^f\\
		X^\Ga\ar@{^{(}->}[r]^{i} &X
	}\end{displaymath}
	By Proposition \ref{prop:eif}, $Z^\Ga=Z\cap X^\Ga$ is a clean intersection and therefore has an excess bundle $\eta$. Since $X^\Ga\inj X$ and $Z^\Ga\inj Z$ are cooriented (their normal bundle is complex via the $\Ga$-action), and by Remark \ref{rmk:compatibleort}, this induces a compatible orientation of $g$ and therefore on $\eta$. The equivariant Euler class of any equivariant bundle $E\to X$ of real rank $n$ over $X$ connected and fixed by $\Ga$ can be written as $e_\Ga(E)=e_\Ga(E_x)+\deg_\Ga^{<n}$. Applying this to $\eta\to Z^\Ga$,
	$$ e_\Ga(\eta)=w +\deg_\Ga^{<k-l}$$
	and by the equivariant EIF and using that $g_!$ is a $H_\Ga^*$-module homomorphism
	$$i^*f_!1=g_!(e_\Ga(\eta))=w\cdot g_!1+\deg_\Ga^{<k-l},$$
	proving the claim for $Z$ smooth.\\
	
	Now let $Z=U\amalg \Sigma$ be a good $\Ga$-invariant cycle of $X$ of codimension $(k,l)$ with $\Ga$-invariant fat nonsingular set $U$. Consider the following commutative diagram:
	\begin{displaymath}
	\xymatrix{
		H^*_\Ga(X,X\su Z) \ar@{->}[r]^{s}\ar@{->}[d]_{r}& H^*_\Ga(X\su \Sigma,X \su Z)\ar@{->}[d]^{r_U}\\
		H^*(X^\Ga,X^\Ga\su Z^\Ga)[u] \ar@{->}[r]^-{s_U} &H^*(X^\Ga\su \Sigma^\Ga,X^\Ga \su Z^\Ga)[u]
	}\end{displaymath}
Since $Z^\Ga\se X^\Ga$ is an $l$-codimensional singular subvariety, by definition, $H^i(X^\Ga,X^\Ga\su Z^\Ga)=0$ for $i<l$. So one has
	$$ r[Z]_\Ga=\sum_{i=0}^{(k-l)/2} \xi_{k-2i} u^i$$
for $\xi_i\in H^{2i}(X^\Ga,X^\Ga\su Z^\Ga)$, where $k-l$ is even. By commutativity of the diagram,
	\begin{equation}\label{eq:restriction}
	\sum_{i=0}^{(k-l)/2} s_U(\xi_{k-2i}) u^i= s_U r[Z]_\Ga=r_U s[Z]_\Ga=r_U[U]_\Ga = w \cdot [U^\Ga]+\deg_\Ga^{<k-l}
	\end{equation}
	
for a fundamental class $[U^\Ga]$, by the first case, since $U\inj X\su \Si$ is smooth. Therefore $s_U(\xi_l)u^{(k-l)/2} =w\cdot [U^\Ga]$. Write $w=\mu u^{(k-l)/2}$, then $[Z^\Ga]:=\xi_l/\mu$ is a fundamental class with the required property ($\mu\neq 0$ by Remark \ref{slice}).
\end{proof}
\begin{remark}\label{rmk:excessweightlemma}\mbox{}
	\begin{itemize}
		\item[i)] If $R=\Z$, then the proof also shows that equation \eqref{eq:restriction} holds, but then $\xi_l$ might not be divisible by $\mu$, and therefore $Z^\Ga\se X^\Ga$ is not necessarily a cycle.
		\item[ii)] Let $Z\se X$ be a good $\Ga$-invariant subvariety and a cycle. If $W(\nu(X^\Ga\se X))$ consists of a single weight $\la$, then the excess multiplicity is $\la^{d/2}$, independently of $z\in Z_{\mathcal{R}}^\Ga$. Therefore $Z$ is a \emph{good} $\Ga$-invariant cycle.
		\item[iii)] In the proof we used that $0\not\in W(\nu(X^\Ga\inj X))$, which always holds for $\Ga=\U(1)$ as indicated in Remark \ref{slice}.
		For $\Ga=\Sp(1)$, this is not necessarily true; for example using the isomorphism $\R^4\cong\HH$ we get an $\Sp(1)$-action on $ \Gr_{4k}(\R^{4n})$ with fixed point set $\Gr_k(\HH^n)$ whose excess multiplicity is 0. If one assumes $\mu_Z\neq 0$, and that $Z^\Ga\se X^\Ga$ is a cycle, the Lemma also holds for $\Ga=\Sp(1)$, $D=4$.
	\end{itemize}
\end{remark}

\subsection{Halving cycles}

Adapting the proof of van Hamel \cite[Cor 1.3(ii)]{VanHamel2007} to the context of circle spaces shows that $\ka[Z]=\mu[Z^\Ga]$, where $\mu$ is an undetermined constant. This constant is explicitly described as the excess multiplicity as described in Section \ref{sec:eif}, and allows us to generalize the Borel-Haefliger theorem to $\U(1)$ and $\Sp(1)$-actions.

\begin{definition}\label{def:halvingcycle}\index{halving cycle}
	Let $X$ be a $\Ga$-manifold. A good $\Ga$-invariant cycle $Z\se X$ (Definition \ref{def:goodcycle}) of codimension type $(2k,k)$ is called a \emph{halving cycle}.
\end{definition}
For example, in the algebraic case, complexified cycles $Z_\C$, are $\Z_2$-halving cycles over $R=\F_2$.
\begin{remark}\label{rmk:halvingdim4}
	For $\Ga=\U(1)$, a halving cycle $Z$ has codimension divisible by 4. Indeed, the codimension of $Z\se X$ has the same parity as the codimension of $Z^\Ga\se X^\Ga$, by Remark \ref{slice}).
\end{remark}

Now we are ready to generalize Theorem \ref{pr:conj-ew} of Borel and Haefliger to halving manifolds:
\begin{theorem}\label{prop:halvingcycle}
	Let $(\Ga,R)$ be $(\Z_2,\F_2)$ or $(\U(1),\Q)$. Let $X$ be a halving manifold, and $Z\se X$ be a halving cycle. Then
	$$ \si[Z]=[Z]_\Ga,\qquad \ka[Z]=\mu_Z[Z^\Ga],$$
	where $\mu_Z$ is the excess multiplicity of $Z$ (Definition \ref{def:excessmultiplicity}).
\end{theorem}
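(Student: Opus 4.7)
The plan is to deduce both identities together by applying the Degree Lemma \ref{lemma:degreelemma} to the equivariant fundamental class $[Z]_\Ga \in H^{2k}_\Ga(X)$, using the Excess Multiplicity Lemma \ref{lemma:mainlemma1} to control the top $u$-coefficient of the restriction $r[Z]_\Ga$. Once $[Z]_\Ga$ is identified with $\si[Z]$, comparison of top $u$-coefficients via the restriction equation \eqref{eq:restrictioneqn} yields $\ka[Z]=\mu_Z[Z^\Ga]$ at once.

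First I would invoke Lemma \ref{lemma:mainlemma1} for the halving cycle $Z$, whose codimension type is $(2k,k)$, so the excess bundle $\eta$ has real rank $k$. By Remark \ref{euler}, at any point $z\in Z^\Ga_{\mathcal R}$ one has $w=e_\Ga(\eta_z)=\mu_Z u^{k/D}$; here the exponent $k/D$ is an integer---trivially when $\Ga=\Z_2$ (where $D=1$), and by Remark \ref{rmk:halvingdim4} when $\Ga=\U(1)$ (where $D=2$ and $k$ is forced even). Composing the refined formula of Lemma \ref{lemma:mainlemma1} with the natural map from refined to non-refined cohomology, I obtain in $H^*(X^\Ga)[u]$ the identity
\[ r[Z]_\Ga = \mu_Z [Z^\Ga]\, u^{k/D} + (\text{terms of strictly smaller } u\text{-degree}). \]
The stated Lemma \ref{lemma:mainlemma1} is formulated only for $\Ga=\U(1)$, but the same Quillen-type excess intersection argument works verbatim in the $(\Z_2,\F_2)$ setting (this is essentially van Hamel's calculation in \cite{VanHamel2007}), so the displayed formula is valid in both cases.

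Next I apply the Degree Lemma \ref{lemma:degreelemma} to $[Z]_\Ga\in H^{2D(k/D)}_\Ga(X)$. If $\mu_Z[Z^\Ga]\neq 0$, the displayed formula shows $\deg_u\bigl(r[Z]_\Ga\bigr)=k/D$, hence $[Z]_\Ga=\si(y)$ for some $y\in H^{2k}(X)$; applying $\rho$ and using $\rho\si=\id$ gives $y=\rho[Z]_\Ga=[Z]$, so $\si[Z]=[Z]_\Ga$. In the degenerate case $\mu_Z[Z^\Ga]=0$ (which, since $\mu_Z\neq 0$ by Remark \ref{slice}, forces $[Z^\Ga]=0$ in $H^*(X^\Ga)$), the remark following Lemma \ref{lemma:degreelemma} forces $[Z]_\Ga=0$, whence $[Z]=\rho[Z]_\Ga=0$, and both identities hold trivially. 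In either case, applying $r$ to $\si[Z]=[Z]_\Ga$ and using the restriction equation \eqref{eq:restrictioneqn} on the left with the displayed formula on the right,
\[ \ka[Z]\,u^{k/D} + (\text{lower}) \;=\; r\si[Z] \;=\; r[Z]_\Ga \;=\; \mu_Z[Z^\Ga]\,u^{k/D} + (\text{lower}), \]
and comparison of the top $u$-coefficients yields $\ka[Z]=\mu_Z[Z^\Ga]$. The one nontrivial point is the $\Ga=\Z_2$ version of Lemma \ref{lemma:mainlemma1}; apart from this, the argument is a clean bookkeeping combination of the Excess Multiplicity Lemma, the Degree Lemma, and the definition of the cohomology frame.
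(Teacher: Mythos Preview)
Your proposal is correct and follows essentially the same route as the paper's proof: apply the excess multiplicity lemma (Lemma~\ref{lemma:mainlemma1}) to see that $r[Z]_\Ga$ has $u$-degree at most the half-codimension with leading coefficient $\mu_Z[Z^\Ga]$, then invoke the Degree Lemma (Lemma~\ref{lemma:degreelemma}) to conclude $[Z]_\Ga\in\im\si$, identify the preimage as $[Z]$ via $\rho$, and finally read off $\ka[Z]=\mu_Z[Z^\Ga]$ from the top $u$-coefficient. The only cosmetic differences are your indexing convention (codimension $(2k,k)$ with exponent $k/D$, whereas the paper writes $(2Dk,Dk)$ with exponent $k$), and that you explicitly separate out the degenerate case $[Z^\Ga]=0$, which the paper absorbs into a single application of the Degree Lemma.
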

\begin{proof}
	Set $\codim Z=2Dk$. By the excess multiplicity lemma (Lemma \ref{lemma:mainlemma1})
	\begin{equation}\label{eq:excessrestriction}
	r[Z]_\Ga=w\cdot [Z^\Ga]+\deg_\Ga^{<Dk},
	\end{equation}
	where $\deg_\Ga^{<Dk}$ denotes a polynomial in $u$ of degree less than $k$ and $w=e_\Ga(\eta_z)$.	By the degree lemma (Lemma \ref{lemma:degreelemma}) $[Z]_\Ga=\si(x)$ for some $x\in H^{2Dk}(X)$. Since $$[Z]=\rho[Z]_\Ga=\rho\si(x)=x,$$ $[Z]_\Ga=\si(x)=\si[Z]$. Restricting, using the definition of $\ka$ \eqref{eq:kadef} and \eqref{eq:excessrestriction},
	$$ \ka[Z]u^{k}+\deg_\Ga^{<Dk}=r\si[Z]=r[Z]_\Ga=w\cdot [Z^\Ga] +\deg_\Ga^{<Dk}$$
	therefore $\ka[Z]=\mu_Z\cdot [Z^\Ga]$, where $w=\mu_Z u^k$.
\end{proof}
\begin{remark}\label{rmk:Sp1halvingcycle}
	The lemma also holds for the halving pair $(\Ga,R)=(\Sp(1),\Q)$, if one assumes that $Z^\Ga\se X^\Ga$ is a cycle and that $w\neq 0$, see Remark \ref{rmk:excessweightlemma} iii).
\end{remark}

\subsubsection{Coefficients of the restriction equation}
Let us return for a moment to the case of conjugation spaces; let $X$ be a conjugation space. Franz and Puppe \cite{FranzPuppe2006} determined the coefficients of the restriction equation as Steenrod squares:
\begin{equation}\label{eq:FranzPuppe}
 r(\si(\al)) = \sum_{i=0}^d \Sq^i \al\cdot u^{d-i}=:\Sq^u(\al),
\end{equation}
for $\al\in H^{2d}(X)$. Together with the theorem of Van Hamel \cite{VanHamel2007}, this proves the topological version of a classical theorem of Chow \cite{Chow1963}, that $[Z]|_{X^\Ga}=[Z^\Ga]^2$. Using Proposition \ref{prop:halvingcycle}, we obtain a simple proof of a weaker version of \eqref{eq:FranzPuppe}, namely in the algebraic case. For a similar theorem, see \cite[Theorem 1.18]{BenoistWittenberg2018}. 
\begin{proposition}\label{prop:Steenrod}
	Let $X$ be {the complexification of a real algebraic variety which is smooth, and let $Z\se X$ be a complexified subvariety which is a smooth cycle}. Then
	$$r\si [Z]=\Sq^u(\ka[Z])$$
\end{proposition}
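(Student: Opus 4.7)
The plan is to combine Proposition \ref{prop:halvingcycle} with the $\Z_2$-equivariant excess intersection formula (the $\Z_2$-analogue of Proposition \ref{prop:eif}, Quillen's formula being valid for any compact group) and Wu's classical formula expressing Steenrod squares of submanifold classes as Gysin pushforwards of normal Stiefel--Whitney classes.

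First, since $Z$ is a complexified smooth cycle it is in particular a $\Z_2$-halving cycle, and every non-trivial real $\Z_2$-weight contributes a factor of $1 \in \F_2$, so the excess multiplicity is $\mu_Z = 1$. Proposition \ref{prop:halvingcycle} then gives $\si[Z] = [Z]_\Ga$ and $\ka[Z] = [Z^\Ga]$; set $d := \codim_\C(Z \se X) = \codim_\R(Z^\Ga \se X^\Ga)$, so that $[Z^\Ga] \in H^d(X^\Ga;\F_2)$. Applying the excess intersection formula to the smooth inclusions $f : Z \incl X$ and $g : Z^\Ga \incl X^\Ga$ yields
\[
r\si[Z] = i^* f_!(1) = g_!\bigl(e_\Ga(\eta)\bigr),
\]
where $\eta$ is the excess bundle of the clean intersection $Z^\Ga = Z \cap X^\Ga$.

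The crucial step is to evaluate $e_\Ga(\eta)$. Complex conjugation splits the normal bundle of $Z \se X$ along $Z^\Ga$ as $\nu_\R \oplus i\nu_\R$, where $\nu_\R := \nu(Z^\Ga \se X^\Ga)$; the excess bundle is canonically identified with $i\nu_\R$, that is, $\nu_\R$ equipped with the non-trivial $\Z_2$-action $-1$ on fibers. Consequently $B_\Ga \eta \iso \gamma \otimes \pi^*\nu_\R$, where $\gamma \to B\Z_2 = \RP^\infty$ is the tautological line bundle with $w_1(\gamma) = u$. The splitting principle (with Stiefel--Whitney roots $x_1,\ldots,x_d$ of $\nu_\R$) then gives
\[
e_\Ga(\eta) = w_d(\gamma \otimes \pi^*\nu_\R) = \prod_{i=1}^d (x_i + u) = \sum_{j=0}^d w_{d-j}(\nu_\R)\, u^j.
\]

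Finally, invoke Wu's formula: for the smooth embedding $g$ between compact manifolds, $\Sq^i([Z^\Ga]) = g_!(w_i(\nu_\R))$ (a direct consequence of $\Sq(g_!\alpha) = g_!(\Sq\alpha \cdot w(\nu_\R))$ applied to $\alpha = 1$). Pushing the expression for $e_\Ga(\eta)$ forward term-by-term,
\[
r\si[Z] = \sum_{j=0}^d g_!\bigl(w_{d-j}(\nu_\R)\bigr)\, u^j = \sum_{j=0}^d \Sq^{d-j}([Z^\Ga])\, u^j = \Sq^u([Z^\Ga]) = \Sq^u(\ka[Z]),
\]
which is the claimed identity. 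The only genuine bookkeeping obstacle I anticipate is the canonical identification of $\eta$ with $i\nu_\R$ and the resulting Borel-construction description $\gamma \otimes \pi^*\nu_\R$; once that is in hand, the proof reduces to matching the $u$-expansion of $e_\Ga(\eta)$ with the definition of $\Sq^u$.
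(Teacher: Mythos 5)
Your proposal is correct and follows essentially the same route as the paper's proof: the $\Z_2$-equivariant excess intersection formula, the identification of the excess bundle as $\nu(Z^\Ga\se X^\Ga)\otimes_\R i\R$ (so that its equivariant Euler class is the $u$-homogenized total Stiefel--Whitney class of $\nu(Z^\Ga\se X^\Ga)$), and Thom's theorem $g_!(w_*(\nu))=\Sq[Z^\Ga]$. The only difference is expository: you spell out the splitting-principle computation of $e_\Ga(\gamma\otimes\pi^*\nu_\R)$ and the reduction $\mu_Z=1$ over $\F_2$, which the paper leaves implicit.
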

\begin{proof}
	The proof can be summarized as
	$$ [Z]_\Ga|_{X^\Ga}\operatornamewithlimits{=\joinrel=}^{1)}g_!(e_\Ga(\eta))\operatornamewithlimits{=\joinrel=}^{2)}g_!(w_*^u(\nu))\operatornamewithlimits{=\joinrel=}^{3)}\Sq[Z^\Ga],$$
	where $g:Z^\Ga\inj X^\Ga$ and $w_*^u$ denotes the total homogenized Stiefel-Whitney class. 1) is the excess intersection formula. Since $Z$ is a complexification,
	$$\nu(Z\se X)|_{Z^\Ga}=\nu(Z^\Ga\se X^\Ga)\otimes_\R \C$$
	holds. Then the excess bundle equivariantly is $\eta=\nu(Z^\Ga\se X^\Ga)\otimes_\R i\R$ where $i\R$ denotes the trivial line bundle with nontrivial $\Z_2$-action. This implies 2): $e_\Ga(\nu\otimes_\R i\R)=w_*^u(\nu)$ (the total Stiefel-Whitney class homogenized by powers of $u$). Finally, 3) is the content of Thom's theorem \cite{Thom1950} saying that for $i:Z\inj X$: $i_!(w_*(\nu))=\Sq[Z]$ holds.
\end{proof}

It would be nice to have a similar description of the coefficients for the case of circle spaces, however there are no nontrivial stable rational cohomology operations, so this result has no direct generalization.
\subsubsection{Poincar\'e duality}
In this section we give some sufficient conditions for a $\Ga$-space $X$ to be a halving space.

\begin{definition}\label{def:almosthalvingspace}\index{halving space!almost halving space}
	We say that a $\Ga$-space $X$ is \emph{almost a halving space}, if $X$ has nonzero cohomology only in degrees $2Di$ and $X$ is equivariantly formal with a Leray-Hirsch section $\si:H^*(X)\to H_\Ga^*(X)$ satisfying a weaker form of the degree condition:
	\begin{itemize}
		\item[(DC\textsuperscript{--})] for all $x\in H^{2Di}(X)$, $r\si(x)$ is a polynomial of degree \emph{at most} $i$ where $r:H_\Ga^*(X)\to H^*(X^\Ga)[u]$ is the restriction map, $u\in H_\Ga^D$.
	\end{itemize}
\end{definition}
To put it simply, (DC\textsuperscript{--}) allows the $u$-degree of $r\si(x)$ to be smaller than $i$. The following lemma can be found (implicitly) in van Hamel \cite{VanHamel2007} for the case of conjugation spaces and its proof is the same. For the analogue of van Hamel's theorem in the case of $(\Ga,R)=(\U(1),\Q)$ we also have to assume Poincar\'e duality/orientability:

\begin{lemma}[Injectivity lemma]\label{lemma:injectivity}
	Let $(\Ga,R)=(\U(1),\Q)$, $H_\Ga^*\iso \Q[u]$, $u\in H_\Ga^D$, $D=2$. Let $X$ be a smooth $\Ga$-manifold which is almost a halving space with $\si$. If $X$ is compact, orientable and $\dim X\geq 2\dim X^\Ga$, then $X$ satisfies the degree condition \emph{(DC)}. In particular, $X$ is a halving space with the same $\si$, and $\ka$ defined by \eqref{eq:kadef} is an isomorphism.
\end{lemma}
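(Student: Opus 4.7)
The plan is to upgrade the weak degree condition (DC$^-$) to (DC) by exploiting Poincar\'e duality on $X$; once injectivity of $\ka$ is established, a dimension count will give the isomorphism statement. Write $d = \dim X$ and $d' = \dim X^\Ga$. Since $H^*(X;\Q)$ is concentrated in degrees divisible by $2D = 4$ and $X$ is compact orientable, $d = 4m$ for some nonnegative integer $m$, and the hypothesis reads $d' \leq 2m$.

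First I record two consequences of equivariant formality of $X$. The standard localization theorem for $\Ga = \U(1)$-actions yields that $r\colon H_\Ga^*(X;\Q) \to H_\Ga^*(X^\Ga;\Q)$ is injective and becomes an isomorphism after inverting $u$; comparing ranks of the corresponding free $\Q[u]$-modules gives $\dim_\Q H^*(X) = \dim_\Q H^*(X^\Ga)$. Moreover, since $u$ has even degree and $H^*(X;\Q)$ is concentrated in even degrees, $H_\Ga^*(X;\Q)[u^{-1}]$ sits in even degrees only, and the localization isomorphism then forces $H^{\operatorname{odd}}(X^\Ga;\Q) = 0$.

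The main step is the injectivity of $\ka$. Suppose for contradiction that $x \in H^{4i}(X;\Q)$ is nonzero with $\ka(x) = 0$. By Poincar\'e duality on $X$ there exists $y \in H^{4(m-i)}(X;\Q)$ such that $xy \neq 0$ in $H^{4m}(X;\Q)$. Condition (DC$^-$) gives $\deg_u r\si(y) \leq m-i$, while $\ka(x) = 0$ together with (DC$^-$) gives $\deg_u r\si(x) \leq i-1$. Since $r$ is a ring map,
\[
r(\si(x)\si(y)) \;=\; r\si(x)\cdot r\si(y)
\]
lies in total degree $4m$ of $H^*(X^\Ga)[u]$ and has $u$-degree at most $m-1$. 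Writing this product as $\sum_j q_j u^j$ with $q_j \in H^{4m-2j}(X^\Ga)$, the bound $\dim X^\Ga \leq 2m$ forces any nonzero $q_j$ to satisfy $4m - 2j \leq 2m$, i.e.\ $j \geq m$. Together with $j \leq m-1$ this forces every $q_j$ to vanish, so $r(\si(x)\si(y)) = 0$; injectivity of $r$ then gives $\si(x)\si(y) = 0$ in $H_\Ga^{4m}(X)$. Applying the ring map $\rho$ and using $\rho\si = \id$ yields $xy = 0$, contradicting the choice of $y$.

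Finally, degree-by-degree injectivity of $\ka$ gives $\dim H^{4i}(X) \leq \dim H^{2i}(X^\Ga)$; summing over $i$ and combining with the two equalities from the first paragraph forces equality in each degree, so $\ka$ is an isomorphism. Consequently, for every nonzero $x \in H^{4i}(X)$ the leading $u$-coefficient of $r\si(x)$ is $\ka(x) \neq 0$, which is exactly (DC). The main obstacle is the degree bookkeeping in the middle step: one must simultaneously exploit the upper bound on $\deg_u$ provided by (DC$^-$) on each factor \emph{and} the lower bound $j \geq m$ coming from the dimension hypothesis $\dim X^\Ga \leq \dim X/2$ in order to force the product to vanish. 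Without either half the argument the product $r(\si(x)\si(y))$ can have nonzero components and no contradiction arises.
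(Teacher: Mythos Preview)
Your argument is correct and follows essentially the same route as the paper's sketch: Poincar\'e duality to produce a dual class $y$, the (DC$^-$) bound on $u$-degrees together with the dimension hypothesis $\dim X^\Ga\le \tfrac12\dim X$ to force the low-degree terms of $r(\si(x)\si(y))$ to vanish, and injectivity of $r$ from the localization theorem. Your contradiction formulation is a harmless rephrasing of the paper's direct statement that $r(\si(x)\si(y))=\ka(x)\ka(y)u^m$; you also spell out the surjectivity of $\ka$ via the rank equality $\dim_\Q H^*(X)=\dim_\Q H^*(X^\Ga)$ and the vanishing of $H^{\mathrm{odd}}(X^\Ga)$, which the paper leaves implicit.
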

\begin{proof}We only give a brief sketch of the main idea found in \cite{VanHamel2007}, for further details see \cite{thesis}. Let $x\in H^{2Dk}(X)$. By Poincar\'e duality there exists $y\in H^{2D(n-k)}(X)$ satisfying $xy\neq0$, here $\dim X= 2Dn$. Then $\si(x)\si(y)\neq 0$; degree considerations from (DC\textsuperscript{--}) show that $r(\si(x)\si(y))=\ka(x)\ka(y)u^n$. The localization theorem (restriction to the fixed-point set $r:H_\Ga^*(X)\to H^*(X^\Ga)[u]$ is an isomorphism after inverting $u$) implies that restriction to the fixed point set $r:H^*_\Ga(X)\to H^*(X^\Ga)[u]$ is injective, therefore $\ka(x)\neq0$.
\end{proof}
\begin{remark}\label{rmk:injectivitylemma} \mbox{}

	\begin{itemize}
		\item[i)] More generally, the lemma holds for \emph{$\Q$-Poincar\'e duality spaces} \cite[Definition 5.1.1.]{AlldayPuppe1993} if one replaces $\dim(X)$ with formal dimension $\fd(X)$. ($X$ is a $\Q$-Poincar\'e duality space if $H^{\operatorname{top}}(X;\Q)\iso \Q$ and the pairing $$H^k(X)\otimes H^{\operatorname{top}-k}(X)\to H^{\operatorname{top}}(X)$$ is perfect; $\fd(X):=\operatorname{top}$.) $\Q$-Poincar\'e duality is satisfied by a larger class of spaces, which need not be orientable nor compact. {Note that the formal dimension of a manifold $X$ can be smaller than the dimension of $X$ as a manifold. For example, $\RP^{2n}$ is a $\Q$-Poincar\'e duality space with formal dimension 0. More generally, all real partial flag manifolds $\Fl_\D^\R$ are $\Q$-Poincar\'e duality spaces \cite{realflagpaper}.} This allows us to extend the class of circle space examples to the case of nonorientable Grassmannians, $\Gr_k(\R^n)$ for $n$ odd. For the sake of conciseness we didn't add the extra conditions to the lemma.
		\item[ii)] For $(\Ga,R)=(\Z_2,\F_2)$, the same lemma holds, without having to assume orientability. Every manifold satisfies $\F_2$-Poincar\'e duality -- indeed, van Hamel's original proof \cite{VanHamel2007} does not assume orientability.
		\item[iii)] The lemma can also be generalized to $\Ga=\Sp(1)$, if one makes the additional assumption that the restriction to the fixed point set is an isomorphism after localization. 	
	\end{itemize}
\end{remark}

\section{The generalized Borel-Haefliger theorem}\label{sec:BHI}
\noindent Using the notions of good invariant cycles and excess multiplicity (Definitions \ref{def:goodcycle} and \ref{def:excessmultiplicity}) we can state the generalized Borel-Haefliger theorem:
\begin{theorem}[Generalized Borel-Haefliger theorem]\label{thm:BHI}\index{Borel-Haefliger theorem}
	Let $(\Ga,R)$ be $(\Z_2,\F_2)$ or $(\U(1),\Q)$ and $H^*(B\Ga;R)\iso R[u]$, $u\in H_\Ga^D$. Let $X$ be a smooth compact $\Ga$-manifold, orientable if $R=\Q$. Assume that $H^*(X)$ is nonzero only in degrees divisible by $D$ and has a basis of halving cycles: good $\Ga$-invariant cycles $Z_i\se X$ of codimension type $(2Dk_i,Dk_i)$. Then $X$ is a halving space with cohomology frame
	\begin{equation} \label{eq:BHI}
	\si[Z_i]=[Z_i]_\Ga,\qquad \ka[Z_i]=\mu_i [Z_i^\Ga]
	\end{equation}	
	where $\mu_i\neq 0$ is the excess multiplicity of $Z_i\se X$.
\end{theorem}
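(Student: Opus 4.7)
The plan is to build the cohomology frame $(\ka,\si)$ in two stages. First, construct a Leray--Hirsch section $\si$ satisfying only the weaker degree condition (DC\textsuperscript{--}) of Definition \ref{def:almosthalvingspace}, exhibiting $X$ as an almost-halving space. Second, upgrade (DC\textsuperscript{--}) to the full condition (DC) via the injectivity lemma (Lemma \ref{lemma:injectivity}). No new geometric input is needed beyond the two structural lemmas of Section \ref{sec:geometry}.

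For the first stage, observe that since each $Z_i\se X$ is a $\Ga$-invariant cycle and $\Ga$ is connected (or, for $\Ga=\Z_2$, by mod-2 uniqueness from Proposition \ref{prop:cycleexistenceuniqueness}), the class $[Z_i]$ admits a unique equivariant lift $[Z_i]_\Ga\in H_\Ga^{2Dk_i}(X)$ as discussed before Definition \ref{def:goodcycle}. Define $\si$ on the basis by $\si[Z_i]:=[Z_i]_\Ga$ and extend $R$-linearly. Since the $[Z_i]_\Ga$ restrict via $\rho$ to a basis of $H^*(X)$, the Leray--Hirsch theorem applies: $H_\Ga^*(X)$ is free over $H_\Ga^*$ on these classes, so $\si$ is a bona fide Leray--Hirsch section and $X$ is equivariantly formal. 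Applying Lemma \ref{lemma:mainlemma1} to each halving cycle $Z_i$ of codimension type $(2Dk_i,Dk_i)$ yields
\begin{equation*}
r\si[Z_i] \;=\; \mu_i\cdot [Z_i^\Ga]\cdot u^{k_i} \;+\; \deg_\Ga^{<Dk_i},
\end{equation*}
where $\mu_i\neq 0$ is the excess multiplicity (nonvanishing by the slice theorem, Remark \ref{slice}). In particular $r\si[Z_i]$ is a polynomial in $u$ of degree at most $k_i$, so (DC\textsuperscript{--}) holds on the basis, hence on all of $H^*(X)$ by linearity. Thus $X$ is almost a halving space.

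For the second stage, I invoke Lemma \ref{lemma:injectivity}. Compactness and (for $R=\Q$) orientability are assumed in the theorem; the dimension hypothesis $\dim X\geq 2\dim X^\Ga$ is automatic, because the top cohomology class of $X$ is itself represented by a halving cycle in the basis, of codimension type $(\dim X,\dim X/2)$, which forces $\dim X=2\dim X^\Ga$. For $(\Ga,R)=(\Z_2,\F_2)$ the orientability assumption is unnecessary (Remark \ref{rmk:injectivitylemma}(ii)). The lemma upgrades (DC\textsuperscript{--}) to (DC) and shows that $\ka$ is an isomorphism. Reading off the leading $u^{k_i}$ coefficient of the displayed equation then gives $\ka[Z_i]=\mu_i[Z_i^\Ga]$ and $\si[Z_i]=[Z_i]_\Ga$, which is the stated cohomology frame.

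The genuine obstacle is not in the final assembly but in ensuring that the hypotheses of Lemma \ref{lemma:mainlemma1} really hold for each $Z_i$: that $[Z_i]_\Ga$ exists, that $\mu_i$ is a well-defined nonzero scalar independent of base point in $Z_i^\Ga$, and that $Z_i^\Ga$ carries an honest fundamental class. All of this is packaged into the assumption that each $Z_i$ is a \emph{good} $\Ga$-invariant cycle combined with the slice theorem, so the proof reduces to a clean two-step application of Lemmas \ref{lemma:mainlemma1} and \ref{lemma:injectivity}.
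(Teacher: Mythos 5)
Your proposal is correct and follows essentially the same route as the paper's proof: define $\si$ by sending each basis class $[Z_i]$ to its equivariant lift $[Z_i]_\Ga$, use the excess multiplicity lemma (Lemma \ref{lemma:mainlemma1}) to verify (DC\textsuperscript{--}) and identify the leading coefficient as $\mu_i[Z_i^\Ga]$, then upgrade to (DC) via the injectivity lemma, with the dimension hypothesis $\dim X=2\dim X^\Ga$ supplied exactly as you say by the point class being among the halving cycles. Your write-up is somewhat more explicit than the paper's about the construction of $\si$ and the role of equivariant formality, but the logical skeleton is identical.
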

\begin{proof}
	Since $H^*(X)$ is generated by halving cycles, it is $\Ga$-equivariantly formal. By the excess weight lemma (Lemma \ref{lemma:mainlemma1}),
	$$r\si[Z_i]=\ka[Z_i] u^{k_i}+\deg_\Ga^{<Dk_i},\qquad \text{for all }i,$$ where $\deg_\Ga^{<Dk_i}$ denotes a polynomial in $u$ of degree less than $k_i$.  Since $[Z_i]$ form a basis of $H^*(X)$, $X$ is almost a halving space, cf.\ Definition \ref{def:almosthalvingspace}. Since $X$ is a compact orientable manifold, the class of a point is a cycle which is represented by a halving cycle by assumption, so $\dim X=2\dim X^\Ga$. Therefore $X$ is a halving space with cohomology frame $(\ka,\si)$ by the injectivity lemma (Lemma \ref{lemma:injectivity}).
\end{proof}
\begin{remark}\label{rmk:BHI}\mbox{}

	\begin{itemize}
		\item[i)] The orientability assumption is not essential, see Remark \ref{rmk:injectivitylemma}. However, then one has to add conditions on the formal dimensions: $\fd(X)\geq 2\fd(X^\Ga)$, for the injectivity lemma to hold.
		\item[ii)]	For $(\Ga,R)=(\Sp(1),\Q)$ there is an analogous theorem, if one makes additional assumptions: first, that the $Z_i^\Ga$ are cycles, and second, that the localization theorem holds (the injectivity lemma for $(\Sp(1),\Q)$ only holds under this assumption, see Remark \ref{rmk:injectivitylemma}, iii)).
	\end{itemize}
\end{remark}

\section{Examples}\label{sec:examples}
Since conjugation spaces are discussed in \cite{HausmannHolmPuppe2005}, we concentrate on the case of circle spaces.
\subsection{$R$-spaces}\label{sec:circleexamples}
Our main class of examples of circle spaces are homogeneous spaces, in particular they are all \emph{$R$-spaces}, more commonly known as (generalized) real flag manifolds. We reserve the terminology real flag manifolds to $\Fl_\D(\R^N)$, which are also $R$-spaces.

Out of the $R$-spaces, we have four main classes of examples of circle spaces: spheres $S^{4n}$, even real flag manifolds, quaternionic and octonionic flag manifolds. The simplest example consists of spheres $S^{4n}$; this already illustrates the idea of the proof. Let $\U(1)$ act on $\R^{4n+1}$ as the linear orthogonal representation, which splits into $n$ weight one and $2n+1$ trivial representations; $S^{4n}\se \R^{4n+1}$ is $\U(1)$-invariant.
\begin{proposition}\label{prop:s4n}
	With this $\Ga$-action $S^{4n}$ is a circle space with fixed point set $S^{2n}$.
\end{proposition}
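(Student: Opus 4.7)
The plan is to apply the generalized Borel-Haefliger theorem (Theorem \ref{thm:BHI}) directly, by exhibiting an explicit additive $\Q$-basis of $H^*(S^{4n};\Q)$ consisting of halving cycles.

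First I would identify the fixed point set. The decomposition $\R^{4n+1}=\C^n\oplus\R^{2n+1}$ shows that $\U(1)$ acts trivially exactly on the second summand, so
$$
(S^{4n})^\Ga = S^{4n}\cap \R^{2n+1}= S^{2n},
$$
matching the claimed fixed set. Note also that $S^{4n}$ is smooth, compact and orientable, and $H^*(S^{4n};\Q)$ is concentrated in degrees $0$ and $4n=2D\cdot n$ (with $D=2$), so the degree hypothesis of Theorem \ref{thm:BHI} is satisfied.

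For the basis of halving cycles I would take $Z_0:=S^{4n}$ (of codimension type $(0,0)$, trivially a good $\Ga$-invariant cycle with excess multiplicity $1$) and $Z_1:=\{p\}$ for some point $p\in S^{2n}$, which is automatically $\U(1)$-invariant. Both $\{p\}\subset S^{4n}$ and $\{p\}\subset S^{2n}$ are smooth single-point submanifolds with orientable normal bundles, hence are cycles, and $Z_1$ has codimension type $(4n,2n)=(2Dn,Dn)$, as required for a halving cycle. To compute the excess multiplicity at $p$, observe that the intersection $Z_1\cap X^\Ga=\{p\}$ is clean and
$$
\eta_p \;=\; T_pS^{4n}/\bigl(T_p\{p\}+T_pS^{2n}\bigr)\;=\;T_pS^{4n}/T_pS^{2n},
$$
which by construction is the normal $\U(1)$-representation of $\R^{2n+1}\subset\R^{4n+1}$ at the origin, namely $n$ copies of the weight-one real $\U(1)$-representation $\C$. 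Hence all excess weights equal $1$ and $\mu_1=1^n=1\neq 0$.

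Since $\{[Z_0],[Z_1]\}=\{1,[\mathrm{pt}]\}$ is an additive basis of $H^*(S^{4n};\Q)$ made of good $\U(1)$-invariant cycles of the correct codimension types with nonzero excess multiplicity, Theorem \ref{thm:BHI} applies and shows that $S^{4n}$ is a circle space with fixed point set $S^{2n}$; the cohomology frame satisfies $\ka[\mathrm{pt}_{S^{4n}}]=[\mathrm{pt}_{S^{2n}}]$. There is no real obstacle here: the only thing to check carefully is that the normal representation of $S^{2n}\subset S^{4n}$ at a fixed point consists purely of weight-one $\U(1)$-summands (so that $\mu_1\neq 0$), which follows immediately from the assumed weight decomposition of $\R^{4n+1}$.
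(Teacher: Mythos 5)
Your proof is correct and follows essentially the same route as the paper: identify the fixed point set as $S^{4n}\cap\R^{2n+1}=S^{2n}$ and apply the generalized Borel--Haefliger theorem with the class of a fixed point as the halving cycle generating $H^{4n}$. You merely spell out the excess-multiplicity computation that the paper leaves implicit.
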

\begin{proof}
	The fixed point set of $S^{4n}$ is $S^{4n}\cap \R^{2n+1}=S^{2n}$. Since $H^*(S^{4n})=\Z[x]/(x^2)$ is generated by a $\Ga$-invariant halving cycle, namely the class of a fixed point, by the generalized Borel-Haefliger theorem, $S^{4n}$ is a circle space.
\end{proof}

\subsubsection{Real flag manifolds}\label{subsec:realflags}

Our first nontrivial class of examples are the \emph{even real flag manifolds}\label{word:evenrealflag}, i.e.\ flag manifolds $\Fl_{2\D}(\R^{2n})$, where all dimensions are even. More details about this example can be found in \cite{realflagpaper}, we give a brief summary. For $\D=(d_1\stb d_m)$, we denote by $\Fl_\D(\R^n)$ the manifold of flags $F_\bullet=(F_1\se \ldots F_m)$, whose dimensions are $\dim F_i=s_i$, where $s_i=\sum_{j=1}^id_j$.

The identification of $\R^{2n}\leftrightarrow \C^n$ as real $\Ga$-representations induces an action on $\Fl_{\mathcal{E}}(\R^{2n})$, $\mathcal{E}=(e_1\stb e_r)$. The flag manifold $\Fl_{\mathcal{E}}(\R^{N})$ has a Schubert cell decomposition
\begin{equation}\label{eq:Schubertincidence0} \Om_I(A_\bullet) = \{ F_\bullet\in \Fl_{\mathcal{E}}(\R^N): \dim F_i\cap A_{k}=r_I(i,k)\},\end{equation}
where $$I\in \OSP(\mathcal{E})= S_N/(S_{e_1}\times \ldots \times S_{e_r})$$
is an \emph{ordered set partition}\label{word:OSP}, and
$r_I(i,k)=\#\{l\in I_1\cup \ldots \cup I_i: l\leq k\}$. If $2\D=(2d_1,2d_2\stb 2d_r)$ and $I\in \OSP(\D)$, then \emph{the doubled ordered set partition} $DI\in \OSP(2\D)$ is obtained by replacing each $i\in I_j$ by $(2i-1,2i)\in DI_{j}$. A \emph{double Schubert variety}\label{word:doubleSchubert} $\si_{DI}^\R\se \Fl_{2\D}^\R$ is a Schubert variety corresponding to $DI\in \OSP(2\D)$. In the case of the Grassmannian $\D=(k,l)$, $DI\in \binom{2(k+l)}{2k}$ corresponds to the Young diagram obtained by subdividing each square into $2\times 2$ squares in the Young diagram corresponding to $I\in \binom{k+l}{k}$, see Figure \ref{fig:doubleYoung}.

\begin{figure}
	\centering
	\begin{picture}(50,30)
	\put(0,30){\line(0,-1){30}}
	\put(10,30){\line(0,-1){30}}
	\put(20,30){\line(0,-1){20}}
	\put(30,30){\line(0,-1){10}}	
	\put(40,30){\line(0,-1){10}}	
	\put(0,30){\line(1,0){40}}
	\put(0,20){\line(1,0){40}}
	\put(0,10){\line(1,0){20}}
	\put(0,0){\line(1,0){10}}
	\put(50,15){\vector(1,0){10}}
	\end{picture}$\quad$
	\begin{picture}(50,30)
	\linethickness{0.1mm}
	\put(5,30){\line(0,-1){30}}
	\put(15,30){\line(0,-1){20}}
	\put(25,30){\line(0,-1){10}}	
	\put(35,30){\line(0,-1){10}}		
	\put(0,25){\line(1,0){40}}			
	\put(0,15){\line(1,0){20}}				
	\put(0,5){\line(1,0){10}}					
	\linethickness{0.4mm}
	\put(0,30){\line(0,-1){30}}
	\put(10,30){\line(0,-1){30}}
	\put(20,30){\line(0,-1){20}}
	\put(30,30){\line(0,-1){10}}	
	\put(40,30){\line(0,-1){10}}	
	\put(0,30){\line(1,0){40}}
	\put(0,20){\line(1,0){40}}
	\put(0,10){\line(1,0){20}}
	\put(0,0){\line(1,0){10}}
	
	\end{picture}
	\caption{The double of a Young diagram}
	\label{fig:doubleYoung}
\end{figure}
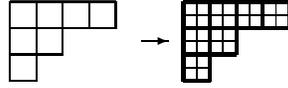

The double Schubert varieties $\si_{DI}^\R$ are cycles and their classes form a basis of $H^*(\Fl_{2\D}^\R;\Q)$, see \cite{realflagpaper}. Using that they are circle spaces we can deduce their structure constants.
\begin{theorem}\label{thm:doubleflagcirclespace}
	Let $\Ga$ act on $\Fl_{2\D}(\R^{2n})$ obtained by identifying $\C^n$ with $\R^{2n}$. With this action it is a circle space, with fixed point set $\Fl_\D(\C^n)$. Furthermore $$\ka[\si_{DI}^\R]=2^{|I|}[\si_I^\C],$$ where $[\si_I^\C]\in H^{2|I|}(\Fl_\D(\C^N))$.
\end{theorem}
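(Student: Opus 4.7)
The plan is to apply the Generalized Borel--Haefliger theorem (Theorem \ref{thm:BHI}) with the family of double Schubert varieties $\{\sigma_{DI}^\R\}_{I\in \OSP(\D)}$ as the required basis of halving cycles. The formula $\ka[\sigma_{DI}^\R]=2^{|I|}[\sigma_I^\C]$ will then follow once I identify the excess multiplicity $\mu_{DI}$ of each $\sigma_{DI}^\R$ to be $2^{|I|}$.

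First I would identify the fixed point set. A real subspace of $\R^{2n}\iso\C^n$ is $\U(1)$-invariant under scalar multiplication if and only if it is a complex subspace, so the $\U(1)$-fixed flags in $\Fl_{2\D}(\R^{2n})$ are exactly the complex flags of type $\D$, giving $\Fl_{2\D}(\R^{2n})^\Ga = \Fl_\D(\C^n)$, which is connected. Next, fix a reference flag $A_\bullet$ in $\R^{2n}$ with all $A_{2k}$ complex subspaces; then each $\sigma_{DI}^\R(A_\bullet)$ is $\U(1)$-invariant and its fixed point set consists of those complex flags in $\Fl_\D(\C^n)$ satisfying the incidence conditions $\dim_\C(F_i\cap A^\C_k)=r_I(i,k)$, i.e.\ exactly the complex Schubert variety $\sigma_I^\C(A_\bullet^\C)$. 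The Schubert cell decomposition on both sides gives stratifications with single top open strata, and by Example \ref{ex:stratifiedgood} each $\sigma_{DI}^\R$ is a good $\Ga$-invariant subvariety of codimension type $(4|I|,2|I|)=(2D|I|,D|I|)$. That the classes $[\sigma_{DI}^\R]$ form a $\Q$-basis of $H^*(\Fl_{2\D}(\R^{2n});\Q)$ is proved in \cite{realflagpaper}, so the basis hypothesis of Theorem \ref{thm:BHI} is satisfied once the excess multiplicity is shown to be nonzero and independent of the chosen fixed point.

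The key computation is the weight analysis at a fixed point $F_\bullet\in\Fl_\D(\C^n)$. The tangent space $T_{F_\bullet}\Fl_{2\D}(\R^{2n})$ is a sum of $\Hom_\R$-spaces between the real subquotients of the flag; since each such subquotient is a complex vector space on which $\U(1)$ acts by scalar multiplication (weight $1$), a standard decomposition $\Hom_\R(V,W)=\Hom_\C(V,W)\oplus\Hom_{\conj\C}(V,W)$ shows that the $\C$-linear part is fixed (weight $0$) and the $\C$-antilinear part has weight $2$. Thus the normal bundle $\nu(\Fl_\D(\C^n)\se \Fl_{2\D}(\R^{2n}))$ has all weights equal to $2$ (Definition of weights in the paper). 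Consequently, for the $\Ga$-invariant Schubert variety $\sigma_{DI}^\R$, the excess bundle $\eta$ at any point of $(\sigma_{DI}^\R)^\Ga_{\mathcal{R}}=\sigma_I^\C$ has real rank $4|I|-2|I|=2|I|$ and all its $|I|$ weight-$2$ summands contribute to give
\[
\mu_{DI}=\mu(\eta)=2^{|I|}\neq 0,
\]
independently of the fixed point. This also verifies that $\sigma_{DI}^\R$ is a good $\Ga$-invariant \emph{cycle}, hence a halving cycle.

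The main obstacle I anticipate is the careful justification of the stratification being ``good'' in the sense of Definition \ref{def:goodcycle}: one must check that the fat nonsingular set $U$ of $\sigma_{DI}^\R$ can be chosen $\U(1)$-invariantly and that the complement of the open Schubert cell is $\U(1)$-invariantly of sufficiently high codimension on both $X$ and $X^\Ga$ so that the fixed point set $U^\Ga$ is itself the open cell in $\sigma_I^\C$. This is where the reference to \cite{realflagpaper} and the matching stratifications on the two sides enter. Once all hypotheses of Theorem \ref{thm:BHI} are checked, it delivers both assertions of the theorem: that $\Fl_{2\D}(\R^{2n})$ is a circle space with fixed point set $\Fl_\D(\C^n)$, and that the cohomology frame satisfies $\si[\sigma_{DI}^\R]=[\sigma_{DI}^\R]_\Ga$ and $\ka[\sigma_{DI}^\R]=\mu_{DI}[\sigma_I^\C]=2^{|I|}[\sigma_I^\C]$, as required.
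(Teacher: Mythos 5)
Your proposal is correct and follows essentially the same route as the paper: apply the generalized Borel--Haefliger theorem to the basis of double Schubert varieties, deferring the verification that they are halving cycles forming a basis to \cite{realflagpaper}, and compute via the splitting $\Hom_\R(\C,\C)=\Hom_\C(\C,\C)\oplus\Hom_{\conj{\C}}(\C,\C)$ that all normal weights are $2$, so the excess multiplicity is $2^{|I|}$. The paper's proof is just a terser version of the same argument.
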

\begin{proof}
	Let $F_\bullet$ be a complete flag in $\R^{2n}$, such that $F_{2i}$ are $\Ga$-invariant, and let $F_\bullet^\C$ denote the corresponding complex flag in $\C^n$. By the generalized Borel-Haefliger theorem, it is enough to show that the Schubert varieties $\si_{DI}^\R(F_\bullet)$ are halving cycles, have fixed point set $\si_I^\C(F_\bullet^\C)$ and that $[\si_{DI}^\R]$ form a basis.  For further details see \cite{realflagpaper}.
	
	We sketch why all normal weights of $\Fl_\D(\C^n)\inj \Fl_\D(\R^{2n})$ are 2. Since all tangent spaces are sums of $\Hom$-spaces, this claim can be reduced to linear algebra, namely computing weights of the $\U(1)$-representation $\Hom_\R(\C,\C)$. This representation splits into the sum of two $\U(1)$-representations: a 2-dimensional weight 0 representation and a weight 2 representation. The weight 0 representation corresponds to $\Hom_\C(\C,\C)$ (in the geometric picture this correponds to the tangent space of the complex part) and the weight 2 representation $\Hom_{\conj{\C}}(\C,\C)$ (the normal space of the complex part).
\end{proof}
The corollaries below follow from Theorem \ref{thm:doubleflagcirclespace}, multiplicativity of $\ka$ and that
	$$\ka p_j(S^\R_i)=2^jc_j(S^\C_i).$$
\begin{corollary}[Littlewood-Richardson coefficients]\label{cor:realLR}
	$$[\si_{DI}^\R]\cdot[\si_{DJ}^\R]=\sum_{K}c_{IJ}^K [\si_{DK}^\R]$$
	where $c_{IJ}^K$ are the complex Littlewood-Richardson coefficients
	$$[\si_{I}^\C]\cdot[\si_{J}^\C]=\sum_{K}c_{IJ}^K [\si_{K}^\C].$$
\end{corollary}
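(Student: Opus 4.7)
The plan is to pull the identity back through the degree-halving ring isomorphism $\ka$ and reduce the real structure constants to the complex ones. The key inputs are already in place: by Theorem \ref{thm:doubleflagcirclespace} together with Theorem \ref{thm:BHI}, the classes $[\si_{DI}^\R]$ form a basis of $H^{2*}(\Fl_{2\D}(\R^{2n});\Q)$ on which $\ka$ is explicitly computed, and by Theorem \ref{thm:multiplicativity} $\ka$ is a ring homomorphism. In fact by the injectivity lemma (Lemma \ref{lemma:injectivity}) $\ka$ is an isomorphism, so it suffices to compare images under $\ka$.

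Write $[\si_{DI}^\R] \cdot [\si_{DJ}^\R] = \sum_K a_{IJ}^K [\si_{DK}^\R]$ for some unknown rationals $a_{IJ}^K$, and apply $\ka$. Using multiplicativity and the formula $\ka[\si_{DI}^\R] = 2^{|I|}[\si_I^\C]$, the left-hand side becomes
\[
\ka[\si_{DI}^\R]\cdot\ka[\si_{DJ}^\R] = 2^{|I|+|J|}\,[\si_I^\C]\cdot[\si_J^\C] = 2^{|I|+|J|}\sum_K c_{IJ}^K[\si_K^\C],
\]
while the right-hand side becomes $\sum_K a_{IJ}^K\,2^{|K|}[\si_K^\C]$. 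Since the complex Schubert classes $[\si_K^\C]$ form a basis of $H^*(\Fl_\D(\C^n);\Q)$, equating coefficients yields $a_{IJ}^K\cdot 2^{|K|} = 2^{|I|+|J|}c_{IJ}^K$ for every $K$.

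The only subtlety is the grading: $c_{IJ}^K \in \Q$ is nonzero only if $|K| = |I|+|J|$, because $[\si_I^\C] \in H^{2|I|}(\Fl_\D(\C^n))$. For such $K$ the factors $2^{|K|}$ and $2^{|I|+|J|}$ coincide and we obtain $a_{IJ}^K = c_{IJ}^K$; for all other $K$ the same degree argument applied to $[\si_{DK}^\R] \in H^{2D|K|}(\Fl_{2\D}(\R^{2n}))$ forces $a_{IJ}^K = 0 = c_{IJ}^K$. This finishes the proof. There is no real obstacle here — the corollary is essentially a formal consequence of multiplicativity of $\ka$ combined with the explicit formula of Theorem \ref{thm:doubleflagcirclespace}; the only thing to notice is the fortunate cancellation of the powers of $2$, which is automatic from the fact that $\ka$ is degree-halving and both sides of the Littlewood-Richardson expansion live in a single cohomological degree.
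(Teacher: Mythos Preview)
Your proof is correct and is essentially the same approach the paper indicates: the paper simply says the corollary follows from Theorem \ref{thm:doubleflagcirclespace} together with multiplicativity of $\ka$, and you have spelled out precisely that argument, including the degree observation that makes the powers of $2$ cancel.
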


\begin{corollary}[Giambelli formula type description]
	\label{cor:realBGG}
	$$[\si_{DI}^\R]=q(p_*(S_i^\R))\qquad \iff \qquad [\si_{I}^\C]=q(c_*(S_i^\C)),$$
	i. e. the same polynomial describes the double real Schubert classes and complex Schubert classes in terms of Pontryagin and Chern classes.
\end{corollary}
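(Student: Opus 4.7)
The plan is to deduce the corollary directly from Theorem \ref{thm:doubleflagcirclespace}, using that the ring homomorphism $\ka$ associated to the circle space structure on $\Fl_{2\D}(\R^{2n})$ is a graded isomorphism of rings (multiplicativity from Theorem \ref{thm:multiplicativity}, bijectivity from the generalized Borel-Haefliger theorem via the injectivity lemma). The central observation is that both the scalar attached to the class $[\si_{DI}^\R]$ and the scalars attached to the Pontryagin classes $p_j(S_i^\R)$ are powers of $2$ whose exponents are exactly controlled by the cohomological degree, so they cancel uniformly against each other.

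First I would fix a homogeneous polynomial $q$ and write it as a sum of monomials in the variables $p_j(S_i^\R)$. Any monomial appearing in $q(p_*(S_i^\R))$ with cohomological degree $4|I|$ must have the form $\prod_l p_{j_l}(S_{i_l}^\R)^{a_l}$ with $\sum_l a_l j_l = |I|$. Applying $\ka$ monomial-by-monomial, using multiplicativity together with the identity $\ka p_j(S_i^\R)=2^j c_j(S_i^\C)$ from the statement preceding the corollary, each such monomial transforms into $2^{\sum_l a_l j_l}\prod_l c_{j_l}(S_{i_l}^\C)^{a_l}=2^{|I|}\prod_l c_{j_l}(S_{i_l}^\C)^{a_l}$. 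Summing over the monomials of $q$, the common factor $2^{|I|}$ factors out, giving the key identity
\[ \ka\bigl(q(p_*(S_i^\R))\bigr)=2^{|I|}\,q(c_*(S_i^\C)). \]

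From here both implications are immediate. If $[\si_{DI}^\R]=q(p_*(S_i^\R))$, applying $\ka$ and using Theorem \ref{thm:doubleflagcirclespace} yields $2^{|I|}[\si_I^\C]=2^{|I|}q(c_*(S_i^\C))$, and dividing by $2^{|I|}$ (we work with rational coefficients) gives $[\si_I^\C]=q(c_*(S_i^\C))$. Conversely, if $[\si_I^\C]=q(c_*(S_i^\C))$, then $\ka\bigl(q(p_*(S_i^\R))\bigr)=2^{|I|}[\si_I^\C]=\ka[\si_{DI}^\R]$, and injectivity of $\ka$ (from the generalized Borel-Haefliger theorem) forces $[\si_{DI}^\R]=q(p_*(S_i^\R))$.

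There is no real obstacle here, since all of the work is done by Theorem \ref{thm:doubleflagcirclespace}; the only point that deserves a moment's care is the bookkeeping of the degree, namely that cohomological homogeneity of $q$ ensures $\sum_l a_l j_l$ is a constant equal to $|I|$ on every monomial, so that the $2^{|I|}$ scaling is genuinely uniform and cancels cleanly. A reader who prefers a coordinate-free formulation can note that the same argument shows that the map of graded rings sending $p_j(S_i^\R)\mapsto 2^j c_j(S_i^\C)$ intertwines the Pontryagin presentation of $H^{4*}(\Fl_{2\D}(\R^{2n});\Q)$ with the Chern presentation of $H^{2*}(\Fl_\D(\C^n);\Q)$ and carries $[\si_{DI}^\R]$ to $2^{|I|}[\si_I^\C]$.
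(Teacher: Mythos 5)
Your proof is correct and follows exactly the route the paper indicates: the paper derives this corollary in one line from Theorem \ref{thm:doubleflagcirclespace}, multiplicativity of $\ka$, and the identity $\ka p_j(S_i^\R)=2^jc_j(S_i^\C)$, which is precisely the argument you spell out (including the degree bookkeeping that makes the factor $2^{|I|}$ uniform, and injectivity of $\ka$ for the converse direction).
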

\begin{corollary}\label{cor:realrelations} The cohomology ring of an even flag manifold can be described as follows:
	$$ H^*(\Fl_{2\D}^\R)=\Q[p_*(S_i^\R)]/\mathcal{R}(p_*(S_i^\R))\acsa H^*(\Fl_\D^\C)=\Q[c_*(S_i^\C)]/\mathcal{R}(c_*(S_i^\C)),$$
	where $\mathcal{R}(x_*^i)$ denotes an ideal in the variables $x_j^i$, that is the same polynomial relations hold in the two cohomology rings in terms of Pontryagin and Chern classes of the respective tautological bundles.
\end{corollary}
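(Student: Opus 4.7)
The plan is to leverage the ring isomorphism $\ka\colon H^{2*}(\Fl_{2\D}(\R^{2n});\Q)\to H^{*}(\Fl_\D(\C^n);\Q)$ produced by the circle space structure of Theorem \ref{thm:doubleflagcirclespace}, together with the formula $\ka p_j(S_i^\R)=2^j c_j(S_i^\C)$ that is already recorded in the text. First I would verify that $\ka$ is genuinely an isomorphism in this setting: Theorem \ref{thm:doubleflagcirclespace} supplies the circle space structure, and $\dim_\R\Fl_{2\D}(\R^{2n})=2\dim_\R\Fl_\D(\C^n)$, so the injectivity lemma (Lemma \ref{lemma:injectivity}) promotes $\ka$ to an isomorphism.

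Next I would organize the relations into a comparison of presentations. It is classical that $H^*(\Fl_\D(\C^n);\Q)$ is generated as a ring by the Chern classes $c_j(S_i^\C)$, and since $\ka$ is a ring isomorphism sending $p_j(S_i^\R)$ to the nonzero scalar multiple $2^j c_j(S_i^\C)$ of a generator, the Pontryagin classes $p_j(S_i^\R)$ likewise generate $H^{*}(\Fl_{2\D}(\R^{2n});\Q)$. Consequently, one has two surjective graded ring homomorphisms
\[
\varphi^\R\colon \Q[x_j^i]\onto H^{*}(\Fl_{2\D}^\R),\quad x_j^i\mapsto p_j(S_i^\R),
\qquad
\varphi^\C\colon \Q[x_j^i]\onto H^{*}(\Fl_\D^\C),\quad x_j^i\mapsto c_j(S_i^\C),
\]
where $\Q[x_j^i]$ is graded by $\deg(x_j^i):=2j$, which matches $\deg c_j(S_i^\C)$ and the halved degree $\tfrac12\deg p_j(S_i^\R)$. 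The cohomology ring of the even flag manifold is then $\Q[x_j^i]/\ker\varphi^\R$ and that of the complex flag manifold is $\Q[x_j^i]/\ker\varphi^\C$, and everything reduces to checking the equality $\ker\varphi^\R=\ker\varphi^\C$.

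To compare the two kernels I would introduce the scaling automorphism $\Phi\colon \Q[x_j^i]\to\Q[x_j^i]$, $x_j^i\mapsto 2^j x_j^i$. The relation $\ka p_j(S_i^\R)=2^j c_j(S_i^\C)$ combined with multiplicativity of $\ka$ yields the commuting square $\ka\circ\varphi^\R=\varphi^\C\circ\Phi$, so that $\ker\varphi^\R=\Phi^{-1}(\ker\varphi^\C)$ by bijectivity of $\ka$. The crucial observation is that $\Phi$ acts by a scalar on each homogeneous component: a monomial $\prod (x_j^i)^{\alpha_{ji}}$ of grading degree $2d=2\sum j\alpha_{ji}$ is scaled by $\prod 2^{j\alpha_{ji}}=2^d$, so $\Phi(R)=2^d R$ for every homogeneous $R$ of degree $2d$. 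Hence $\Phi$ preserves every homogeneous ideal; in particular it preserves the graded ideal $\ker\varphi^\C$, giving $\Phi^{-1}(\ker\varphi^\C)=\ker\varphi^\C$. Setting $\mathcal{R}:=\ker\varphi^\R=\ker\varphi^\C$ then yields the stated equivalence.

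The main potential obstacle is bookkeeping rather than content: one must adopt the right grading on the abstract polynomial ring (so that $\varphi^\R$ is graded after halving degrees) in order for $\Phi$ to act as a scalar on homogeneous pieces. Once this grading is in place, the argument is essentially formal. The only other item that might need a moment of care is confirming that $\varphi^\R$ is surjective without circularly invoking the statement being proved; this is done using $\ka$ as above, or alternatively by noting that the even real Schubert classes are expressible in Pontryagin classes via Corollary \ref{cor:realBGG}.
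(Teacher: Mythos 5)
Your argument is correct and fills in exactly the reasoning the paper leaves implicit: the corollary is stated there as an immediate consequence of Theorem \ref{thm:doubleflagcirclespace}, the multiplicativity (and bijectivity) of $\ka$, and the formula $\ka p_j(S_i^\R)=2^jc_j(S_i^\C)$, which are precisely the ingredients you use. The observation that the rescaling $x_j^i\mapsto 2^jx_j^i$ acts by a nonzero scalar on each graded piece and hence fixes the homogeneous ideal of relations is the right way to make the ``same relations'' claim precise.
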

\begin{corollary}[Equivariant Giambelli formula]
	For the case of Grassmannians $\Fl_{2\D}^\R=\Gr_{2k}(\R^{2(k+l)})$, $\D=(k,l)$, the (doubled) Giambelli formula holds, even $\Ga$-equivariantly
	$$ 	[\si_{D\la}]_\Ga=\det
	\left|
	\begin{array}{cccc}
	[\si_{D\la_1}]_\Ga  & [\si_{D(\la_1+1)}]_\Ga  &   \ldots &  [\si_{D(\la_1+k)}]_\Ga \\
	
	[\si_{D(\la_2-1)}]_\Ga  &[\si_{D\la_2}]_\Ga  &\ldots   &[\si_{D(\la_2+k-1)}]_\Ga \\
	\vdots  & \vdots  &\ddots   &\vdots\\
	
	[\si_{D(\la_k-k)}]_\Ga  &  \ldots & \ldots  &[\si_{D\la_k}]_\Ga
	\end{array}
	\right|$$
	where $D\la$ denotes the double of the partition $\la\se k\times l$ and $Da=(2a,2a)$ for $a\in \Z$.
\end{corollary}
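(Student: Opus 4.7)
The plan is to obtain the equivariant Giambelli formula by lifting the classical complex Giambelli formula to the even real Grassmannian and then promoting it to equivariant cohomology via the circle space section $\si$.

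First I would recall the classical Giambelli formula for $\Gr_k(\C^{k+l})$: for a partition $\la\se k\times l$,
\[ [\si_\la^\C]=\det\bigl([\si_{(\la_i+j-i)}^\C]\bigr)_{1\le i,j\le k}, \]
where single-row Schubert classes are (up to sign) the Chern classes of the tautological quotient bundle $Q_\C$, and entries whose subscript is not a valid partition are set to zero. This is a polynomial identity in the variables $c_*(Q_\C)$ that is standard and will serve as the input.

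Next I would transfer this identity to the real even Grassmannian via Corollary \ref{cor:realBGG}: the same polynomial, evaluated on the Pontryagin classes of the real tautological bundle, expresses the double Schubert classes $[\si_{D\mu}^\R]$ in $H^*(\Gr_{2k}(\R^{2(k+l)});\Q)$. Applying this to the Giambelli polynomial instantly yields the non-equivariant identity
\[ [\si_{D\la}^\R]=\det\bigl([\si_{D(\la_i+j-i)}^\R]\bigr)_{1\le i,j\le k}. \]

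Finally I would lift this to equivariant cohomology. By Theorem \ref{thm:doubleflagcirclespace}, $\Gr_{2k}(\R^{2(k+l)})$ is a circle space whose double Schubert varieties are halving cycles; by Theorem \ref{thm:multiplicativity} the Leray--Hirsch section $\si$ is a ring homomorphism, so applying $\si$ to both sides of the above determinantal identity preserves it upon expansion. Theorem \ref{prop:halvingcycle} then gives $\si[\si_{D\mu}^\R]=[\si_{D\mu}^\R]_\Ga$ for every single-row class appearing in the matrix, and substituting yields the stated equivariant Giambelli formula in $H_\Ga^*(\Gr_{2k}(\R^{2(k+l)});\Q)$. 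The only mild obstacle is purely bookkeeping: making sure that out-of-range entries are interpreted as the zero class consistently in the real, complex, and equivariant settings (handled by the usual Giambelli convention), and confirming the indexing matches the standard Jacobi--Trudi matrix $[\si_{D(\la_i+j-i)}]_\Ga$. No further geometric input is needed beyond the circle space machinery already in place.
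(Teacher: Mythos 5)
Your proposal is correct and follows essentially the same route as the paper's (very brief) proof: deduce the non-equivariant real identity from the complex Giambelli formula via the degree-halving isomorphism $\ka$ (equivalently, via Corollary \ref{cor:realBGG}, whose content is exactly that transfer), then apply the multiplicative section $\si$ together with $\si[\si_{D\mu}]=[\si_{D\mu}]_\Ga$ to obtain the equivariant statement. The homogeneity of the determinant guarantees the powers of $2$ match uniformly, which is the only bookkeeping point, and you have handled it.
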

\begin{proof}
	Nonequivariantly, this follows from the complex Giambelli formula and from $X$ being a circle space with $\ka[\si_{D\la}^\R]=2^{|\la|}[\si_\la^\C]$. Equivariantly, this follows from $\si$ being multiplicative (Corollary \ref{thm:multiplicativity}) and from the generalized Borel-Haefliger theorem, $\si[\si_{D\la}]=[\si_{D\la}]_\Ga$.
\end{proof}

The Grassmannians $\Gr_{K}(\R^N)$ are also circle spaces, except when $K$ is odd, $N$ is even. If $K$ and $N$ is even, this is contained in Theorem \ref{thm:doubleflagcirclespace}. The remaining cases: $K$ odd $N$ even and $K$ even $N$ odd are both nonorientable, in these cases Remark \ref{rmk:injectivitylemma}, i) can be used. This gives examples of nonorientable circle spaces. For further details, see \cite{realflagpaper}.

\subsubsection{Galois type actions}\label{subsec:Galois}
For the next examples of halving spaces, we define $\U(1)$-actions on $\HP^n$ with fixed point set $\CP^n$ and $\Sp(1)$-actions on $\OP^2$ with fixed point set $\HP^2$. As a first step, let us recall the corresponding actions on the real normed division algebras. There are four real normed division algebras $\F_i$:
$$ \R\se \C \se \HH \se \O.$$
In each case, there is a subgroup $\Ga\iso \operatorname{O}(\F_{i-1})$ of the $\R$-algebra automorphisms $\Aut(\F_i)$, whose fixed point set $\F_i^\Ga$ is the previous division algebra $\F_{i-1}$, $i=2,3,4$ (hence the name ``Galois type''). We briefly recall how these actions arise.

First, the group of (continuous!) automorphisms is well-known to be $\Z_2$ in the case of $\C$, with fixed point set $\R$. This action extends to $\C^n$ and also to the complex flag manifolds $\Fl_\D(\C^N)$ with fixed point-set $\Fl_\D(\R^n)$. This is the action classically studied by Borel and Haefliger.

Next, let $\HH$ act on itself by inner automorphisms. Then $\Ga=\U(1)\se \C=\bra 1,i\ket\leq \HH$ acts on $\HH$, with fixed-point set $\C\leq \HH$. This action extends to $\HH^n$ therefore to $\HP^n$, and even to any quaternionic flag manifold $\Fl_\D(\HH^n)$, with fixed point set $\Fl_\D(\C^n)$.

The automorphisms of the normed algebra $\O$ fixing $\HH$ is isomorphic to $\Ga:=\Sp(1)$, and in fact $\O^\Ga=\HH$. This induces an action on the octonionic flag manifolds, which can be seen on their different models -- we describe these actions in Section \ref{subsec:octoflag}. For additional details, see e.g. \cite[Propositions B.4.1, B.4.2]{thesis}.
\subsubsection{Quaternionic flag manifolds}
The flag manifold $\Fl_\D(\HH^n)$ has a Schubert cell decomposition $\si_I^\HH$ where $I\in \OSP(\D)$.
\begin{theorem}\label{thm:quatflagcirclespace}
	With the $\Ga=\U(1)$-action defined by inner automorphisms (see Section \ref{subsec:Galois}), $\Fl_\D(\HH^n)$ is a circle space, with fixed point set $\Fl_\D(\C^n)$. Furthermore
	$$ \ka[\si_I^\HH]=2^{|I|}[\si_I^\C],$$
	where $[\si_I^\C]\in H^{2|I|}(\Fl_\D(\C^n))$.
\end{theorem}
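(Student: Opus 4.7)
The strategy is to verify the hypotheses of the generalized Borel-Haefliger theorem (Theorem~\ref{thm:BHI}) for $X = \Fl_\D(\HH^n)$ with the inner-automorphism action, in direct parallel with the proof of Theorem~\ref{thm:doubleflagcirclespace}. First I would identify the fixed set: on $\HH$ the $\U(1)$-action by conjugation fixes precisely the subfield $\C$, so componentwise it fixes $\C^n \subseteq \HH^n$, and a quaternionic subspace $V \subseteq \HH^n$ is $\Ga$-invariant iff $V = V_\C \otimes_\C \HH$ for a complex subspace $V_\C \subseteq \C^n$. This gives $\Fl_\D(\HH^n)^\Ga = \Fl_\D(\C^n)$.

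Next, choose a full quaternionic reference flag $F_\bullet$ extending a full complex flag $F_\bullet^\C$ via $F_i = F_i^\C \otimes_\C \HH$, so that every Schubert variety $\si_I^\HH(F_\bullet)$, for $I \in \OSP(\D)$, is $\Ga$-invariant. Each $\si_I^\HH$ is a stratified subvariety whose unique top stratum is the Schubert cell, and the incidence description of Schubert varieties shows that its $\Ga$-fixed set is the corresponding complex Schubert variety $\si_I^\C(F_\bullet^\C)$. By Example~\ref{ex:stratifiedgood} this realises $\si_I^\HH$ as a good $\Ga$-invariant cycle of codimension type $(4|I|, 2|I|)$. Since the classes $[\si_I^\HH]$ form a $\Q$-basis of $H^*(\Fl_\D(\HH^n); \Q)$ concentrated in degrees divisible by $4 = 2D$ and $\Fl_\D(\HH^n)$ is compact and orientable, all the hypotheses of Theorem~\ref{thm:BHI} are in place.

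To extract the constant $2^{|I|}$ I would compute the excess multiplicity from the normal weights of $\Fl_\D(\C^n) \subseteq \Fl_\D(\HH^n)$. Decomposing $\HH = \C \oplus \C j$ as a $\U(1)$-representation under conjugation, the summand $\C$ is fixed, while the computation $aj \mapsto e^{i\theta}(aj)e^{-i\theta} = e^{2i\theta}\,aj$ shows that $\C j$ has weight $2$. At a complex flag the tangent space of $\Fl_\D(\HH^n)$ is a sum of $\Hom_\HH$-spaces between quaternionifications of complex subquotients, which splits $\Ga$-equivariantly as $\Hom_\C \oplus \Hom_\C \cdot j$, the first summand being the tangent space of $\Fl_\D(\C^n)$ (weight $0$) and the second the normal direction (weight $2$). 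Consequently every normal weight equals $2$, and by Remark~\ref{rmk:excessweightlemma}(ii) the excess bundle along $\si_I^\HH \cap \Fl_\D(\C^n) = \si_I^\C$ has real rank $2|I|$ with every weight equal to $2$, yielding excess multiplicity $2^{|I|}$. Theorem~\ref{thm:BHI} then delivers $\si[\si_I^\HH] = [\si_I^\HH]_\Ga$ and $\ka[\si_I^\HH] = 2^{|I|}[\si_I^\C]$, as claimed.

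The main obstacle I expect is the normal-bundle weight calculation: the inner-automorphism action on $\HH$ is not the standard complex-linear scalar action, so one has to keep careful track of left versus right $\HH$-module structures in the $\Hom$-spaces to ensure that the $j$-part of each $\Hom_\HH(E_i/E_{i-1}, E_j/E_{j-1})$ forms a single weight-$2$ summand and that this identification is compatible with the inclusion $\Fl_\D(\C^n) \hookrightarrow \Fl_\D(\HH^n)$. Once this linear-algebra step is in place, the generalized Borel-Haefliger theorem produces the cohomology frame with the asserted formula.
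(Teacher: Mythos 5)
Your proposal is correct and follows essentially the same route as the paper: the paper's proof also picks the $\Ga$-invariant reference flag $F_\bullet=F_\bullet^\C\otimes_\C\HH$, verifies that the quaternionic Schubert varieties are halving cycles with fixed point sets the complex Schubert varieties forming a basis, notes that all normal weights equal $2$, and invokes the generalized Borel--Haefliger theorem. Your write-up merely fills in the weight computation $\HH=\C\oplus\C j$ with $\C j$ of weight $2$, which the paper leaves implicit by analogy with the even real flag manifold case.
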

\begin{proof}
	If $F_\bullet^\C$ is a complex flag, then $F_\bullet:=F_\bullet^\C\otimes_\C\HH$ is a quaternionic flag, which is $\Ga$-invariant. Similarly to the case of real even flag manifolds, by the generalized Borel-Haefliger theorem it can be shown that the Schubert varieties $\si_I^\HH(F_\bullet)$ are halving cycles with respect to an appropriate complete flag $F_\bullet\in \Fl(\HH^n)$, with fixed points $\si_I^\C(F_\bullet^\C)$ and $[\si_I^\HH]$ form a basis of rational cohomology. The normal weights are all $2$.
\end{proof}
\begin{corollary}[Littlewood-Richardson coefficients]
	\label{cor:quaternionicLR}
	$$[\si_{I}^\HH]\cdot[\si_{J}^\HH]=\sum_{K}c_{IJ}^K [\si_{K}^\HH]$$
	where $c_{IJ}^K$ are the same Littlewood-Richardson coefficients as in
	$$[\si_{I}^\C]\cdot[\si_{J}^\C]=\sum_{K}c_{IJ}^K [\si_{K}^\C].$$
\end{corollary}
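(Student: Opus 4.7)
The plan is to apply the degree-halving ring isomorphism $\ka$ associated with the circle space structure on $\Fl_\D(\HH^n)$ to both sides of an expansion of the product $[\si_I^\HH]\cdot[\si_J^\HH]$, and then reduce the identity to the complex Littlewood--Richardson expansion in $H^*(\Fl_\D(\C^n);\Q)$, using that $\ka$ is multiplicative and injective.

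First I would invoke Theorem \ref{thm:quatflagcirclespace} to conclude that $\{[\si_K^\HH]\}$ is a basis of $H^*(\Fl_\D(\HH^n);\Q)$ (this is part of the basis statement used to verify the Borel--Haefliger hypothesis, and is stated inside the proof of that theorem), so there exist unique rationals $d_{IJ}^K$ with
\[ [\si_I^\HH]\cdot[\si_J^\HH]=\sum_{K}d_{IJ}^K [\si_K^\HH]. \]
By homogeneity, $d_{IJ}^K$ can be nonzero only when $|K|=|I|+|J|$, since $[\si_K^\HH]\in H^{4|K|}$.

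Next I would apply $\ka$ to both sides. By multiplicativity of $\ka$ (Theorem \ref{thm:multiplicativity}) and the explicit formula $\ka[\si_I^\HH]=2^{|I|}[\si_I^\C]$ from Theorem \ref{thm:quatflagcirclespace}, the left hand side becomes
\[ \ka[\si_I^\HH]\cdot\ka[\si_J^\HH]=2^{|I|+|J|}[\si_I^\C]\cdot[\si_J^\C]=2^{|I|+|J|}\sum_K c_{IJ}^K[\si_K^\C], \]
whereas the right hand side becomes
\[ \sum_K d_{IJ}^K \, 2^{|K|}[\si_K^\C]=2^{|I|+|J|}\sum_K d_{IJ}^K[\si_K^\C], \]
where in the last equality we used that only terms with $|K|=|I|+|J|$ appear. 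Since $\{[\si_K^\C]\}$ is a $\Q$-basis of $H^*(\Fl_\D(\C^n);\Q)$, comparing coefficients yields $d_{IJ}^K=c_{IJ}^K$.

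There is essentially no obstacle here: the only nontrivial ingredients are already encoded in Theorem \ref{thm:quatflagcirclespace} and Theorem \ref{thm:multiplicativity}. The corollary is a purely formal consequence of the fact that $\ka$ is a ring isomorphism which rescales each basis element by a known power of $2$ that is multiplicative in the degree.
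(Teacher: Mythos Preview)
Your proof is correct and follows exactly the approach the paper indicates: the paper's proof simply reads ``Exactly the same as Corollary \ref{cor:realLR},'' and that corollary is deduced from the multiplicativity of $\ka$ (Theorem \ref{thm:multiplicativity}) together with the formula $\ka[\si_{I}^\HH]=2^{|I|}[\si_I^\C]$ from Theorem \ref{thm:quatflagcirclespace}, precisely as you do. Your explicit handling of the degree condition $|K|=|I|+|J|$ to cancel the powers of $2$ is the one detail the paper leaves implicit.
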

\begin{proof}
	Exactly the same as Corollary \ref{cor:realLR}.
\end{proof}
\begin{corollary}[Giambelli formula type description]
	$$[\si_{I}^\HH]=q(p_*(S_i^\HH))\qquad \iff \qquad [\si_{I}^\C]=q(c_*(S_i^\C)),$$
	where $p_*$ denotes quaternionic Pontryagin classes. In words, the same polynomial describes the quaternionic and complex Schubert varieties in terms of characteristic classes.
\end{corollary}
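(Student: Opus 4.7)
The plan is to follow the proof of Corollary \ref{cor:realBGG} (the real analogue) verbatim, which the authors distill into three ingredients: Theorem \ref{thm:quatflagcirclespace} (circle space structure, giving $\kappa[\sigma_I^\HH] = 2^{|I|}[\sigma_I^\C]$), multiplicativity of $\kappa$ (Theorem \ref{thm:multiplicativity}), and the key identity
\[ \kappa\, p_j(S_i^\HH) = 2^j\, c_j(S_i^\C). \]

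First I would establish this identity. The cleanest route identifies both sides with one-row Schubert classes: classical Giambelli gives $c_j(S_i^\C) = (-1)^j [\sigma^\C_{(j)_i}]$ in $H^{2j}(\Fl_\D(\C^n))$, and the identical polynomial development in the quaternionic flag manifold gives $p_j(S_i^\HH) = (-1)^j [\sigma^\HH_{(j)_i}]$ in $H^{4j}(\Fl_\D(\HH^n))$. Theorem \ref{thm:quatflagcirclespace} applied to this one-row class then yields the identity. As a cross-check, one may verify it by direct equivariant computation: over $X^\Ga$, the decomposition $\HH = \C \oplus \C j$ together with the weight $-2$ action of $\U(1)$ on the $j$-factor gives $S_i^\HH|_{X^\Ga}$ the equivariant Chern roots $\alpha_k$ and $-\alpha_k - 2u$ (where $\alpha_k$ are the Chern roots of $S_i^\C$), and extracting the coefficient of $u^j$ in the equivariant quaternionic Pontryagin class $q_j^\Ga$ yields $2^j e_j(\alpha) = 2^j c_j(S_i^\C)$.

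With the identity in hand, suppose $[\sigma_I^\HH] = q(p_*(S_i^\HH))$. Since $\deg p_j = 4j$ and $\deg[\sigma_I^\HH] = 4|I|$, every monomial $\prod_{k} p_{j_k}$ occurring in $q$ satisfies $\sum_k j_k = |I|$. Multiplicativity of $\kappa$ then gives
\[ \kappa[\sigma_I^\HH] = q(\kappa\, p_*(S_i^\HH)) = 2^{|I|}\, q(c_*(S_i^\C)). \]
Combined with $\kappa[\sigma_I^\HH] = 2^{|I|}[\sigma_I^\C]$ from Theorem \ref{thm:quatflagcirclespace}, dividing by $2^{|I|}$ gives $[\sigma_I^\C] = q(c_*(S_i^\C))$. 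The reverse direction is immediate after noting that $\kappa$ restricts to a (rescaled) isomorphism of the subring generated by the Pontryagin classes $p_j(S_i^\HH)$ onto the subring generated by the Chern classes $c_j(S_i^\C)$; this bijection propagates the equivalence of the two Giambelli-type presentations.

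The principal obstacle is verifying the identity $\kappa p_j(S_i^\HH) = 2^j c_j(S_i^\C)$. Both routes require some care: the Schubert route depends on the quaternionic analogue of the classical expression of $c_j$ as a one-row Schubert class, while the equivariant route requires careful bookkeeping of the conjugate complex structure on $V \cdot j$ together with the $\U(1)$-weight $-2$ on the $j$-factor. Everything else is a formal degree-counting argument inherited from the real case.
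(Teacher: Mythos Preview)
Your proposal is correct and follows exactly the approach the paper indicates: the paper gives no explicit proof for this corollary, but the sentence preceding the real analogues (Corollaries \ref{cor:realLR}--\ref{cor:realrelations}) lists precisely the three ingredients you use---the circle space theorem, multiplicativity of $\kappa$, and the identity $\kappa\,p_j = 2^j c_j$---and the quaternionic case is handled identically. One small caution: in your Schubert-class route, the Chern classes of the tautological \emph{sub}bundle are one-\emph{column} Schubert classes rather than one-row (your $(j)_i$), but this does not affect the argument, and your second (equivariant) route, which parallels Example~\ref{ex:realGrassmannian}, is in any case the more robust verification.
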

In the case of Grassmannians, this was already noticed by Pragacz and Ratajski \cite{PragaczRatajski1997}; as they remark, the proof of the Pieri formula in \cite{GriffithsHarris1978} can be replicated in the quaternionic case implying the same description of the cohomology rings (complex and quaternionic) with degrees doubled.

\subsubsection{Octonionic flag manifolds}\label{subsec:octoflag}
In this section we give some examples for quaternionic halving spaces, i.e.\ halving spaces for $\Sp(1)$-actions: the octonionic flag manifolds. We will also show that they are circle spaces by restricting to $\U(1)\leq \Sp(1)$.

By octonionic flag manifolds we mean the following three examples: $\OP^1, \OP^2, \Fl(\O)(=\Fl(\O^3))$. Nonassociativity of octonions leads to the fact that there are no octonionic analogues of higher dimensional flag manifolds. We refer to \cite{Baez2002}, \cite{Freudenthal1985}, \cite{Eschenburg}, \cite{MareWillems2013} for further details about octonionic flag manifolds.

Since $\OP^1\iso S^8$ which is easily seen to be both a circle space and a quaternionic halving space, we start with $\OP^2$. In the case of $\OP^2$ a purely topological proof can be given using Hopf fibrations.
\begin{proposition}
	The Hopf fibrations are $\Ga$-equivariant principal $G$-bundles where the $\Ga$-action is induced by the inner automorphisms defined in Section \ref{subsec:Galois}. Furthermore, the $\Ga$-fixed point set  of each Hopf fibration is the previous one:
	\begin{itemize}
		\item $\pi_2:S^1\to S^3\to S^2$ is $\Ga=\Z_2$-equivariant with fixed point set $\pi_1:S^0\to S^1\to S^1$
		\item $\pi_3:S^3\to S^7\to S^4$ is $\Ga=\U(1)$-equivariant with fixed point set $\pi_2:S^1\to S^3\to S^2$
		\item $\pi_4:S^7\to S^{15}\to S^8$ is $\Ga=\Sp(1)$-equivariant with fixed point set $\pi_3:S^3\to S^7\to S^4$		
	\end{itemize}
\end{proposition}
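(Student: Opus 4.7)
The plan is to realize each Hopf fibration in a uniform algebraic way as a quotient of the unit sphere in $\F^2$ by right scalar multiplication, where $\F$ runs through $\C,\HH,\O$, and then to let the corresponding Galois-type $\Ga$ act diagonally by inner automorphism on $\F^2$. Concretely, write
\[ S^{2\dim_\R \F -1} = \{(a,b)\in \F^2:|a|^2+|b|^2=1\}, \qquad S^{\dim_\R\F-1}=\{w\in \F:|w|=1\},\]
with the Hopf projection $\pi_\F(a,b)=[a:b]\in \F P^1\iso S^{2\dim_\R\F}$ and the structure group acting by $(a,b)\cdot w=(aw,bw)$. For each Galois pair I will define the $\Ga$-action on all three spaces as the restriction of $(a,b)\mapsto (uau^{-1},ubu^{-1})$ and $w\mapsto uwu^{-1}$, with $u\in \Ga\se \F$, which by Section \ref{subsec:Galois} is a $\Ga$-action on $\F$ by $\R$-algebra automorphisms.

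First I would verify equivariance as principal bundles. Since conjugation by $u$ is an $\R$-algebra automorphism, it preserves the norm, hence the spheres; it is $\R$-linear in each coordinate, hence acts on $\F^2$ diagonally and on $\F P^1$ via $[a:b]\mapsto [uau^{-1}:ubu^{-1}]$, making $\pi_\F$ $\Ga$-equivariant. Compatibility with the principal $G$-action follows from the computation
\[ u\cdot\big((a,b)\cdot w\big)=(u(aw)u^{-1},u(bw)u^{-1})=\big(u\cdot(a,b)\big)\cdot(u\cdot w),\]
where I use that $u$ is an automorphism, i.e.\ $u(aw)u^{-1}=(uau^{-1})(uwu^{-1})$. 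For $\F=\C,\HH$ this is trivial (associative case). For $\F=\O$ with $u\in \Sp(1)\se \HH$, one needs that conjugation by a unit quaternion is an automorphism of $\O$; this follows from the Moufang identities since $u,u^{-1}\in \HH$ associate with every octonion, and is the point recalled in Section \ref{subsec:Galois}.

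Next I would compute the fixed point sets. By Section \ref{subsec:Galois}, $\F^\Ga=\F_{i-1}$ is exactly the preceding division algebra. Hence the fixed points of the diagonal action on $\F^2$ are $(\F_{i-1})^2$, giving $(S^{2\dim_\R\F-1})^\Ga=S^{2\dim_\R\F_{i-1}-1}$; the fixed fiber is $(S^{\dim_\R\F-1})^\Ga=S^{\dim_\R\F_{i-1}-1}$; and for the base, $[a:b]\in \F P^1$ is fixed iff $(a,b)$ and $(uau^{-1},ubu^{-1})$ are proportional for every $u\in \Ga$, which (normalizing $b=1$ or setting $b=0$) forces $a\in \F^\Ga=\F_{i-1}$, so $(\F P^1)^\Ga=\F_{i-1}P^1\iso S^{2\dim_\R\F_{i-1}}$. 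These match the three spheres in $\pi_{i-1}$, and the equivariant projection restricts on fixed sets to $\pi_{i-1}$.

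The one delicate point is the octonionic fibration $\pi_4$: I need both that inner conjugation by $\Sp(1)$ gives a well-defined algebra automorphism of the nonassociative $\O$ (handled via Moufang, as above) and that the associated bundle formulas such as $u(aw)u^{-1}=(uau^{-1})(uwu^{-1})$ remain valid in $\O$. Since $u,u^{-1}\in\HH$, all triples appearing have at least two entries in the associative subalgebra $\HH$, so associators vanish and the identities go through verbatim. Modulo this check — which I expect to be the main obstacle — everything else is a uniform diagram chase applied to the three values $i=2,3,4$.
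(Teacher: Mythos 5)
Your overall strategy coincides with the paper's (one-sentence) proof: the Hopf fibration $S^{\dim_\R\F-1}\to S^{2\dim_\R\F-1}\to \F P^1$ is built entirely from the division algebra structure of $\F$, hence is equivariant for any subgroup of $\Aut(\F)$, and one restricts to the Galois subgroups of Section \ref{subsec:Galois} and reads off the fixed sets from $\F^\Ga=\F_{i-1}$. The gap is that your uniform formula $x\mapsto uxu^{-1}$ is the correct Galois action only in the middle case $\F=\HH$. For $\F=\C$ the inner automorphisms are trivial (commutativity), so your action fixes all of $\pi_2$ rather than $\pi_1$; the action of Section \ref{subsec:Galois} here is complex conjugation $z\mapsto\bar z$, an \emph{outer} automorphism. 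For $\F=\O$ the problem is worse on two counts. First, $c_u(x)=uxu^{-1}$ for $u\in\Sp(1)\se\HH$ is in general not an algebra automorphism of $\O$: the identity $u(xy)u^{-1}=(uxu^{-1})(uyu^{-1})$ involves three independent generators $u,x,y$, so Artin's theorem does not apply, and a Cayley--Dickson computation gives $c_u(a+b\ell)=uau^{-1}+(bu^2)\ell$, which is multiplicative only when $u^3\in\R$. Second, even on the subgroup where $c_u$ is an automorphism, the joint fixed set of conjugation by $\Sp(1)$ in $\O$ is $\R$ (one needs $uau^{-1}=a$ for all $u$, forcing $a\in\R$, and $bu^2=b$, forcing $b=0$), not $\HH$; so your fixed-point fibration for $\pi_4$ would come out as $S^0\to S^1\to S^1$, not $\pi_3$.

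The group that Section \ref{subsec:Galois} actually invokes for $\O$ is the pointwise stabilizer of $\HH$ in $\Aut(\O)$, isomorphic to $\Sp(1)$ but given in Cayley--Dickson coordinates by maps such as $a+b\ell\mapsto a+(ub)\ell$, whose joint fixed set is exactly $\HH$. (The proposition's phrase ``inner automorphisms'' is loose; only the quaternionic case is genuinely inner.) Once you substitute the correct actions --- $z\mapsto\bar z$ on $\C$, conjugation by $\U(1)$ on $\HH$, and the pointwise $\HH$-stabilizer on $\O$ --- the remainder of your argument (diagonal action on $\F^2$, compatibility with the structure group action, and the fixed-set computation from $\F^\Ga=\F_{i-1}$) is sound and fills in the details the paper leaves implicit.
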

\begin{proof}
	The definition of these bundles involves the division algebra structure of $\F$, so they are naturally $\Aut(\F)$-equivariant.
\end{proof}

\begin{corollary}
	$\OP^2$ is a halving space for both the $\Sp(1)$-action and the $\U(1)$-action, with fixed point set $\HP^2$.
\end{corollary}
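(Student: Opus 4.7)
The plan is to apply the generalized Borel-Haefliger theorem (Theorem \ref{thm:BHI} for the pair $(\U(1),\Q)$, and its $\Sp(1)$-variant described in Remark \ref{rmk:BHI} ii)) to an explicit basis of halving cycles coming from the standard cell structure of $\OP^2$. By the preceding proposition, the Hopf attaching map $\pi_4\colon S^{15}\to S^8$ is $\Ga$-equivariant with fixed-point Hopf fibration $\pi_3$, so the induced $\Ga$-action on $\OP^2\simeq S^8\cup_{\pi_4}D^{16}$ has fixed point set $S^4\cup_{\pi_3}D^8=\HP^2$ for both halving pairs $(\U(1),\Q)$ and $(\Sp(1),\Q)$.

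First I would take the three candidate halving cycles $Z_0=\{\mathrm{pt}\}$, $Z_1=\OP^1$ and $Z_2=\OP^2$. Each is a smooth closed $\Ga$-invariant (hence orientable) submanifold; their classes span $H^*(\OP^2;\Q)=\Q[x]/(x^3)$, and their $\Ga$-fixed point sets $\{\mathrm{pt}\},\HP^1,\HP^2$ are smooth submanifolds of $\HP^2$ whose classes span $H^*(\HP^2;\Q)=\Q[y]/(y^3)$. The codimension pairs $(0,0)$, $(8,4)$, $(16,8)$ are all of the form $(2Dk,Dk)$ for both $D=2$ (with $k=0,2,4$) and $D=4$ (with $k=0,1,2$), so each $Z_i$ has the correct codimension type in either setting.

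The main technical point is to show that each $Z_i$ is a \emph{good} halving cycle, i.e.\ that the excess multiplicity $\mu_{Z_i}$ is nonzero. Using the Cayley-Dickson presentation $\O=\HH\oplus \HH\ep$, the $\Sp(1)$-action of Section \ref{subsec:Galois} fixes $\HH$ pointwise and acts on $\HH\ep$ by $b\mapsto qb$, so the normal representation $\nu_p(\HP^2\hookrightarrow \OP^2)$ at any $p\in \HP^2$ is two copies of the standard $\Sp(1)$-representation on $\HH$. Restricting to $\U(1)\subseteq \Sp(1)$ and using $\HH=\C\oplus \C j$, this becomes four copies of the weight-$1$ representation of $\U(1)$. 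Hence every normal weight of the halving space $\OP^2$ equals $1$, each excess bundle $\eta_{Z_i}$ inherits only weight-$1$ summands, and $\mu_{Z_i}=1$ is independent of the chosen fixed point.

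Finally, for the $\Sp(1)$ case the additional hypotheses of Remark \ref{rmk:BHI} ii) must be checked: the $Z_i^\Ga$ are cycles, which is immediate as smooth closed submanifolds of $\HP^2$, and the localization theorem must hold. The latter follows from $\U(1)$-localization via the equality $(\OP^2)^{\U(1)}=(\OP^2)^{\Sp(1)}=\HP^2$ (forced by the weight-$1$ computation above), combined with the ring embedding $H_{\Sp(1)}^*=\Q[v]\hookrightarrow \Q[u]=H_{\U(1)}^*$ sending $v\mapsto u^2$, which turns the $\U(1)$-localization isomorphism into an $\Sp(1)$-localization isomorphism. With these verifications the generalized Borel-Haefliger theorem and its $\Sp(1)$-variant apply, establishing the corollary in both cases.
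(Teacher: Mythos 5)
Your proposal is correct and follows essentially the same route as the paper: the paper's (sketched) proof also uses the equivariant Hopf fibration to identify $\OP^2 = D^{16}\coprod_{\pi_4}S^8$ with fixed point set $\HP^2 = D^8\coprod_{\pi_3}S^4$, and then concludes from the resulting cell decomposition (explicitly offering, as an alternative, exactly your route of adapting the Borel--Haefliger argument with the cells $\{\mathrm{pt}\},\OP^1,\OP^2$ as halving cycles). Your write-up usefully fills in the details the paper leaves implicit — the weight computation $\nu(\HP^2\subset\OP^2)\cong \HH\oplus\HH$ with all $\U(1)$-weights equal to $1$, and the verification of the extra $\Sp(1)$ hypotheses of Remark \ref{rmk:BHI}(ii) — whereas the paper defers the localization statement to a citation of tom Dieck rather than deducing it from the $\U(1)$-case.
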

\begin{proof}[Proof sketch]
	The projective planes $\F\PP^2$ can be obtained by gluing along the Hopf fibrations:
	$$\RP^2=D^2\coprod_{\pi_1}S^1,\quad \CP^2=D^4\coprod_{\pi_2}S^2,\quad \HP^2=D^8\coprod_{\pi_3}S^4,\quad \OP^2=D^{16}\coprod_{\pi_4}S^8$$
	and the $\Ga$-action descends to the projective planes, with fixed point set the the previous one. From the naturally occurring cell decompositions we get that each space is a halving space with the fixed point set the previous one, in particular we get that $\OP^2$ is a quaternionic halving/circle space. Alternatively, one can adapt the proof of the next section.
\end{proof}

$\OP^2$ has a description by (restricted) homogeneous coordinates as follows. The \emph{points of $\OP^2$} are triples $(a,b,c)\in \O^3$, such that at least one of them is real, modulo the relation that two such elements are equal if they differ by left multiplication by an element of $\O$. The \emph{lines of $\OP^2$} are defined similarly (denoted $(\OP^2)^*$), but now the equivalence relation is right multiplication. A point $x=(x_1,x_2,x_3)\in \OP^2$ \emph{is incident to the line} $l=(l_1,l_2,l_3)\in (\OP^2)^*$ denoted $x\in l$, if $x_1 l_1+x_2l_2+x_3l_3=0$ for representatives chosen such that at least two of the sets $\{x_i,l_i\}$ contain a real number. The \emph{flag manifold $\Fl(\O)$} can be defined as the set of incident point-lines:
$$ \Fl(\O):=\{(x,l): x\in l\}\se \OP^2\times (\OP^2)^*.$$
\begin{remark}
	The description in terms of coordinates is in fact isomorphic to the usual model of $\OP^2$ by the exceptional Jordan algebra $\hh_3(\O)$, see \cite{Freudenthal1964}, \cite{OnishchikVinberg1994}, \cite{MareWillems2013}. These identifications are due to \cite{Aslaksen1991}, \cite{Allcock1997}, see also \cite[Theorem 7.2]{Rosenfeld1997}, \cite{CsikosKiss2011}.
\end{remark}

The automorphisms of the normed algebra $\O$ fixing $\HH$ (for further details, see \cite[Proposition B.4.1.]{thesis}) induces a coordinate-wise action on $Y=\OP^2$ with fixed point set $Y^\Ga=\HP^2$. Since the action is compatible with the incidence relation, it also induces an action on $X=\Fl(\O)$ with fixed point set $X^\Ga=\Fl(\HH^3)$.

\begin{theorem}\label{thm:octoflagcirclespace}
	With the $\Ga=\Sp(1)$-action defined above, $\Fl(\O)$ is a quaternionic halving space, with fixed point set $\Fl(\HH^3)$.
	Furthermore
	$$ \ka[\si_w^\O]=[\si_w^\HH]$$
	where $w\in S_3$ and $[\si_w^\HH]\in H^{2|w|}(\Fl(\HH^3))$.
\end{theorem}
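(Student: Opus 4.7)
The plan is to apply the $\Sp(1)$-version of the generalized Borel-Haefliger theorem (Theorem~\ref{thm:BHI} together with Remark~\ref{rmk:BHI}~ii)), closely following the template used for the quaternionic flag manifolds in Theorem~\ref{thm:quatflagcirclespace}. Concretely, I would need to exhibit a $\Q$-cohomology basis of $H^*(\Fl(\O))$ by good $\Sp(1)$-invariant cycles $\si_w^\O$ indexed by $w\in S_3$, verify that they are halving cycles of codimension type $(8|w|,4|w|)$ with fixed point sets equal to the quaternionic Schubert cycles $\si_w^\HH\se\Fl(\HH^3)$, compute that each excess multiplicity equals $1$, and verify the $\Sp(1)$-localization hypothesis required by Remark~\ref{rmk:injectivitylemma}~iii).

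First I would invoke the octonionic Schubert cell decomposition of $\Fl(\O)$ indexed by $w\in S_3$ from the cited literature (e.g.\ \cite{MareWillems2013}, \cite{Eschenburg}); its cells have real codimensions $8|w|$, so the cohomology is concentrated in degrees divisible by $D=4$, and $\dim\Fl(\O)=24=2\dim\Fl(\HH^3)$, as expected for the injectivity lemma. I would fix a reference incident pair consisting of an $\HH$-point and an $\HH$-line in $\OP^2\times(\OP^2)^*$; since the $\Sp(1)$-action fixes $\HH\se\O$ pointwise and respects the octonionic incidence relation, the resulting Schubert varieties $\si_w^\O$ are $\Sp(1)$-invariant. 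The identification $(\si_w^\O)^\Ga=\si_w^\HH$ reduces, via $\O^\Ga=\HH$, to the statement that a fixed octonionic flag is one admitting an $\HH$-representative; combined with the Schubert stratification and Example~\ref{ex:stratifiedgood}, this shows each $\si_w^\O$ is a good $\Sp(1)$-invariant subvariety of codimension type $(8|w|,4|w|)$, matching the halving-cycle dimensions.

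The excess-multiplicity computation mirrors the one in Theorem~\ref{thm:quatflagcirclespace}: tangent and normal spaces of the Schubert stratification at $\HH$-fixed flags can be written in terms of $\Hom_\R$-spaces between octonionic lines, so the local computation reduces to analysing the $\Sp(1)$-representation $\Hom_\R(\HH,\O/\HH)$. By the Cayley-Dickson description of $\O$ the $\Sp(1)$-action on $\O/\HH\iso\HH$ is standard left multiplication, whose $\U(1)$-weights are $\{1,1\}$ in the paper's convention; hence every normal weight of $\Fl(\HH^3)\inj\Fl(\O)$ equals $1$, giving $\mu_w=1$ uniformly and the stated formula. The main obstacle is the nonassociativity of $\O$: the very definition of the incidence variety, of its Schubert cells, and the identification of their tangent spaces in terms of $\Hom_\R$-spaces all require extra care and essentially force the use of the Jordan-algebra model of $\OP^2$ due to Freudenthal. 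A secondary issue is the $\Sp(1)$-localization hypothesis, but since $\Fl(\O)$ is a compact smooth $\Sp(1)$-manifold whose rational cohomology is additively generated by classes of $\Sp(1)$-invariant cycles, $\Sp(1)$-equivariant formality holds and the standard localization theorem supplies the hypothesis of Remark~\ref{rmk:injectivitylemma}~iii).
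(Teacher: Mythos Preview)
Your proposal is correct and follows essentially the same route as the paper: invoke the $\Sp(1)$-version of the generalized Borel--Haefliger theorem via Remark~\ref{rmk:BHI}~ii), use the Schubert decomposition of $\Fl(\O)$ with respect to a quaternionic reference flag to produce $\Sp(1)$-invariant halving cycles with fixed parts $\si_w^\HH$, and compute that the normal $\Sp(1)$-representation $\O/\HH$ is the defining representation so that the excess multiplicity is $1$. The only place the paper is more precise than your sketch is the localization hypothesis: equivariant formality alone does not automatically yield the $\Sp(1)$-localization isomorphism needed in Remark~\ref{rmk:injectivitylemma}~iii), so the paper appeals directly to \cite[Theorem~III.3.8]{TomDieck} (see also \cite[Theorem~1.3]{MareWillems2013}) rather than deducing it from formality.
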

\begin{proof}
 For $d_\bullet\in \Fl(\O)^\Ga=\Fl(\HH^3)$, the flag manifold $\Fl(\O)$ has a decomposition into $\Ga$-invariant Schubert $8i$-cells $\Om^\O_w(d_\bullet)$, defined by incidence relations, whose fixed point sets are $\Om_w^\HH(d_\bullet)$. In particular, the closures of the Schubert cells are $\Ga$-invariant halving cycles $\si_w^\O(d_\bullet)$ by a dimension count. To see that the $\Sp(1)$-multiplicity of the normal space $\nu(\HH\se \O)$ equals 1, note that the normal $\Sp(1)$-representation is its defining representation, since it acts freely and transitively on $S^3$ (see \cite[Proposition B.4.1.]{thesis}).
 The conditions of the generalized Borel-Haefliger theorem for $\Ga=\Sp(1)$ have to be checked according to Remark \ref{rmk:BHI} ii). First, the Schubert cycles are $\Sp(1)$-invariant halving cycles and their fixed point sets are cycles (this is straightforward). Second, $\Fl(\O)$ satisfies the localization theorem for $\Sp(1)$ by \cite[Theorem III.3.8.]{TomDieck}, see also \cite[Theorem 1.3]{MareWillems2013}.
\end{proof}
\begin{remark}\mbox{}

	\begin{itemize}\itemsep0em
		\item[i)] By the general theory, $\Fl(\O)$ has a Bruhat cell decomposition as $N$-orbits, see \cite{DuistermaatKolkVaradarajan1983}, \cite{MareWillems2013}. This agrees with the Schubert cell decomposition---this can be verified through the Jordan algebra model of $\OP^2$.
		\item[ii)] These examples are also examples of circle spaces. Indeed, one can restrict the action of $\Ga=\Sp(1)$ to a $\De=\U(1)$-action, such that $\O^\De=\HH$, and then the rest of the proofs are the same.
	\end{itemize}
	\end{remark}

\subsection{Quaternionic toric varieties}

In the case of $\Ga=\Z_2$, smooth toric manifolds are conjugation spaces \cite[Example 8.7]{HausmannHolmPuppe2005}. This example has a generalization in the context of circle spaces; quaternionic toric varieties introduced by Scott \cite{Scott1995}, which come naturally equipped with an $\SO(3)$-action. The cohomology of a nonsingular quaternionic toric space is generated by geometric cycles of degrees $4i$ \cite[p.\ 43]{Scott1993} and is degree-doubling isomorphic to its complex counterpart \cite[Theorems 3.3.2. and 5.5.1]{Scott1993}, and therefore are circle spaces.

\subsection{Constructions} \label{subsec:constr} We can construct new halving spaces out of old ones. Most ideas of \cite{HausmannHolmPuppe2005} about conjugation spaces can be adapted, however there are some new features.

Given a $\U(1)$-action $\rho:\U(1)\to \operatorname{Homeo}(X)$ on a space $X$ we can \emph{rescale} the action by composing it with $z\mapsto z^k$. If the action is already in this form, then we can also \emph{downscale} it: For example the $\U(1)$-action on the even real flag manifolds and the quaternionic flag manifolds have the property that $-1\in \U(1)$ acts trivially, by downscaling we can get a new action with all normal weights equal to 1. It is elementary to check that rescaling a circle space provides a circle space.
With the same proof as in \cite[Proposition 4.5]{HausmannHolmPuppe2005} one can show that
\begin{proposition} Suppose that $X$ and $Y$ are halving spaces and $H^q(X;R)$ has finite rank for all $q$. Then $X\times Y$ is also a halving space. \end{proposition}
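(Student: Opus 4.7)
The plan is to imitate the argument of \cite[Proposition 4.5]{HausmannHolmPuppe2005}, with a small extra step to verify that the product section still satisfies the \emph{sharp} degree condition (DC) rather than only (DC\textsuperscript{--}). Give $X\times Y$ the diagonal $\Ga$-action; its fixed point set is $X^\Ga\times Y^\Ga$. Since $H^*(X;R)$ has finite rank in each degree and $R$ is a field, the K\"unneth theorem gives a graded ring isomorphism $H^*(X\times Y)\cong H^*(X)\otimes_R H^*(Y)$, concentrated in degrees $2Dk$ because each factor is. The analogous isomorphism for $H^*_\Ga(X^\Ga\times Y^\Ga)\cong H^*(X^\Ga)[u]\otimes_{R[u]}H^*(Y^\Ga)[u]$ holds as well.

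First I would handle equivariant formality of $X\times Y$. The Leray--Hirsch sections $\si_X,\si_Y$ exhibit $H^*_\Ga(X)\cong H^*(X)\otimes_R H^*_\Ga$ and $H^*_\Ga(Y)\cong H^*(Y)\otimes_R H^*_\Ga$ as $H^*_\Ga$-modules, so the Eilenberg--Moore (or, concretely, Leray--Hirsch) argument applied to the Borel fibration of $X\times Y$ gives
\[
H^*_\Ga(X\times Y)\ \cong\ H^*_\Ga(X)\otimes_{H^*_\Ga}H^*_\Ga(Y).
\]
Denote by $\pi_X,\pi_Y$ the two projections from $X\times Y$ and from its Borel construction. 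Define $\si=\si_{X\times Y}$ on decomposable elements by
\[
\si(x\otimes y)\ :=\ \pi_X^{\Ga*}\si_X(x)\cdot \pi_Y^{\Ga*}\si_Y(y),
\]
and extend $R$-linearly. That $\rho\circ\si=\mathrm{id}$ follows by naturality of $\rho$ under $\pi_X,\pi_Y$ and multiplicativity, so $\si$ is a Leray--Hirsch section.

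The step that needs care is the degree condition. For $x\in H^{2Di}(X)$ and $y\in H^{2Dj}(Y)$, multiplicativity of restriction together with (DC) for $X$ and $Y$ gives
\[
r\si(x\otimes y)=r\si_X(x)\cdot r\si_Y(y)=\bigl(\ka_X(x)u^i+\cdots\bigr)\bigl(\ka_Y(y)u^j+\cdots\bigr),
\]
where the unwritten terms have strictly lower $u$-degree and we identify $H^*(X^\Ga\times Y^\Ga)$ with $H^*(X^\Ga)\otimes H^*(Y^\Ga)$ via K\"unneth. The coefficient of $u^{i+j}$ is $\ka_X(x)\otimes\ka_Y(y)$. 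For a general element $\xi\in H^{2Dn}(X\times Y)$, write $\xi=\sum_{i+j=n}x_\al\otimes y_\al$ and obtain that the coefficient of $u^n$ in $r\si(\xi)$ equals $(\ka_X\otimes\ka_Y)(\xi)$ under K\"unneth on $X^\Ga\times Y^\Ga$. Since $\ka_X$ and $\ka_Y$ are injective by (DC) on the factors and $R$ is a field, $\ka_X\otimes\ka_Y$ is injective, so this coefficient is nonzero whenever $\xi\neq0$. Hence (DC) holds for $X\times Y$ with cohomology frame $(\ka_X\otimes\ka_Y,\si)$.

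The main (mild) obstacle is exactly the last step: passing from (DC\textsuperscript{--})—which is automatic once the sections are multiplied—to (DC). This is why the field hypothesis on $R$ (implicit in our standing halving pairs $(\Z_2,\F_2)$, $(\U(1),\Q)$, $(\Sp(1),\Q)$) and the finite rank assumption on $H^q(X;R)$ matter: together they make the K\"unneth map a graded isomorphism and keep tensor products of injective maps injective. Alternatively, one can invoke the injectivity lemma (Lemma \ref{lemma:injectivity}) to upgrade (DC\textsuperscript{--}) to (DC) when $X\times Y$ is a compact orientable $\Q$-Poincar\'e duality space, but the direct argument above works without any such geometric assumption.
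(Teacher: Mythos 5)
Your argument is correct and is essentially the proof the paper has in mind: the paper simply invokes \cite[Proposition 4.5]{HausmannHolmPuppe2005}, whose proof is exactly your construction (product of the two Leray--Hirsch sections via the projections, K\"unneth, and injectivity of $\ka_X\otimes\ka_Y$ over the field $R$ to upgrade the degree bound to the sharp condition (DC)). The only cosmetic difference is that you spell out the passage from (DC\textsuperscript{--}) to (DC), which in the halving-space setting is indeed the point where finite rank and field coefficients are used.
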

Combining product with rescaling we can construct circle spaces with prescribed normal weights. For example rescale the circle space $S^4$ of Proposition \ref{prop:s4n} with given integers, and take the product of these. With the same proof as in \cite[Proposition 4.6]{HausmannHolmPuppe2005} one can show that
\begin{proposition}\label{prop:directlimit}
	Let $(X_i,f_{ij})$ be a direct system of halving spaces which are $T_1$ and $f_{ij}$ are $\Ga$-equivariant inclusions. Then $X=\varinjlim_{i} X_i$ is a halving space with cohomology frame $(\varprojlim \ka_i, \varprojlim \si_i)$.
\end{proposition}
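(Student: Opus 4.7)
\smallskip

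\noindent\textbf{Proof proposal.} The plan is to mimic the argument of \cite[Proposition 4.6]{HausmannHolmPuppe2005} by constructing the cohomology frame as an inverse limit and checking that each defining property of a halving space is preserved under this process. The $T_1$ and closed-inclusion hypotheses are used to guarantee that $X$ carries the weak topology with respect to the $X_i$, so that (co)homology behaves as expected.

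The first step is to identify $X^\Ga$. Since each $f_{ij}$ is a $\Ga$-equivariant inclusion, one has $f_{ij}^{-1}(X_j^\Ga)=X_i^\Ga$, so $(X_i^\Ga,f_{ij}|)$ is a direct subsystem and $X^\Ga=\varinjlim_i X_i^\Ga$. Next, because the Borel construction commutes with direct limits of $\Ga$-equivariant inclusions, $B_\Ga X=\varinjlim_i B_\Ga X_i$, and a standard Milnor-type argument identifies
\[
H^*(X;R)\iso \varprojlim_i H^*(X_i;R),\qquad H_\Ga^*(X;R)\iso \varprojlim_i H_\Ga^*(X_i;R),
\]
provided the towers satisfy the Mittag-Leffler condition. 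This is automatic here: naturality of the cohomology frames (Corollary \ref{cor:naturality}) gives $f_{ij}^*\circ\si_j=\si_i\circ f_{ij}^*$, and the existence of the sections $\si_i$ forces the restriction maps $f_{ij}^*$ in both ordinary and equivariant cohomology to be surjective onto their images in a compatible way, killing $\varprojlim^1$. Restriction to fixed points is similarly compatible with the directed system, so the restriction maps $r_i$ assemble into $r=\varprojlim r_i$.

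With these identifications, naturality of $(\ka_i,\si_i)$ means exactly that they define morphisms of inverse systems, and so one obtains well-defined $R$-module homomorphisms
\[
\si:=\varprojlim_i\si_i\colon H^*(X)\to H_\Ga^*(X),\qquad \ka:=\varprojlim_i\ka_i\colon H^{2*}(X)\to H^*(X^\Ga).
\]
That $H^*(X)$ is concentrated in degrees $2Di$ is inherited from the $X_i$, and $\rho\si=\id$ follows by passing to the inverse limit. To check the degree condition (DC), fix $x\in H^{2Di}(X)$ and write $x=(x_i)_i$ with $x_i\in H^{2Di}(X_i)$. Since each $r_i\si_i(x_i)=\ka_i(x_i)u^i+\ldots$ has $u$-degree exactly $i$, and the isomorphism $H_\Ga^*(X^\Ga)\iso H^*(X^\Ga)[u]$ is compatible with the inverse limit, the polynomial $r\si(x)$ also has leading term $\ka(x)u^i$ with $\ka(x)\neq 0$ (injectivity of $\ka$ follows from injectivity of each $\ka_i$ together with the identification of $H^*(X^\Ga)$ as an inverse limit).

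The principal obstacle is the control of $\varprojlim^1$ terms: without it, neither ordinary nor equivariant cohomology of $X$ would be computed by the inverse limit, and both $\si$ and the degree condition could fail. The key observation is that the very existence of Leray--Hirsch sections $\si_i$ compatible under $f_{ij}^*$ turns the structure maps of our towers into split epimorphisms in each fixed degree (once restricted to the image), so the Mittag--Leffler condition holds tautologically. Everything else is a routine bookkeeping exercise of taking inverse limits along naturality squares.
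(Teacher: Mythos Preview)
Your overall strategy—identify $X^\Ga$ as the colimit of the $X_i^\Ga$, use naturality (Corollary~\ref{cor:naturality}) to assemble the $\si_i$ and $\ka_i$ into morphisms of inverse systems, and verify (DC) componentwise—is precisely what the paper intends by citing \cite[Proposition~4.6]{HausmannHolmPuppe2005}.

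The genuine gap is your handling of $\varprojlim^1$. Both justifications you offer are vacuous: every linear map is surjective onto its image, and over a field every such surjection splits, so neither statement yields the Mittag--Leffler condition (which asks that for each $i$ the decreasing chain of images $\im(f_{ij}^*)\subseteq H^*(X_i)$ eventually stabilises in $j$). Moreover, the sections $\si_i$ split $\rho_i:H_\Ga^*(X_i)\to H^*(X_i)$, not the transition maps $f_{ij}^*$; their existence places no constraint on how the images of $f_{ij}^*$ behave as $j$ grows.

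The clean fix exploits that the coefficient ring $R$ is a field in all cases under consideration. Universal coefficients gives a natural isomorphism $H^n(-;R)\cong\Hom_R(H_n(-;R),R)$, and singular homology commutes with the directed colimit along the inclusions (this is where the $T_1$ hypothesis enters: every compact subset of $X=\varinjlim_i X_i$ lies in some $X_i$). Hence
\[
H^n(X;R)\;\cong\;\Hom_R\!\bigl(\varinjlim_i H_n(X_i;R),\,R\bigr)\;\cong\;\varprojlim_i H^n(X_i;R),
\]
and the same argument applies verbatim to $B_\Ga X=\varinjlim_i B_\Ga X_i$ and to $X^\Ga=\varinjlim_i X_i^\Ga$. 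This identifies all three cohomology towers with their inverse limits directly, with no Mittag--Leffler verification needed. Once that is in place, the remainder of your argument goes through as written.
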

Conjugation manifolds of the same dimension are locally isomorphic at a fixed point. This is no longer true for circle spaces, they are locally isomorphic at a fixed point only if they have the same normal weights. So for connected sums, \cite[ Proposition 4.7]{HausmannHolmPuppe2005} has to be modified:
\begin{proposition} Suppose that $X$ and $Y$ are circle manifolds having  the same normal weights. Then the connected sum $X\# Y$ is also a circle manifold.\end{proposition}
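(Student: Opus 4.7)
The strategy is to adapt the proof of the analogous statement for conjugation spaces in \cite[Proposition 4.7]{HausmannHolmPuppe2005}, inserting the extra care needed to match the local $\Ga$-representations at the gluing fixed points.

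First I would apply the equivariant tubular neighborhood theorem at chosen fixed points $x \in X^\Ga$ and $y \in Y^\Ga$, producing $\Ga$-invariant open disks $D_X \subset X$ and $D_Y \subset Y$ equivariantly diffeomorphic to the tangent $\Ga$-representations $T_x X$ and $T_y Y$. The hypothesis that $X$ and $Y$ have the same normal weights (Definition \ref{def:excessmultiplicity}) means these two $\Ga$-representations are isomorphic, so the equivariant connected sum $X \# Y$ can actually be formed: excise the interiors of $D_X$ and $D_Y$ and glue along an equivariant orientation-reversing diffeomorphism of the boundary $\Ga$-sphere $S := \partial D_X \iso \partial D_Y$. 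This is the single point where the weight hypothesis enters: without it the equivariant gluing does not even make sense. At the level of fixed points one obtains $(X \# Y)^\Ga = X^\Ga \# Y^\Ga$, connected-summed along the fixed-point sphere $S^\Ga$.

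The main body of the proof then proceeds by an equivariant Mayer--Vietoris comparison, using a thickening $X \# Y = A \cup B$ where $A$ deformation retracts $\Ga$-equivariantly onto a punctured copy of $X$ and $B$ onto a punctured copy of $Y$, with $A \cap B$ equivariantly homotopy equivalent to $S$. The key point, as in the conjugation-space argument, is that the cohomology frames $(\ka_X, \si_X)$ and $(\ka_Y, \si_Y)$ restrict to compatible data on $A$ and $B$ by naturality (Corollary \ref{cor:naturality}), and that their further restrictions to the overlap $A \cap B \simeq S$ are forced to coincide by the degree lemma (Lemma \ref{lemma:degreelemma}). The equivariant Mayer--Vietoris sequence, compared with its ordinary counterpart via $\rho$ and the restriction to $(X \# Y)^\Ga$, then simultaneously yields the vanishing of $H^*(X \# Y)$ outside degrees divisible by $2D$ and a Leray--Hirsch section $\si$ for $X \# Y$ assembled out of $\si_X$ and $\si_Y$.

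Finally, the degree condition \emph{(DC)} for $\si$ is inherited class-by-class: every cohomology generator of $X \# Y$ comes, via Mayer--Vietoris, from a class on $X$ or $Y$ (or is the common top class coming from both), and the restriction equation for this class in $H^*_\Ga((X \# Y)^\Ga)$ reduces to the restriction equation on the corresponding side. The main obstacle I anticipate is the overlap-compatibility on $S$, namely verifying that the equivariant extensions of generators from the two halves really do agree after pulling back to the gluing sphere. This reduces to the observation that on the linearized model $\nu_x X \iso \nu_y Y$ the restriction equation \eqref{eq:restrictioneqn} is completely determined by the normal weights, so the matching-weights hypothesis makes the two sides agree automatically; once this is in hand, the rest of the verification mirrors the conjugation-space case word for word.
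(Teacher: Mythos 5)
Your proposal is correct and follows exactly the route the paper intends: the paper gives no separate proof but explicitly reduces the statement to \cite[Proposition 4.7]{HausmannHolmPuppe2005}, noting that the only modification needed is that circle manifolds are locally isomorphic at a fixed point precisely when their normal weights agree, which is the role your weight hypothesis plays in forming the equivariant connected sum. The subsequent Mayer--Vietoris comparison and the class-by-class verification of (DC) are the same as in the conjugation-space case, so nothing further is required.
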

This construction provides examples of circle manifolds which are not homogeneous manifolds.

\begin{remark}
	No complex projective variety $X$ can be a circle space; $H^2(X)$ contains the non-zero hyperplane section, which violates the condition of having nonzero cohomology groups only in degrees $4i$.
\end{remark}
\begin{remark}
	If the  homogeneous space $X=G/H$ is a circle space, then $\rk(G)=\rk(H)$. Indeed, it is classical (e.g.\ \cite{GuilleminHolmZara2006}), that the Euler-characteristic of a homogeneous space is zero if $\rk(G)>\rk(H)$, which means that $X$ has nonzero cohomology in some odd degree, again violating the condition on even degrees. Indeed, all of the homogeneous examples above satisfy this condition.
\end{remark}

\section{Applications for enumerative problems}\label{sec:applications}
One of the main applications of the cohomology ring structure of real flag manifolds concerns enumerative geometry, namely Schubert calculus. Whereas in the complex case enumerative problems are completely solved by the cohomological product of the corresponding cycles, in the real case the
product only gives a lower bound---the number of solutions depends on the given configuration of the enumerative problem.

\subsection{Real Schubert problems}
There is no general theorem (as of yet), which gives all possible solutions to a real Schubert problem, although certain special cases have been described, see e.g.\ \cite{Sottile1997}, \cite{HeinHillarSottile2013}, \cite{FeherMatszangosz2016}.

The interpretation of the cohomology ring given in Corollary \ref{cor:realLR} is a general result providing lower bounds to an infinite family of real Schubert problems, what we can call \emph{double Schubert problems}: which involves only the double Schubert varieties $\si_{DI}^\R$ defined in Section \ref{subsec:realflags}. The details and several examples, can be found in \cite[Section 7]{realflagpaper}. Let us demonstrate this technique on a simple example:

 \begin{problem}\label{prob:real} How many $W\in\Gr_8(\R^{16})$ intersect four generic $U_i\in\Gr_8(\R^{16})$ in 4 dimensions?	\end{problem}
This problem can be rewritten as computing the intersection of four Schubert varieties $\bigcap_{i=1}^4 \si_\la(U_i)$, where $\la=(4^4)$. Each point of intersection inherits a sign from the orientations, therefore the cohomological product $[\si_\la^\R]^4\in H^*(\Gr_8(\R^{16}))$ gives a lower bound to the number of solutions. By Corollary \ref{cor:realLR}, this cohomological product can be computed as $[\si_\mu^\C]^4\in H^*(\Gr_4(\C^{8}))$ where $\mu=(2^2)$, and a simple verification shows that this product equals 6. In \cite{FeherMatszangosz2016}, we showed via elementary techniques that this lower bound is sharp, furthermore that the number of possible solutions to this problem is $6,14,30,70$. However, the cohomological lower bound works more generally for any double Schubert problem, which is a significantly larger family than the one described in \cite{FeherMatszangosz2016}.

\subsection{Quaternionic Schubert problems}
We can also solve quaternionic enumerative problems:

 \begin{proposition}\label{prop:quaternionicSchubert}
 	The number of solutions of a generic quaternionic Schubert problem is the same as the corresponding complex Schubert problem.
 \end{proposition}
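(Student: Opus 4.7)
The plan is to reduce the claim to two classical ingredients---Kleiman transversality in characteristic zero, and the positivity of intersections between quaternionic-linear subvarieties---and then identify the resulting count with the complex Schubert number via the circle-space structure established in Theorem \ref{thm:quatflagcirclespace}. Concretely, fix generic flags $F^{(1)}_\bullet,\dots,F^{(m)}_\bullet\in\Fl(\HH^n)$ and Schubert data $I_1,\dots,I_m\in\OSP(\D)$ with $\sum|I_j|=d:=\dim_\C\Fl_\D(\C^n)$, so that the real codimensions add up to $4d=\dim_\R\Fl_\D(\HH^n)$.

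First I would verify the cohomological identity
\[
\int_{\Fl_\D(\HH^n)}[\si^\HH_{I_1}]\cdots[\si^\HH_{I_m}] \;=\; \int_{\Fl_\D(\C^n)}[\si^\C_{I_1}]\cdots[\si^\C_{I_m}].
\]
This is immediate from the circle-space structure: by multiplicativity of $\ka$ (Theorem \ref{thm:multiplicativity}) and the formula $\ka[\si^\HH_I]=2^{|I|}[\si^\C_I]$ of Theorem \ref{thm:quatflagcirclespace} one has $\ka\bigl([\si^\HH_{I_1}]\cdots[\si^\HH_{I_m}]\bigr)=2^{d}\,[\si^\C_{I_1}]\cdots[\si^\C_{I_m}]$, and the same factor $2^d$ relates the top classes of the two flag manifolds (namely $\ka[\mathrm{pt}]_\HH=2^d[\mathrm{pt}]_\C$), so the scalar coefficients agree.

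Next I would carry out the geometric count. Since $\Sp(n)$ acts transitively on $\Fl_\D(\HH^n)$ and the Schubert varieties are closures of $B$-orbits for the quaternionic Borel subgroup $B\le\Sp(n)$, Kleiman's transversality theorem---valid in characteristic zero---implies that for a generic choice of the flags the varieties $\si^\HH_{I_j}(F^{(j)}_\bullet)$ meet transversally in finitely many smooth points. It then remains to show that every intersection point contributes $+1$ to the signed pairing. The point is that $\Fl_\D(\HH^n)$ together with the smooth locus of each $\si^\HH_{I_j}$ is a quaternionic manifold: its tangent spaces are $\HH$-submodules of Hom-spaces between tautological $\HH$-bundles, and the Schubert loci cut out $\HH$-submodules thereof. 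Equipping everything with the canonical orientation from its $\HH$-module structure, the general fact that an $\HH$-module direct-sum decomposition of an oriented $\HH$-module is orientation-compatible (seen at once by restricting scalars to some fixed $\C\subset\HH$ and invoking the complex case) forces the local index at each transverse intersection point to be $+1$. Hence the geometric count agrees with the cohomological pairing, which by the previous step equals the complex Schubert number.

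The hard part will be the sign verification: one must check that the $\HH$-module structure on the tangent and normal bundles of the Schubert varieties is compatible with the cohomological coorientations implicit in the class $[\si^\HH_I]$, and that at a generic intersection point the splitting $T_p\Fl_\D(\HH^n)=\bigoplus_j \nu_p\si^\HH_{I_j}$ is genuinely an $\HH$-module direct sum rather than merely a real one. Once these quaternionic-linearity statements are in place, transversality and the circle-space computation already in hand do the rest.
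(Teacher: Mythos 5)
Your proposal is correct and follows essentially the same route as the paper: transversality of the Schubert varieties for generic flags, positivity of every local intersection index coming from the (complex, hence canonical) orientation of the tangent spaces, and the identification of the quaternionic intersection number with the complex one via $\ka$ — which is exactly the content of Corollary \ref{cor:quaternionicLR} that the paper invokes. Your extra care about the $2^d$ factor cancelling against $\ka[\mathrm{pt}]$ and about reducing the $\HH$-linear sign check to the complex case is a correct elaboration of what the paper states tersely.
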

\begin{proof} If the Schubert varieties $\si_j(F_\bullet^\HH)$ are transverse (generic), then since the tangent spaces are canonically oriented (the tangent bundle has a complex structure), the cohomology computation gives the exact number of solutions. We can conclude by the Littlewood-Richardson coefficients of Corollary \ref{cor:quaternionicLR}.
 \end{proof}

For instance:
\begin{problem}\label{prob:quat} How many lines intersect four given lines in $\HH P^3$?	\end{problem}

By Proposition \ref{prop:quaternionicSchubert}, the answer is the same as in the complex case, which is 2. Notice that by forgetting the $\Sp(1)$-structure and retaining only the real linear structure, we obtain the real problem discussed above, which has at least 6 solutions. These solutions are $\Sp(1)$-invariant, so they should be quaternionic. The resolution of this seeming contradiction lies in the subtle notion of genericity: the real problem obtained by forgetting the quaternionic structure is usually \emph{not real generic}, so these cases are not covered by Proposition \ref{prop:quaternionicSchubert}.

On this example, nongenericity can be seen explicitly as follows. To a given configuration $U_1\stb U_4\in \Gr_8(\R^{16})$, one can associate a real linear map $\varphi:U_1\to U_1$ with the property that the problem is real generic iff all eigenvalues of this map are distinct, and different from 0 and 1, see \cite[Remark 4.14]{FeherMatszangosz2016}. If the $U_i$ are complex ($\U(1)$-invariant), the corresponding map $\varphi:U_1\to U_1$ is complex linear. In case the eigenvalues of $\varphi$ as a complex map are distinct and contain no complex conjugate pairs, then the eigenvalues of $\varphi$ as a real map are also distinct, and the problem is also real generic. However, if the $U_i$ are quaternionic ($\Sp(1)$-invariant) then $\varphi$ is quaternionic linear. Quaternionic linear maps can be written in the form
      $$ \GL_n(\HH)=\left\{\left(\begin{array}{cc}  A& B\\ -\conj{B}&\conj{A}  \end{array}\right)\in \GL_{2n}(\C)\right\}$$
and their eigenvalues as a real linear map come in fours $(\la,\la,\conj{\la}, \conj{\la})$. Since these eigenvalues are not distinct, the problem is not real generic, and it is also not hard to see that it has infinitely many solutions (since these are in bijection with invariant subspaces of $\varphi$, \cite[Corollary 2.4]{FeherMatszangosz2016}).
\section{Further examples of halving cycles} \label{sec:further}
As we explained in the previous chapter, every halving cycle in an even real flag manifold can lead to a lower bound for a corresponding real enumerative question. In the case of conjugation spaces, there is a large class of examples of halving cycles, namely the complexified subvarieties. However, there is no trivial analogue of the complexification operation for $\U(1)$-actions, so it is not easy to find non-trivial examples of halving cycles in a circle space. For even flag manifolds we have the even Schubert varieties and more generally Richardson varieties.

Below we discuss a less obvious class of examples, obtained by using quivers. We also indicate the enumerative consequences.
\subsection{Universal degeneracy loci}
Let $X$ be an even real partial flag manifold with the $\Gamma$-action of Section \ref{subsec:realflags} with fixed point set the complex partial flag manifold $X^\Gamma$. Let $\ga\in \N^n$ be a dimension vector, and $\underline{E}=(E_1,E_2,\dots,E_n)$ be an $n$-tuple of real $\Ga$-equivariant vector bundles of rank $2\gamma_i$ over $X$ such that $E_i|_{X^\Ga}$ has the structure of a complex vector bundle on which $\Gamma$ acts by scalar multiplication. We can construct such bundles using the various tautological subbundles over the flag manifold. Notice that $\Gamma$ acts on the bundles $\Hom_\R(E_i,E_{i+1})$  via conjugation with fixed point set $\Hom_\C(E_i|_{X^\Gamma},E_{i+1}|_{X^\Gamma})$. Therefore the total space of the type $A_n$ real quiver bundle with dimension vector $2\ga=(2\ga_1\stb 2\ga_n)$
\[ Q_\R(\underline{E}):=\bigoplus_{i=1}^{n-1}\Hom_\R(E_i,E_{i+1})\]
is also a circle manifold and its fixed point set is the  total space of the type $A_n$ complex quiver bundle with dimension vector $\ga$
\[ Q_\C(\underline{E}):=\bigoplus_{i=1}^{n-1}\Hom_\C(E_i|_{X^\Gamma},E_{i+1}|_{X^\Gamma}).\]

Let $Z_{2m}$ be the orbit closure in the quiver representation space $\Rep^\R_{2\ga}$ corresponding to the module $2m$. Then we can associate to $Z_{2m}^\R$ a subset $Z_{2m}^\R(\underline{E})$ of $Q_\R(\underline{E})$, the union of '$Z_{2m}^\R$-points' in all fibers. The proof of  \cite[Theorem 5.3.6.]{thesis} implies that
\begin{proposition}\label{quiver-halving}
$Z_{2m}^\R(\underline{E})$ is a halving cycle with fixed point set $Z_{m}^\C(\underline{E}|_{X^\Gamma})$.
\end{proposition}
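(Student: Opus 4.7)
The plan is to verify the conditions of Definition \ref{def:halvingcycle} by combining a fiberwise analysis at the level of quiver representations with the $\Gamma$-equivariant bundle structure of $Q_\R(\underline{E}) \to X$. By construction, $Q_\R(\underline{E})$ is a $\Gamma$-equivariant real vector bundle whose fiber over $x$ is $\bigoplus_i \Hom_\R((E_i)_x,(E_{i+1})_x)\cong\Rep_{2\gamma}^\R$, and $Z_{2m}^\R(\underline{E})$ is the subset assembled fiberwise from the orbit closure $Z_{2m}^\R\subseteq \Rep^\R_{2\gamma}$. All naturality is $\Gamma$-equivariant, so the task reduces to a representation-theoretic statement on a single fiber plus a globalization.

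First, I would identify the fixed-point structure fiberwise. Over $x\in X^\Gamma$, the hypothesis that $E_i|_{X^\Gamma}$ is complex with $\Gamma$ acting by scalar multiplication gives a decomposition
\[
\Hom_\R((E_i)_x,(E_{i+1})_x)=\Hom_\C\oplus\Hom_{\conj{\C}}
\]
as $\Gamma$-representations, where the first summand has weight $0$ and the second has weight $2$. Thus $Q_\R(\underline{E})^\Gamma=Q_\C(\underline{E}|_{X^\Gamma})$ and, on a fiber, $(\Rep^\R_{2\gamma})^\Gamma=\Rep^\C_\gamma$. The orbit closure $Z_{2m}^\R$ is $\Gamma$-invariant because $\Gamma$ acts by algebra automorphisms on the module structure, and a real module of type $2m$ restricts to the $\Gamma$-fixed locus as a complex module of type $m$; hence $(Z_{2m}^\R)^\Gamma=Z_m^\C$. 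Globalizing over $X^\Gamma$ identifies the fixed-point set of $Z_{2m}^\R(\underline{E})$ with $Z_m^\C(\underline{E}|_{X^\Gamma})$.

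Next, I would verify codimension type $(2k,k)$ and produce the fat nonsingular sets. The standard quiver codimension formula (comparing real and complex Ext groups for $A_n$ quiver modules) yields $\codim_\R(Z_{2m}^\R\subseteq\Rep^\R_{2\gamma})=2\codim_\C(Z_m^\C\subseteq\Rep^\C_\gamma)$. Bundling this over $X$ gives the required codimension pair for $Z_{2m}^\R(\underline{E})\subseteq Q_\R(\underline{E})$ and its fixed locus. The open orbit in each fiber, bundled over $X$, supplies the $\Gamma$-invariant fat nonsingular subset $U$: its complement is a union of strictly smaller orbit closures (fiberwise), whose $\Gamma$-fixed parts match up with the complement of the open orbit of $Z_m^\C$ in a compatible way, so Example \ref{ex:stratifiedgood} applies and both $Z_{2m}^\R(\underline{E})$ and its fixed locus are topological subvarieties of the right codimensions with a unique top stratum.

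Finally, I would establish the cycle structure and the constancy of the excess multiplicity. The normal bundle of $U$ inside $Q_\R(\underline{E})$ carries a canonical $\Gamma$-invariant complex structure coming from the quiver representation theory on the open orbit, so $U$ is cooriented and the fundamental class extends across the higher-codimensional strata. For the excess multiplicity: at a smooth fixed point $z\in Z_m^\C(\underline{E}|_{X^\Gamma})$, all normal weights of $X^\Gamma\subseteq X$ are $2$ by the computation in Theorem \ref{thm:doubleflagcirclespace}, and the fiberwise normal weights of $\Hom_\C\subseteq\Hom_\R$ are also all $2$ by the $\Hom_{\conj{\C}}$ computation above. Consequently the excess bundle $\eta_z$ is a sum of weight-$2$ representations only, so $\mu_z$ equals a fixed power of $2$ independent of $z$. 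This verifies all the conditions for $Z_{2m}^\R(\underline{E})$ to be a halving cycle with fixed-point set $Z_m^\C(\underline{E}|_{X^\Gamma})$. The main obstacle is the compatibility of the orbit-closure stratifications under restriction to the $\Gamma$-fixed locus, i.e.\ ensuring that the top stratum of $(Z_{2m}^\R)^\Gamma$ is exactly the open orbit of $Z_m^\C$ and that no spurious higher-dimensional components appear; this matching is the content of the quoted argument in \cite[Theorem 5.3.6.]{thesis}.
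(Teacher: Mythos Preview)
Your outline correctly identifies most of the ingredients (fixed-point identification, codimension halving, constancy of excess multiplicity via the weight-$2$ computation), and these parts match the paper's setup. The substantive divergence is in how you establish the \emph{cycle} property of $Z_{2m}^\R(\underline{E})$, and here there is a gap.

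You assert that the normal bundle of the open orbit $U$ carries a canonical $\Gamma$-invariant complex structure ``coming from the quiver representation theory,'' and that the fundamental class then extends across the higher-codimensional strata. Neither claim is justified. For \emph{real} $A_n$-quiver orbits there is no evident complex structure on the normal bundle of a generic orbit (the identification $N_{O_m}\cong \Ext^1(m,m)$ is a real vector space here), so coorientability of $U$ is not automatic. More seriously, even granting coorientability of $U$, the extension step requires the singular locus $Z_{2m}^\R\setminus U$ to have real codimension at least $2$ in $Z_{2m}^\R$ (Proposition~\ref{prop:cycleexistenceuniqueness}); in the real category, adjacent orbit closures can be real-codimension~$1$, so this needs an argument specific to the \emph{doubled} module $2m$. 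You do not supply one.

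The paper's approach bypasses both difficulties: it invokes the orientability of the \emph{Reineke resolution} of $Z_{2m}^\R(\underline{E})$. Pushing forward the fundamental class from an orientable smooth resolution produces the required cycle without any codimension analysis of the boundary strata, and orientability of the resolution (an iterated Grassmann bundle with even-rank pieces, since the dimension vector is $2\gamma$) is what actually uses the doubling hypothesis. You also misidentify the ``main obstacle'': the compatibility of fixed-point stratifications is comparatively routine, whereas the cycle property is where the Reineke resolution is essential and where the reference to \cite[Theorem 5.3.6.]{thesis} is doing the real work.
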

The proof is based on the orientability of the Reineke resolution of $Z_{2m}^\R(\underline{E})$. These quiver loci are nontrivial examples of halving cycles in $Q_\R(\underline{E})$, however they are not directly related to enumerative problems.
\subsection{A degeneracy locus in the Grassmannian}
If we want to find new halving cycles in an even real partial flag manifold $X$ (and not in a bundle over $X$), then we need a section of $Z_{2m}^\R(\underline{E})$ with nice properties:

\begin{observation} \label{trans} Let $f:X\to Y$ be a $\Gamma$-equivariant map of smooth circle manifolds, and let $Z\subset Y$ be a halving cycle. If $f$ is transversal to $Z$ and $f|_{X^\Ga}:X^\Ga\to Y^\Ga$ is transversal to $Z^\Ga$, then $f^{-1}(Z)$ is also a halving cycle.
  \end{observation}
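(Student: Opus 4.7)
The plan is to verify that $W := f^{-1}(Z)$ satisfies the four clauses of Definition \ref{def:goodcycle} for a good $\Ga$-invariant cycle and has codimension type exactly $(2Dk, Dk)$ to match $Z$. That $W$ is $\Ga$-invariant with $W^\Ga = (f|_{X^\Ga})^{-1}(Z^\Ga)$ is immediate from the equivariance of $f$, so the real work is to pull back the stratified, coorientation, and excess-bundle data of $Z$ along $f$.

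First I would produce the fat nonsingular sets by pullback. Let $U \subset Z$ be a $\Ga$-invariant fat nonsingular set and $\Si = Z \setminus U$ of cohomological codimension at least $2Dk + 1$ in $Y$. Transversality of $f$ to $U$ makes $f^{-1}(U)$ a smooth $\Ga$-invariant submanifold of $X$ of codimension $2Dk$, and transversality of $f|_{X^\Ga}$ to $U^\Ga$ makes $f^{-1}(U)^\Ga = (f|_{X^\Ga})^{-1}(U^\Ga)$ a smooth submanifold of $X^\Ga$ of codimension $Dk$. The remaining condition for $W$ to be a topological subvariety is that $\codim_R f^{-1}(\Si) \geq 2Dk + 1$ in $X$ and $\codim_R (f^{-1}\Si)^\Ga \geq Dk + 1$ in $X^\Ga$; this is the main technical point, discussed below.

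Granting this codimension bound, the rest proceeds formally. The fundamental class $\braaa U \kettt \in H^{2Dk}(Y \setminus \Si, Y \setminus Z)$ pulls back along $f$ to a fundamental class of $f^{-1}(U)$ in $X \setminus f^{-1}(\Si)$; Proposition \ref{prop:cycleexistenceuniqueness} then extends it uniquely to $\braaa W \kettt \in H^{2Dk}(X, X \setminus W)$, and similarly for $W^\Ga$. For the excess multiplicity, pick $x \in f^{-1}(U)^\Ga$. The differential $df : T_x X \to T_{f(x)} Y$ is $\Ga$-equivariant, and by the two transversality hypotheses it induces an isomorphism $\nu_x(f^{-1}(U) \subset X) \cong \nu_{f(x)}(U \subset Y)$ of $\Ga$-representations which carries the image of $T_x X^\Ga$ onto the image of $T_{f(x)} Y^\Ga$. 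Passing to the further quotient by this image yields a $\Ga$-equivariant isomorphism of excess bundles $\eta_x \cong \eta'_{f(x)}$; hence $\mu_W(x) = \mu(\eta'_{f(x)}) = \mu_Z$ is constant on $W^\Ga_{\mathcal{R}}$, so $W$ is a good $\Ga$-invariant cycle with the same excess multiplicity as $Z$, and of codimension type $(2Dk, Dk)$ as required.

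The main obstacle, as indicated, is the codimension estimate for $f^{-1}(\Si)$. For this to hold under the hypothesis as stated one should interpret ``$f$ transversal to $Z$'' as transversality to each stratum of a Whitney stratification refining $U \amalg \Si$ (and similarly for $f|_{X^\Ga}$ and $Z^\Ga$); then $f^{-1}$ of each lower stratum is a submanifold of the expected codimension and the bound $\codim_R f^{-1}(\Si) \geq \codim_R \Si$ follows by a Mayer--Vietoris argument over the stratification. With this reading, the observation is a direct consequence of transversality and equivariance; if $f$ is only assumed transversal to the smooth part $U$, an extra hypothesis controlling $f^{-1}(\Si)$ would have to be added.
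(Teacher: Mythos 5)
Your argument is correct and, in fact, more detailed than what the paper provides: the paper offers no proof of Observation \ref{trans} at all. It merely follows the statement with a definition, declaring that ``$f$ is transversal to a topological subvariety $Z$'' means that $f$ is transversal to a fat nonsingular subset $U$ of $Z$ \emph{and} that $f^{-1}(U)$ is a fat nonsingular subset of $f^{-1}(Z)$. The second clause is exactly the codimension bound $\codim_R\bigl(f^{-1}(Z)\setminus f^{-1}(U)\bigr)\geq 2Dk+1$ that you correctly single out as the only nontrivial point; the paper simply absorbs it into the hypothesis, whereas you propose deriving it from transversality to the strata of a Whitney stratification. Both readings are legitimate; the paper's is weaker as a theorem (more is assumed) but matches how the observation is actually applied later, where the relevant degeneracy loci come with explicit stratifications and the fat-nonsingular-preimage condition is checked by hand. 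Your stratification reading buys a genuinely checkable transversality criterion, at the cost of requiring a Whitney stratification to exist, which the definition of topological subvariety does not supply. The remaining parts of your argument --- the pullback of the fundamental class of $U$ and its unique extension via Proposition \ref{prop:cycleexistenceuniqueness}, and the $\Ga$-equivariant isomorphism $\nu_x(f^{-1}(U)\subset X)\cong\nu_{f(x)}(U\subset Y)$ induced by $df$, which identifies the excess representations (both being the moving isotypic part of the normal representation) and hence gives $\mu_W=\mu_Z$ --- are exactly the content the observation is implicitly relying on, and they are sound.
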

We say that $f$ is transversal to a topological subvariety $Z$, if it is transversal to a fat  nonsingular subset $U$ of $Z$, and that $f^{-1}(U)$ is a fat nonsingular subset of $f^{-1}(Z)$.
\medskip

Suppose now that $\sigma:X\to Q_\R(\underline{E})$ is a $\Gamma$-equivariant section such that $\sigma|_{X^\Gamma}:X^\Gamma\to  Q_\C(\underline{E})$ is holomorphic and transversal to $Z_{m}^\C(\underline{E})$. If $\sigma$ is transversal to $Z_{2m}^\R(\underline{E})$, then by Observation \ref{trans}, the degeneracy locus $\sigma^{-1}(Z_{2m}^\R(\underline{E}))\subset X$ is a halving cycle.

Such $\Gamma$-equivariant maps are not easy to find. The task of finding holomorphic sections of  $Q_\C(\underline{E})$ is already quite involved, so we restrict ourselves to the example  of $X=\Gr_4(\R^8)$ with fixed point set $X^\Ga=\Gr_2(\C^4)$. Before defining these sections, let us introduce some natural subvarieties of the Grassmannian. Fix a linear map $\al:\C^4\to \C^4$ and consider the following subvariety of $\Gr_2(\C^4)$:
    $$ \Sigma_\al = \{ V: \dim V\cap \al(V)\geq 1\}.$$
By forgetting the complex structure, we obtain a real linear map $\al_\R:\R^8\to \R^8$, and can similarly define a subvariety of $\Gr_4(\R^8)$:
     $$  \Sigma_\al^\R = \{V: \dim V\cap \al_\R(V)\geq 2\}.$$
For appropriate maps $\al$, these subvarieties are halving cycles:
\begin{proposition}\label{prop:Zalpha}
	If $\al:\C^4\to\C^4$ is diagonalizable with 4 different non-real eigenvalues, containing no complex conjugate pairs, then $ \Sigma_\al^\R$ is a halving cycle, with fixed point set $ \Sigma_\al$.
\end{proposition}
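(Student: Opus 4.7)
The strategy is to realize $\Sigma_\al^\R$ as the pullback of the universal quiver degeneracy locus of Proposition~\ref{quiver-halving} under a $\Ga$-equivariant section, and then apply Observation~\ref{trans}. Let $S,Q$ be the tautological real rank-$4$ sub- and quotient bundles on $X=\Gr_4(\R^8)$. Both are $\Ga$-equivariant and their restrictions to $X^\Ga=\Gr_2(\C^4)$ carry the complex structure on which $\Ga$ acts by scalar multiplication, so the pair $\underline E=(S,Q)$ satisfies the hypotheses of the setup in Section~\ref{sec:further}. Define
\[
\sigma\colon X\longrightarrow \Hom_\R(S,Q),\qquad \sigma(V)\colon v\mapsto \al_\R(v)\bmod V.
\]
Since $\al$ is $\C$-linear, $\al_\R$ commutes with the $\U(1)$-action on $\R^8$, hence $\sigma$ is $\Ga$-equivariant. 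Because $\al_\R$ is invertible, $\dim\ker\sigma(V)=\dim(V\cap\al_\R V)$, so $\Sigma_\al^\R=\sigma^{-1}(Z_{2m}^\R(\underline E))$, where $Z_{2m}^\R$ denotes the orbit closure corresponding to the rank-$\le 2$ locus in $\Hom_\R(\R^4,\R^4)$. The analogous identification $\Sigma_\al=(\sigma|_{X^\Ga})^{-1}(Z_m^\C(\underline E|_{X^\Ga}))$ holds over $X^\Ga$ and, in particular, gives $(\Sigma_\al^\R)^\Ga=\Sigma_\al$.

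By Proposition~\ref{quiver-halving}, $Z_{2m}^\R(\underline E)\subset \Hom_\R(S,Q)$ is a halving cycle whose fixed point set is $Z_m^\C(\underline E|_{X^\Ga})$. By Observation~\ref{trans}, it then suffices to verify two transversality statements: first, that $\sigma|_{X^\Ga}$ is transversal to a fat nonsingular subset of $Z_m^\C(\underline E|_{X^\Ga})$; second, that $\sigma$ is transversal to a fat nonsingular subset of $Z_{2m}^\R(\underline E)$. In both cases the normal bundle to the smooth rank-$r$ stratum at a map $\phi$ is canonically $\Hom(\ker\phi,\coker\phi)$, so the task reduces to showing that the derivative $d_V\sigma\colon T_VX\to \Hom_\R(V,\R^8/V)$ surjects onto this normal direction at each point of the expected codimension.

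The only place where the eigenvalue hypothesis enters, and the principal difficulty in the proof, is the transversality check. Over $X^\Ga$, working in an eigenbasis of $\al$, the distinctness of the four complex eigenvalues forces $\Sigma_\al$ to be a smooth irreducible divisor on its open stratum, and a short local computation shows that a variation of $V$ moving $V\cap \al V$ out of $V$ produces a nonzero normal component of $d_V(\sigma|_{X^\Ga})$. Over $X$, the assumption that no $\la_i$ is real and no two are complex conjugates means that $\al_\R\otimes\C$ has eight distinct eigenvalues and $\al_\R$ admits no invariant real line; this rules out the exotic real $4$-planes $V$ for which $\dim(V\cap\al_\R V)\geq 2$ would be forced by the underlying real structure rather than by a genuine codimension-$4$ condition, and the corresponding calculation in an eigenbasis of $\al_\R\otimes\C$ then shows that $d_V\sigma$ is surjective onto $\Hom(\ker\sigma(V),\coker\sigma(V))$ whenever this kernel has dimension exactly $2$. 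Granted these transversality statements, Observation~\ref{trans} immediately yields that $\Sigma_\al^\R$ is a halving cycle with fixed point set $\Sigma_\al$.
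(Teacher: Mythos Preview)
Your strategy is exactly the one the paper itself mentions and then deliberately sets aside: realize $\Sigma_\al^\R$ as $\sigma^{-1}(\bar\Sigma^2)$ for the quiver section $\sigma$ and invoke Observation~\ref{trans}. The paper confirms this works, writing that ``the section $\sigma_\al$ is indeed transversal to $\bar\Sigma^2(E_1,E_2)$, however the proof is quite technical,'' and for that reason chooses a different argument. So your route is valid in principle, but your proposal does not carry out the part that makes it a proof. The phrases ``a short local computation shows'' and ``the corresponding calculation in an eigenbasis\dots then shows'' are precisely where the work lies; the real transversality statement---surjectivity of $d_V\sigma$ onto $\Hom(\ker\sigma(V),\coker\sigma(V))$ at \emph{every} $V$ with $\dim\ker\sigma(V)=2$---is not short, and you have also not addressed the second half of the definition of transversality used in Observation~\ref{trans}, namely that the preimage of the open stratum is a fat nonsingular set (i.e.\ that $\sigma^{-1}$ of the corank-$\ge 3$ locus has cohomological codimension at least $6$).

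The paper's proof takes a genuinely different route that bypasses transversality altogether. It stratifies $\Sigma_\al^\R$ directly: setting $V'=V\cap\al(V)$ and $V''=V'\cap\al(V')$, the open stratum $Z_0=\{\dim V'=2,\ \dim V''=0\}$ is identified with an open subset of $\Gr_2(\R^8)$ via $W\mapsto\langle W,\al(W)\rangle$, hence is an orientable (and coorientable) fat nonsingular set. The only codimension-one stratum is $Z_1=\{\dim V'=2,\ \dim V''=1\}$, which they identify with an $\RP^4$-bundle over an open set in $\RP^7$; a Stiefel--Whitney class calculation shows $Z_1$ is non-orientable, and this is what allows the fundamental class of $Z_0$ to extend across $Z_1$. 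The eigenvalue hypotheses enter only to force the remaining strata $Z_2,Y_3,Y_4$ to have codimension $\ge 2$.

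In summary: your approach is the systematic one and, once the transversality is actually verified, generalizes more readily to other quiver loci; the paper's approach is more ad hoc but replaces an opaque differential-geometric check with a concrete topological computation ($w_1$ of an explicit bundle over $\RP^7$).
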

These subvarieties arise as degeneracy loci as follows. Let $E_1$ be the tautological subbundle and $E_2$ be the tautological quotient bundle over $X$. Then $E_1|_{X^\Gamma}$ is the tautological subbundle and $E_2|_{X^\Gamma}$ is the tautological  quotient bundle over $\Gr_2(\C^4)$. The map $\al$ induces a holomorphic section
\[\sigma^\C_\al:X^\Ga\to \Hom(E_1|_{X^\Gamma},E_2|_{X^\Gamma}),\]
where $\sigma^\C_\al(V,v)=[\al(v)]\in \C^4/V$ for $V\in \Gr_2(\C^4)$ and $v\in V$. Similarly, $\al_\R$ also induces a $\Ga$-equivariant section
\[\sigma_\al:X\to \Hom(E_1,E_2),\]
such that $\sigma_\al|_{X^\Gamma}=\sigma^\C_\al$. Then
\[ \Sigma_\al^\R=\bar\Si^2(\si_\al)\se \Gr_4(\R^8),\]
 where $\bar\Sigma^2(\sigma_\al)$ denotes the locus where the corank of $\sigma_\al$ is at least 2. Notice that $\bar\Sigma^2$ is of the form $Z_{2m}$ for an $A_2$ quiver representation. The section $\sigma_\al$ is indeed transversal to $\bar\Sigma^2(E_1,E_2)$, however the proof is quite technical, so instead we sketch a direct proof of the fact that $\Sigma_\al^\R$ is a halving cycle.
\begin{proof}[Proof (sketch)]
The key is to show that $\Sigma_\al^\R$ is a cycle. For this we give a stratification. For an arbitrary subspace $W\leq \R^8$, let us introduce the notation $W':=W\cap \al(W)$. We partition $Y=\bar\Sigma^2(\sigma_\al)$ according to the dimension of $V'$. We have  $Y=Y_2\coprod Y_3\coprod Y_4$, where
   \[ Y_i=\{V\in \Gr_4(\R^8):\dim(V')=i\}.  \]
In order to obtain a stratification, we further partition $Y_2$; by denoting $V''=V'\cap \al(V')$, let
\[ Z_i:=\{V\in Y_2:\dim(V'')=i\} \]
for $i=0,1,2$ is a partition of $Y_2$. The stratification we consider is given by
$$ \Sigma_\al^\R=Z_0\coprod Z_1\coprod Z_2\coprod Y_3\coprod Y_4.$$
For a generic 2-dimensional subspace $W$ we have $W'=0$, so the assignment $W\mapsto \langle W,\al(W)\rangle$ identifies $Z_0$ with an open submanifold of $\Gr_2(\R^8)$: this is the open---and orientable---stratum of $Y_2$. If the eigenvalues of $\al$ satisfy the conditions of Proposition \ref{prop:Zalpha}, then the codimensions of $Z_2$, $Y_3$ and $Y_4$ are greater than 2, so we don't need to study them, and it is enough to concentrate on $Z_1$.

In the remaining part, we show that $Z_1$ is a one-codimensional non-orientable stratum, which implies that $\Sigma_\al^\R$ is a cycle. For $V\in Z_1$ we have $V''= \langle v\rangle$ for some $v\in V$; by definition $\al^{-1}(v)\in V'$. Similarly, $\al^{-2}(v)\in V$. Since $V$ is in $Z_1$, these 3 vectors have to be independent: $V$ has a basis of the form $\{v,\al^{-1}(v),\al^{-2}(v),z\}$ for some $z\in V$. The assignment $(v,z)\mapsto V$ identifies $Z_1$ with an $\R\PP^4$ bundle over an open submanifold of $\R\PP^7$: Let $U\subset \R\PP^7$ be the open subset over which the bundles $\gamma,\al^{-1}(\gamma),\al^{-2}(\gamma)$ are independent, then we have the quotient vector bundle
    \[ \xi=\R^8/\big(\gamma\oplus\al^{-1}(\gamma)\oplus\al^{-2}(\gamma)\big) \]
    over $U$ and
\[Z_1\cong \R\PP(\xi).\]
The dimension of $Z_1$ is 11, one less than the dimension of $Z_0$ ($=\dim \Gr_2(\R^8)$). Notice that the complement of $U$ in $\R\PP^7$ has codimension higher than one, so the orientability of $Z_1$ is equivalent of the vanishing of the first Stiefel-Whitney class of the virtual  vector bundle
\[\tilde\xi:= \R^8\ominus 3\gamma.\]
An elementary calculation shows that $w_1(\tilde \xi)$ is not zero, so we established that $\bar\Sigma^2(\sigma_\al)$ is a cycle.
\end{proof}

\subsection{An enumerative problem} Consider the following question: Given four generic linear maps $\alpha_i:\C^4\to\C^4$ what is the number of 2-dimensional subspaces $V$ such that $\dim(V\cap\alpha_i(V))=1$ for $i=1,\dots,4$?

The answer can be given by first calculating the cohomology class of the subvariety $\Sigma^\C_\al\subset \Gr_2(\C^4)$. Since this is a Thom-Porteous locus \cite{Porteous1971}, a short calculation using the Giambelli-Thom-Porteous formula gives that $[\Sigma^\C_\al\subset \Gr_2(\C^4)]=2c_1$, where $c_1$ is the first Chern class of the tautological bundle. Then we need to intersect four general translates, and the number of intersection points is
\[  \int_{\Gr_2(\C^4)}(2c_1)^4=32.\]
Now Corollary \ref{cor:realLR} gives the following:
\begin{proposition}\label{32} Given four generic linear map $\alpha_i:\R^8\to\R^8$  the number of 4-dimensional subspaces $V$ such that $\dim(V\cap\alpha_i(V))=2$ for $i=1,\dots,4$ is at least 32.
\end{proposition}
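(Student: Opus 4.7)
The plan is to bound the number of real solutions below by the absolute value of the signed count $\int_{\Gr_4(\R^8)}[\Sigma_{\alpha_1}^\R]\cdots[\Sigma_{\alpha_4}^\R]$. For sufficiently generic $\alpha$ the cohomology class $[\Sigma_\alpha^\R]\in H^4(\Gr_4(\R^8);\Q)$ is independent of $\alpha$, so it suffices to show that $\int_{\Gr_4(\R^8)}[\Sigma_\alpha^\R]^4=32$ for any convenient generic $\alpha$. I would take $\alpha$ complex-linear, satisfying the hypotheses of Proposition \ref{prop:Zalpha}, so that $\Sigma_\alpha^\R$ becomes a halving cycle in the circle space $\Gr_4(\R^8)$ and the cohomological machinery developed in the paper applies.

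With this choice, $\Sigma_\alpha^\R$ has codimension type $(4,2)$ and fixed-point set $\Sigma_\alpha\subset\Gr_2(\C^4)$. The excess bundle has real rank $2$ and, as used in the proof of Theorem \ref{thm:doubleflagcirclespace}, every normal weight of $\Gr_2(\C^4)\hookrightarrow\Gr_4(\R^8)$ equals $2$; hence the excess representation is a single weight-$2$ complex line and the excess multiplicity is $\mu_{\Sigma_\alpha^\R}=2$. Combining Theorem \ref{prop:halvingcycle} with the Thom--Porteous calculation $[\Sigma_\alpha]=2c_1$ recalled just before the proposition gives $\kappa[\Sigma_\alpha^\R]=4c_1$. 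By multiplicativity of $\kappa$ (Theorem \ref{thm:multiplicativity}),
\[ \kappa\bigl([\Sigma_\alpha^\R]^4\bigr)=(4c_1)^4=256\,c_1^4=512\,[\mathrm{pt}]_\C\in H^8(\Gr_2(\C^4);\Q),\]
using the standard Schubert calculation $\int_{\Gr_2(\C^4)}c_1^4=2$.

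To convert this back to an integration on the real Grassmannian, I would identify the top class $[\mathrm{pt}]_\R\in H^{16}(\Gr_4(\R^8);\Q)$ with the double Schubert class $[\sigma^\R_{D(2,2)}]$; Theorem \ref{thm:doubleflagcirclespace} then gives $\kappa[\mathrm{pt}]_\R=2^{|(2,2)|}[\mathrm{pt}]_\C=16\,[\mathrm{pt}]_\C$. Writing $[\Sigma_\alpha^\R]^4=N\,[\mathrm{pt}]_\R$ and applying $\kappa$ forces $16N=512$, so $N=32$, which is exactly the claimed lower bound. The technical heart of the approach is Proposition \ref{prop:Zalpha}, which I take as given; the main subtlety in the plan itself is the bookkeeping of the factor $2^{\dim_\C X^\Ga}$ by which $\kappa$ fails to commute with integration, and this is resolved precisely by evaluating $\kappa$ on the point class via Theorem \ref{thm:doubleflagcirclespace}.
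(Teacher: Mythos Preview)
Your argument is correct and is essentially the paper's own approach made explicit. The paper simply computes $\int_{\Gr_2(\C^4)}(2c_1)^4=32$ on the complex side and then invokes Corollary~\ref{cor:realLR}; your careful tracking of the excess multiplicity $\mu_{\Sigma_\al^\R}=2$ and of $\kappa[\mathrm{pt}]_\R=16[\mathrm{pt}]_\C$ unpacks exactly why that invocation is legitimate, since $H^4(\Gr_4(\R^8);\Q)$ is spanned by the double Schubert class $[\sigma^\R_{D(1)}]$ and your computation identifies $[\Sigma_\al^\R]$ as $\pm 2[\sigma^\R_{D(1)}]$.
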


\subsection{Equivariant fundamental classes} \label{sec:quiver} Proposition \ref{quiver-halving} is a result for any choice of bundles $E_i$, so it can be translated to an equivariant statement. Note that the equivariant approach is in the spirit of \cite{BorelHaefliger1961} for which one of the main motivation was to establish the relationship between complex and real Thom polynomials mod 2. We will follow \cite{thesis}.

 The key technical point is the concept of a \emph{halving group}: a group $G$, on which $\Ga$ acts by automorphisms, such that the classifying space $BG$ is a halving space in a well-defined sense. It is showed in  \cite{thesis} that if $G$ is a halving group and $X$ is a halving space, then $B_GX$ is a halving space with fixed point set $B_{G^\Ga}X^\Ga$. Furthermore, if $Z\se X$ is a halving cycle which is $G$-invariant, then $\ka:H_{G}^*(X)\to H^*_{G^\Ga}(X^\Ga)$ maps $[Z]_G$ to $\la^i[Z^\Ga]_{G^\Ga}$. For instance, this result has the following application \cite[Theorem 5.3.6.]{thesis}:
\begin{theorem}\label{thm:realquiver}
	Let $Q$ be the equioriented $A_n$ quiver. Let $\ga\in\N^n$ be a dimension vector and $Z_m\se \Rep_\ga^\C$ be the closure of the $\GL_\ga^\C$-orbit corresponding to the module $m=\sum \mu_{ij} l_{ij}$, where $l_{ij},\ 1\leq i\leq j\leq n$ are the indecomposable modules corresponding to the positive roots. Then
	$$ [Z_{m}^\C\se \Rep^\C_{\ga}]_{\GL_\ga^\C}=q(c_*)\acsa [Z_{2m}^\R\se \Rep^\R_{2\ga}]_{\GL_{2\ga}^\R}=q(p_*)$$
	{in $H^*(BGL_{\ga}^\C;\Q)$ and $H^*(BGL_{2\ga}^\R;\Q)$ respectively} where $2\ga=(2\ga_1\stb 2\ga_r)$ and $2m=\sum 2\mu_{ij}l_{ij}$.
\end{theorem}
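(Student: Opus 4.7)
The plan is to deduce Theorem \ref{thm:realquiver} from an equivariant version of Proposition \ref{quiver-halving} combined with the explicit action of $\ka$ on Pontryagin classes. First I would pass to Borel constructions: since $\Rep^\R_{2\ga}$ is a linear (hence equivariantly contractible) representation of $\GL_{2\ga}^\R$, we have $H_{\GL_{2\ga}^\R}^*(\Rep^\R_{2\ga};\Q)\iso H^*(B\GL_{2\ga}^\R;\Q)\iso\Q[p_*]$, the polynomial ring in Pontryagin classes of the tautological bundles, and analogously on the complex side with Chern classes. The equivariant fundamental classes of the theorem live in these polynomial rings.

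Next I would invoke the halving group formalism sketched just before the theorem: $\GL_{2\ga}^\R$ equipped with the $\Ga=\U(1)$-action coming from $\R^{2\ga_i}\iso\C^{\ga_i}$ is a halving group with fixed-point subgroup $\GL_\ga^\C$, so $B\GL_{2\ga}^\R$ is a circle space with fixed-point set $B\GL_\ga^\C$; by the splitting principle applied to the computation of Example \ref{ex:realGrassmannian}, the associated cohomology frame acts on characteristic classes as $\ka(p_j)=2^j c_j$. Applying the equivariant version of Proposition \ref{quiver-halving} to the universal quiver bundle $Q_\R(\underline E)\to B\GL_{2\ga}^\R$ (with $\underline E$ the tuple of tautological bundles) shows that $Z_{2m}^\R(\underline E)$ is a halving cycle whose class recovers $[Z_{2m}^\R\se\Rep^\R_{2\ga}]_{\GL_{2\ga}^\R}$ and whose fixed set represents $[Z_m^\C\se\Rep^\C_\ga]_{\GL_\ga^\C}$. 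All normal weights of $\Rep^\C_\ga\inj\Rep^\R_{2\ga}$ are $2$ (by the decomposition $\Hom_\R(\C,\C)=\Hom_\C\oplus\Hom_{\conj\C}$ of weights $0,2$, as in Theorem \ref{thm:doubleflagcirclespace}), so the excess bundle at any smooth $\Ga$-fixed point of $Z_{2m}^\R$ has pure weight $2$ and real rank $2|m|$ (writing $|m|:=\codim_\C Z_m^\C$), yielding constant excess multiplicity $\mu_{Z_{2m}^\R}=2^{|m|}$. In particular $Z_{2m}^\R$ is a good invariant cycle, and Theorem \ref{prop:halvingcycle} gives $\ka[Z_{2m}^\R]_{\GL_{2\ga}^\R}=2^{|m|}[Z_m^\C]_{\GL_\ga^\C}$.

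To conclude, assuming $[Z_m^\C]_{\GL_\ga^\C}=q(c_*)$ for some polynomial $q$ weighted-homogeneous of cohomological degree $2|m|$, set $Y:=q(p_*)\in H^{4|m|}(B\GL_{2\ga}^\R;\Q)$. Multiplicativity of $\ka$ (Theorem \ref{thm:multiplicativity}) and weighted homogeneity yield
$$\ka(Y)=q(2c_1,4c_2,\ldots)=2^{|m|}q(c_*)=\ka[Z_{2m}^\R]_{\GL_{2\ga}^\R},$$
and injectivity of $\ka$ (Lemma \ref{lemma:injectivity}, extended to $B\GL_{2\ga}^\R$ via the direct limit stability of halving spaces, Proposition \ref{prop:directlimit}) gives $[Z_{2m}^\R]_{\GL_{2\ga}^\R}=q(p_*)$; the reverse implication is immediate since $2^{|m|}$ is invertible and $\ka$ is an isomorphism in the relevant degrees. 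The main obstacle is not the computation above but the careful setup of the halving group formalism and equivariant Borel-Haefliger theorem on the infinite-dimensional classifying space $B\GL_{2\ga}^\R$, together with the verification that $Z_{2m}^\R$ is a \emph{good} invariant cycle---i.e.\ that its singular locus is sufficiently small and its excess multiplicity is uniform; for the equioriented $A_n$ quiver these points are handled via the $\Ga$-equivariant Reineke resolution, as carried out in \cite{thesis}.
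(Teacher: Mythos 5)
Your proposal follows essentially the same route as the paper, which itself only sketches the argument: translate Proposition \ref{quiver-halving} to the universal setting via the halving-group formalism (so that $B\GL_{2\ga}^\R$ becomes a circle space with fixed-point set $B\GL_\ga^\C$ and $\ka(p_j)=2^jc_j$), establish that $Z_{2m}^\R$ is a good invariant halving cycle via the orientability of the Reineke resolution, and match the excess multiplicity $2^{|m|}$ against the factor produced by substituting $2^jc_j$ for $p_j$ in the weighted-homogeneous polynomial $q$. Your bookkeeping of the weights and multiplicities is correct, and you rightly identify that the substantive technical content (the halving-group setup on the classifying space and the goodness of the quiver cycle) is exactly what the paper defers to \cite{thesis}.
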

The real orbit closures $Z_{2m}^\R$ are defined analogously. It is also possible to define the subvarieties $Z_{m}^\C$ and $Z_{2m}^\R$ using rank conditions, see e.g.\ \cite[Lemma 4.1]{FeherRimanyi2002quiver}.

The proof is essentially the same as of Proposition \ref{quiver-halving}. A similar description can be given for matrix Schubert varieties, \cite[Theorem 5.4.3.]{thesis}.

\begin{remark} One of the main motivations for  Borel and Haefliger was to show that the Thom polynomial of a real singularity can be obtained by replacing Chern classes by Stiefel-Whitney classes in the Thom polynomial of the complexified singularity (\cite[Theorem 6.2.]{BorelHaefliger1961}). In \cite{rongaint} Ronga showed that a similar connection can be established for the Thom polynomials of real $\Sigma^{2i}(2a,2b)$ and complex $\Sigma^{i}(a,b)$ singularities: the Pontryagin classes should be replaced by corresponding Chern classes. Theorem \ref{thm:realquiver} is a generalization of this result. For a topological analogue of Borel and Haefliger's theorem on singularity classes, where complex conjugation is replaced by $\U(1)$-actions, see \cite[Theorem 3.2.13.]{thesis} -- this is the main ingredient for proving Theorem \ref{thm:realquiver}.
\end{remark}
\bibliographystyle{plain}
\bibliography{biblio}
\end{document}